\documentclass[twoside,11pt]{article}

\usepackage{mathtools}
\usepackage{latexsym}
\usepackage{mathrsfs}
\usepackage{amssymb,bm}
\usepackage{amsmath}
\usepackage{amsfonts}
\usepackage{amsthm}
\usepackage{enumitem}
\usepackage{hyperref}
\usepackage{epigraph}

\hypersetup{
  unicode=true,
  colorlinks=true,
  linkcolor=blue,
  citecolor=blue,
  pdftitle={The Complex Spectral Flow and Two-Parameter Equivariant Bifurcation},
}

\providecommand{\U}[1]{\protect\rule{.1in}{.1in}}
\def\bc{{\mathbb{C}}}

\def\bn{{\mathbb{N}}}
\def\br{{\mathbb{R}}}

\def\bz{{\mathbb{Z}}}
\def\bt{{\mathbb{T}}}

\def\gdeg{G\text{\rm -deg}}
\def\vs{\vskip.3cm}
\def\noi{\noindent}
\DeclareMathOperator{\id}{Id}
\def\s1deg{S^1\text{\rm -deg}}
\def\Om{\Omega}
\def\Gammadeg{\Gamma \text{\rm -deg}}

\usepackage{xcolor}
\usepackage{listings}

\definecolor{codegray}{rgb}{0.5,0.5,0.5}
\definecolor{Magenta}{rgb}{1,  0,  1}
\definecolor{codegray}{rgb}{0.5,0.5,0.5}
\definecolor{RoyalPurple}{rgb}{.25,  .10,  1}
\definecolor{NavyBlue}{rgb}{.06,  .46,  1}
\definecolor{cadmiumgreen}{rgb}{0.0, 0.42, 0.24}
\definecolor{Black}{rgb}{0,  0,  0}

\lstdefinestyle{mystyle}{
    commentstyle=\color{codegreen},
    keywordstyle=\color{magenta},
    numberstyle=\tiny\color{codegray},
    stringstyle=\color{RoyalPurple},
    basicstyle=\ttfamily\footnotesize,
    breakatwhitespace=false,         
    breaklines=true,                 
    captionpos=b,                    
    keepspaces=true,                 
    numbers=left,                    
    numbersep=5pt,                  
    showspaces=false,                
    showstringspaces=false,
    showtabs=false,                  
    tabsize=2
}
  \lstdefinelanguage{GAP}{
    basicstyle=\ttfamily,
    keywords={true, false, function, return, fail, if, in, while, do, od, else, elif, fi, break, continue},
    keywordstyle=\color{NavyBlue}\bfseries,
    otherkeywords={
      >, <, ==
    },
    breaklines=true,      
    identifierstyle=\color{Black},
    sensitive=True,
    comment=[l]{\#},
    commentstyle=\color{cadmiumgreen},
    stringstyle=\color{black},
    morestring=[b]',
    morestring=[b]"
  }
\newtheorem{theorem}{Theorem}[section]

\newtheorem{lemma}[theorem]{Lemma}
\newtheorem{corollary}[theorem]{Corollary}
\newtheorem{definition}[theorem]{Definition}
\newtheorem{remark}[theorem]{Remark}

\begin{document}

\title{The Complex Spectral Flow: Spectral Conditions for Two-Parameter Equivariant Bifurcation Guarantees} 

\author{
Ziad Ghanem\thanks{\small Department of Mathematics, Brandeis University, Waltham, MA 02453, USA
}}
\date{}

\maketitle

\begin{abstract}
We introduce the \emph{complex equivariant spectral flow}---a virtual $G$-representation assembling the eigenvalue winding numbers of the linearization at an isolated two-parameter 
critical point for $G = S^1 \times \Gamma$ bifurcation problems---and prove that, for maximal twisted orbit types, the coefficient of the local bifurcation invariant in $A_1^t(G)$ reduces to a closed-form dimension formula, 
bypassing the standard pipeline of basic degree factorization and Burnside ring multiplication entirely.
When the eigenvalue dependence is holomorphic, topological cancellation among winding numbers is impossible, yielding unconditional local and global bifurcation guarantees.
As applications, we establish the macroscopic escape of symmetric Hopf branches in $\Gamma$-equivariant systems and of patterned relative equilibria for the complex Ginzburg--Landau equation directly from spectral data.
\end{abstract}

\noi \textbf{Mathematics Subject Classification:} Primary: 37G40, 47H11; Secondary: 34C25, 55M25

\medskip

\noi \textbf{Key Words and Phrases:} equivariant degree, spectral flow, two-parameter bifurcation, complex representations, generalized kernel, winding number.

\setlength{\epigraphwidth}{0.54\textwidth}
\epigraph{Among the words which have had this happy result I will mention the group and the invariant.}{Henri Poincaré}

\section{Introduction} \label{sec:intro}

Bifurcations are, at their core, spectral events.  Given a one-parameter family of equations $\mathscr{F}(\alpha,u) = 0$ admitting a trivial solution branch $u \equiv 0$, the emergence of nontrivial solutions is governed by the spectrum of the linearization $\mathscr{A}(\alpha) := D_u\mathscr{F}(\alpha,0)$.  In the case that $\mathscr{A}(\alpha)$ is a compact perturbation of the identity, the Leray--Schauder degree \cite{LeraySchauder} reduces the detection of eigenvalue crossings to a parity count: the degree of $\mathscr{A}(\alpha)$ on a small ball around the origin equals $(-1)^{n_-(\alpha)}$, where $n_-(\alpha)$ counts the negative eigenvalues. Krasnosel'skii's bifurcation theorem~\cite{Krasnoselskii} guarantees the emergence of a nontrivial solution branch whenever $n_-$ changes parity across an isolated critical value. Global continuation is then provided by the Rabinowitz alternative~\cite{Rabinowitz}: any bifurcating branch either extends to infinity or returns to the trivial branch at another critical value.

\vs

When the governing equations are equivariant under a compact Lie group $G$, the negative eigenspace of the linearization $E^-(\alpha)$ inherits the $G$-action and decomposes into irreducible $G$-representations. The \emph{$G$-equivariant primary degree} of Geba, Krawcewicz, and Wu \cite{Geba1994} (see also \cite{AED, book-new, survey}) promotes the Leray--Schauder degree from a single integer to an element of the Burnside ring $A(G) := \bz[\Phi_0(G)]$, whose generators are indexed by the orbit types $(H) \in \Phi_0(G)$ with finite Weyl groups. The coefficient of each generator $(H)$ in the $G$-equivariant degree of the linearization is computed through a recurrence formula involving $(-1)^{\dim \mathcal{V}_i^H}$ for each irreducible $G$-representation $\mathcal{V}_i$ appearing in the negative eigenspace, and is assembled from  the corresponding \emph{basic degrees} $\deg_{\mathcal{V}_i} := G\text{-deg}(-\id, B(\mathcal{V}_i)) \in A(G)$. This framework resolves solution symmetries for all orbit types in $\Phi_0(G)$, providing substantial information when $G$ has a rich lattice of subgroups with finite Weyl groups---as is the case for finite groups, orthogonal groups, and their direct products. Orbit types with positive-dimensional Weyl groups, however, are invisible to the equivariant primary degrees. For example, in the case of $G = S^1$, one has $\Phi_0(S^1) = \{(S^1)\}$ and $A(S^1) \cong \bz$ such that the $S^1$-equivariant primary degree reduces to the classical Leray--Schauder degree, providing no resolution of the cyclic orbit types $(\bz_k)$ that characterize branches with distinct symmetries. 
\vs
The passage to \emph{two-parameter} bifurcation fundamentally changes both the underlying topology and the spectral datum that governs bifurcation. With one parameter, the boundary of an isolating interval is a pair of points, and the bifurcation invariant detects a \emph{sign change} between them. With two parameters, the boundary of an isolating disk $B_\varepsilon(\lambda_0) \subset \br^2$ is a circle, and the relevant invariant is a \emph{winding number}: the number of times an eigenvalue $\mu(\lambda)$ winds around the origin as $\lambda$ traces the boundary circle. The topological shift has a natural spectral counterpart. The datum governing two-parameter bifurcation is no longer the negative eigenspace, but the \emph{generalized kernel} of the linearization at the critical point, and the topological contributions are measured by algebraic multiplicities rather than eigenvalue parities. The natural degree-theoretic tool for this setting is the $S^1$-equivariant degree, introduced by Dylawerski, G\c{e}ba, Jodel, and Marzantowicz~\cite{Dylawerski1991, Dylawerski} and studied independently by Ize, Massab\`o, 
and Vignoli~\cite{Ize1989, Ize1992} (see also the Fuller index~\cite{Fuller1967}). This degree takes values in the free $\bz$-module $A_1(S^1) := \bz[\{(\bz_k)\}_{k \geq 1}]$, 
whose generators are the cyclic orbit types 
not contained in the corresponding Burnside ring.

\vs
In applications to symmetric Hopf bifurcation, periodic solutions of equivariant systems, and nonlinear wave equations, the symmetry group 
often has the product form $G = S^1 \times \Gamma$. The $S^1$ factor may arise for different reasons depending on the problem: as the temporal phase shift of periodic orbits (as in Section~\ref{sec:application}), as the gauge symmetry of a complex amplitude equation (as in Section~\ref{sec:CGL_application}), or as a rotational symmetry of the spatial domain; $\Gamma$ typically encodes the remaining spatial or network symmetries. The \emph{twisted $G$-equivariant degree} of Balanov, Krawcewicz, and Steinlein \cite{AED, book-new, survey} evaluates the topological charge of these problems, taking values in the $A(\Gamma)$-module $A_1^t(G)$ generated by the twisted orbit types. Historically, computing the twisted equivariant degree has relied on factorizing the degree into a Burnside ring component (for the stationary spectrum) and a twisted module component (for the non-stationary part), proceeding through a pipeline of basic degrees, the Splitting Lemma, and $A(\Gamma)$-module multiplication~\cite{AED, book-new, survey}. This framework has been applied extensively to symmetric Hopf bifurcation and nonlinear wave equations \cite{Erbe1992, KrawcewiczWuBook, FuraRatajczakRybicki,BalanovKrawcewiczRuan2006, GaGhKr}. The present paper shows that for maximal orbit types, this entire algebraic pipeline is unnecessary: the coefficients of 
the local bifurcation invariant can be read off directly from the spectral data of the linearization via the complex equivariant spectral flow.
\vs
\noi\textit{The two-parameter setting and the complex spectral flow.}\;
Two-parameter bifurcation problems with $S^1$-symmetries admit a natural complexification. Consider a two-parameter family of completely continuous $G$-equivariant fields $\mathscr{F}: \br^2 \times \mathscr{H} \to \mathscr{H}$, where $G = S^1 \times \Gamma$ and $\mathscr{H}$ is an isometric Banach $G$-representation. For these product groups, every irreducible representation $\mathcal V_{m,j}$ is uniquely identified by an $S^1$-isotypic index $m \in \bn \cup \{0\}$ and a $\Gamma$-isotypic index $j \in \mathcal J$. Accordingly, the phase space admits the $G$-isotypic decomposition
\[
\mathscr{H} = \overline{\bigoplus_{m=0}^\infty \bigoplus_{j \in \mathcal J} \mathscr{H}_{m,j}},
\]
where each $G$-isotypic component $\mathscr{H}_{m,j}$ is modeled on a direct sum of copies of the irreducible $G$-representation $\mathcal V_{m,j}$. Identifying the parameter plane $\br^2$ with $\bc$ via $\lambda = \alpha + i\beta$, Schur's Lemma ensures that the linearization $\mathscr A(\lambda):=D_u \mathscr F(\lambda,0)$ decomposes block-diagonally as $\mathscr A(\lambda) = \bigoplus_{m,j} \mathscr{A}_{m,j}(\lambda)$. The blocks with $m=0$ constitute the \emph{stationary} linearization, while the modes with $m \ge 1$ are \emph{dynamic}. On each dynamic component, $S^1$ acts non-trivially, so the eigenvalues of the corresponding restriction $\sigma(\mathscr{A}_{m,j}(\lambda)) =\{\mu_{m,j,k}(\lambda)\}_k$ are complex-valued functions of $\lambda$.

\vs
A trivial solution $(\lambda_0,0) \in \bc \times \mathscr H$ is called a \emph{dynamic critical point} if the stationary blocks ($m = 0$) of the linearization remain invertible at $\lambda_0$ while a finite number of dynamic eigenvalues (those in the $m \geq 1$ 
blocks) vanish.  As $\lambda$ traces a small circle $\partial B_\varepsilon(\lambda_0)$ around the singularity, each critical eigenvalue traces a closed curve whose winding number around the origin defines a local Brouwer degree. We define the \emph{complex equivariant spectral flow representation} as a virtual $G$-representation in $RO(G)$:
\begin{equation*}
\mathbb{W}_{\lambda_0} := \sum_{m \geq 1} \sum_{j \in \mathcal{J}} \rho_{m,j}(\lambda_0)\, \mathcal{V}_{m,j},
\end{equation*}
where $\rho_{m,j}(\lambda_0) := \sum_k \deg(\mu_{m,j,k}, B_\varepsilon(\lambda_0))$ is the total winding number of the critical eigenvalues in the $(m,j)$-th $G$-isotypic component. Since eigenvalues with opposite winding numbers can cancel, the total winding number $\rho_{m,j}(\lambda_0)$ may be negative, so $\mathbb{W}_{\lambda_0}$ is properly a virtual representation.

\vs

\noi\textit{The main bifurcation results.}\;
At an isolated, dynamic critical point $(\lambda_0,0) \in \bc \times \mathscr H$, the stationary linearization $\bigoplus_{j} \mathscr{A}_{0,j}(\lambda_0)$ is an isomorphism. For \textbf{maximal} orbit types $(H)$ in the support of $\mathbb{W}_{\lambda_0}$, the recurrence formula for the twisted equivariant degree collapses and the coefficient in the local bifurcation invariant reduces to the closed-form expression
\begin{equation}\label{eq:universal_intro}
\operatorname{coeff}^H(\omega_G(\lambda_0)) = \operatorname{sign}\det\left(\bigoplus_{j \in \mathcal J} \mathscr{A}_{0,j}^H(\lambda_0)\right) \cdot \frac{\dim_\bc \mathbb{W}_{\lambda_0}^H}{|W(H)/S^1|},
\end{equation}
where $\dim_\bc \mathbb{W}_{\lambda_0}^H := \sum_{m=1}^\infty \sum_{j \in \mathcal J} \rho_{m,j}(\lambda_0) \dim_\bc \mathcal{V}_{m,j}^H$ is the \emph{formal complex dimension} of the $H$-fixed subspace. This coefficient is nonzero whenever the formal dimension is nonzero (Theorem~\ref{thm:generalized_guarantee}). The proof exploits maximality to collapse the recurrence formula for the twisted equivariant degree, identifying the cumulative $S^1$-degree directly with the formal dimension. No basic degree computation or Burnside ring multiplication is performed. 
\vs
\noindent\textit{Global bifurcation.}\;
Our local results extend to global continuation through a three-layer argument developed in 
Section~\ref{sec:unbounded_branches}. At the first layer, the equivariant Rabinowitz alternative (Theorem~\ref{thm:rab_alt}) guarantees that a bifurcating branch must exit any open, bounded $G$-invariant domain $\mathcal U \subset \bc \times \mathscr H$ whose boundary avoids the critical set, provided the aggregate invariant $\sum_i \omega_G(\lambda_i)$ is non-zero.
At the second layer, we translate this topological condition into spectral data by assembling the local spectral flows over $\mathcal U$ and weighting them by the stationary determinant signs on fixed-point spaces. For each twisted orbit type $(K)$ this induces a $K$-signed aggregate spectral flow $\mathbb W_{\mathcal U}(K)
:= \sum_{i} \operatorname{sign}\det_\br
\left( \bigoplus_{q\in\mathcal J}
\mathscr A_{0,q}^K(\lambda_i)
\right) \mathbb W_{\lambda_i}$. If $(H)$ is maximal in the representation formed as the direct sum of the supports of the local flow representations $\mathbb W_{\lambda_i}$ contained in $\mathcal U$ and the $H$-signed formal dimension $\dim_\bc \mathbb W_{\mathcal U}(H)^H$ is non-zero, then the aggregate invariant has a non-zero $(H)$-coefficient and macroscopic escape is guaranteed (Theorem~\ref{thm:macroscopic_escape}).
At the third layer, a spatio-temporal fixed-point reduction eliminates the stationary modes entirely, yielding the cleanest formulation: if $(H)$ is a maximal orbit type in the support of the \emph{aggregate virtual spectral flow} 
$\mathbb W_{\mathcal U} := \sum_i \mathbb W_{\lambda_i}$ and $\dim_\bc (\mathbb W_{\mathcal U})^H \neq 0$, then the $H$-symmetric branch is forced to exit $\mathcal U$ 
(Theorem~\ref{thm:spatio_temporal_macroscopic_escape}).
\vs
\noi\textit{Holomorphic specialization.}\;
For general (non-holomorphic) systems, the formal dimension can vanish due to cancellations among eigenvalues with opposing winding directions. However, when the eigenvalues depend holomorphically on $\lambda$, each winding number coincides with the order of vanishing of the corresponding eigenvalue at $\lambda_0$, which is strictly 
positive by the Argument Principle. Topological cancellation is therefore impossible, and the formal dimension is \emph{unconditionally} positive for every maximal orbit type 
in the generalized kernel. Consequently, the formula \eqref{eq:universal_intro} detects every maximal orbit type and guarantees the macroscopic escape of the associated solution branches (Corollary~\ref{cor:holomorphic_escape}; in the 
spatio-temporal setting, this simplifies further to Corollary~\ref{cor:spatio_temporal_holomorphic_escape}).
\vs

\noi\textit{Relationship to the existing literature.}\;
The present paper does not develop a new equivariant degree theory; the twisted $G$-equivariant degree of~\cite{AED, book-new, survey} and its axiomatic properties are used throughout. What is new is the identification of the complex spectral flow representation as the natural organizing object for two-parameter equivariant bifurcation, and the observation 
that the recurrence formula collapses to the explicit dimension formula~\eqref{eq:universal_intro} at maximal orbit types. The resulting coefficients agree with those produced by the standard basic degree pipeline, but the computation is reduced to a single spectral datum.
\vs
To the best of our knowledge, assembling complex eigenvalue winding numbers into a virtual group representation to evaluate 
local bifurcation invariants directly has not appeared in the bifurcation literature. The closest antecedents are the classical spectral flow of Atiyah, Patodi, and Singer~\cite{AtiyahPatodiSinger}, which 
assigns an integer to a one-parameter path of self-adjoint Fredholm operators by counting signed eigenvalue crossings through zero, and its $G$-equivariant generalization by Izydorek, Janczewska, and Waterstraat~\cite{Izydorek2021}, which promotes this integer to an element of $RO(G)$ and applies it to detect periodic solutions in $G$-equivariant Hamiltonian systems. Both frameworks, however, are intrinsically one-parameter and variational, tracking \emph{signed} crossings of real eigenvalues along a path, with cancellations between positive and negative crossings built into the formalism. By contrast, the complex spectral flow introduced here governs 
two-parameter, non-variational families, and the absence of topological cancellation in holomorphic systems provides unconditional bifurcation guarantees that have no analogue in 
the one-parameter setting.
\vs
\noi\textit{Organization.}\;
Section~\ref{sec:bif_framework} recalls the equivariant bifurcation framework for the group $G = S^1 \times \Gamma$ and defines the local bifurcation invariant via the twisted equivariant degree.
Section~\ref{sec:spectral_flow} introduces the complex spectral flow representation and establishes the main coefficient formula for maximal orbit types (Theorem~\ref{thm:generalized_guarantee}) together with its holomorphic specialization 
(Corollary~\ref{cor:holomorphic_universal_guarantee}). We then develop the global theory: a spectral criterion for macroscopic escape (Theorem~\ref{thm:macroscopic_escape}), its 
holomorphic strengthening (Corollary~\ref{cor:holomorphic_escape}), and a 
spatio-temporal fixed-point reduction yielding a pure dimension criterion for the global bifurcation of genuinely time-dependent orbits 
(Theorem~\ref{thm:spatio_temporal_macroscopic_escape}).
Section~\ref{sec:application} applies the framework to symmetric Hopf bifurcation in a parameterized $\Gamma$-equivariant ODE, comparing the standard basic-degree pipeline with the spectral flow approach.
Section~\ref{sec:CGL_application} applies the holomorphic specialization to the complex Ginzburg--Landau equation on compact manifolds, establishing universal macroscopic escape 
guarantees and recovering the spiral wave results of~\cite{GGK_NODEA} without explicit Jacobian verification.
Appendix~\ref{sec:appendix_S1} provides a self-contained account of the $S^1$-equivariant degree, and Appendix~\ref{sec:appendix_twisted_deg} develops the twisted equivariant degree and its computational formulae.

\section{The Equivariant Bifurcation Framework} \label{sec:bif_framework}
Throughout this paper, let 
$\Gamma$ be a compact Lie group and put $G:= S^1 \times \Gamma$. The irreducible representations of $S^1$ are the trivial representation $\mathcal U_0 \simeq \br$ and for each $m \in \bn$ the complex representation $\mathcal U_m \simeq \bc$ equipped with the $m$-folded $S^1$-action
\[
\varphi z := e^{im \varphi} \cdot z, \quad \varphi \in S^1, \; z \in \mathcal U_m.
\]
Given a complete list of the irreducible $\Gamma$-representations $\operatorname{Irr}(\Gamma) = \{ \mathcal V_j\}_{j \in \mathcal J}$, the irreducible $G$-representations can be uniquely specified (up to equivalence) with a $G$-isotypic index $(m,j) \in \bn \cup \{0\} \times \mathcal J$ using the notation
\[
\mathcal V_{m,j} := \mathcal U_m \otimes \mathcal V_j.
\]
Let $\mathscr H$ be an isometric Banach $G$-representation with the $G$-isotypic decomposition
\begin{equation}\label{eq:isotypic_decomp_H}
    \mathscr{H} = \overline{\bigoplus_{m=0}^\infty \bigoplus_{j \in \mathcal{J}} \mathscr{H}_{m,j}}, \quad \mathscr H_{m,j} \simeq \ell_{m,j} \mathcal V_{m,j},
\end{equation}
and consider the two-parameter bifurcation problem 
\begin{align}\label{eq:bif_problem}
    \mathscr{F}(\alpha,\beta, u) = 0, \quad (\alpha,\beta, u) \in \br \times \br \times \mathscr{H},
\end{align}
where $\mathscr{F}: \br \times \br \times \mathscr{H} \to \mathscr{H}$ is a completely continuous $G$-equivariant field satisfying $\mathscr{F}(\alpha,\beta, 0) = 0$ for all $(\alpha,\beta) \in \br \times \br$ and is continuously Fréchet differentiable in a neighborhood of the trivial branch, admitting a linearization $\mathscr{A}(\alpha,\beta) := D_u \mathscr{F}(\alpha,\beta, 0)$ that depends continuously on the bifurcation parameters $(\alpha,\beta) \in \br \times \br$.
\vs
The set of all solutions to \eqref{eq:bif_problem} can be divided into the set of {\it trivial solutions} $M := \{ (\alpha,\beta,0) \in \br \times \br \times \mathscr H \}$
and the set of {\it non-trivial solutions}
\[
\mathscr S := \{ (\alpha,\beta,u) \in \br \times \br \times \mathscr H: \mathscr  F(\alpha,\beta,u) = 0, \; u \neq 0 \}.
\]
Given the $G$-symmetry of our bifurcation problem, we are particularly interested in non-trivial solutions with prescribed symmetries. For any closed subgroup $H \leq G$, we can always consider the $H$-fixed-point set $\mathscr S^H := \{ (\alpha,\beta,u) \in \mathscr S : G_u \geq H \}$ consisting of all non-trivial solutions to \eqref{eq:bif_problem} with {\it symmetries at least $(H) \in \Phi(G;\mathscr H)$}.
\vs
For simplicity of notation, we identify $\br \times \br$ with the complex plane $\bc$ by associating each pair of parameters $(\alpha,\beta) \in \br \times \br$ with the complex number $\lambda := \alpha + i \beta$. A trivial solution $(\lambda_0,0) \in M$ is said to be a {\it bifurcation point} for \eqref{eq:bif_problem} if every open neighborhood of the point $(\lambda_0,0)$ has a non-empty intersection with the set of non-trivial solutions $\mathscr S$. A critical point $(\lambda_0,0) \in M$ is a trivial solution at which $\mathscr A(\lambda_0): \mathscr H \rightarrow \mathscr H$ is {\it not} an isomorphism, and the {\it critical set} for \eqref{eq:bif_problem} is 
\begin{align*} 
    \Lambda := \{ (\lambda,0) \in \br^2 \times \mathscr H : \mathscr A(\lambda): \mathscr H \rightarrow \mathscr H \text{ is not an isomorphism}\}.   
\end{align*}
A critical point $(\lambda_0,0) \in \Lambda$ is said to be \textit{isolated} if there exists $\varepsilon > 0$ with $(\overline{B_{\varepsilon} (\lambda_0)} \times \{0\})\cap \Lambda = \{ (\lambda_0,0) \}$. An isolated critical point $(\lambda_0,0) \in \Lambda$ is said to be a {\it branching point} for \eqref{eq:bif_problem} if there exists a non-trivial continuum $K \subset \overline{ \mathscr S}$ with $K \cap M = \{ (\lambda_0,0) \}$ and the maximal connected component $\mathscr C \subset \overline{\mathscr S}$ containing $K$ is called the \textit{branch} of nontrivial solutions with branching point $(\lambda_0,0)$. 
\vs
Now, let $(\lambda_0,0) \in \Lambda$ be an isolated critical point with a deleted $\varepsilon$-neighborhood $B_{\varepsilon}(\lambda_0) \subset \bc$ on which $\mathscr A(\lambda): \mathscr H \rightarrow \mathscr H$ is an isomorphism, and choose $\delta > 0$ sufficiently small such that the equation $\mathscr{F}(\lambda, u) = 0$ admits no non-trivial solutions on the boundary of the isolating cylinder
\begin{equation*}
    \mathscr{O} := \{ (\lambda, u) \in \bc \times \mathscr{H} : |\lambda - \lambda_0| < \varepsilon, \; \|u\| < \delta \}.
\end{equation*}
A $G$-invariant function $\Theta: \bc \times \mathscr H \rightarrow \br$ is said to be an {\it auxiliary function} on $\mathscr O$ if it satisfies
\begin{align*} 
   \begin{cases}
\Theta(\lambda,0) < 0 \quad & \text{ for } |\lambda - \lambda_0 | = \varepsilon; \\
\Theta(\lambda,u) > 0  \quad & \text{ for } |\lambda - \lambda_0 | \leq \varepsilon \text{ and } \Vert u \Vert_{\mathscr H} = \delta.
\end{cases}    
\end{align*}
Given any auxiliary function $\Theta$, the {\it complemented operator}
\begin{align*} 
\mathscr F_\Theta : \bc \times \mathscr H \rightarrow \br \times \mathscr H, \quad   \mathscr F_\Theta(\lambda,u):= (\Theta(\lambda,u),\mathscr F(\lambda,u)),
\end{align*}
is an $\mathscr O$-admissible $G$-map. Since $\mathscr{F}$ is Fréchet differentiable at the origin with a derivative that depends continuously on $\lambda$, its nonlinear remainder is asymptotically negligible for small amplitudes, allowing the complemented operator $\mathscr F_\Theta$ to be $\mathscr O$-admissibly $G$-homotoped to the \emph{complemented linear operator} $\mathscr A_\Theta(\lambda,u):= (\Theta(\lambda,u),\mathscr A(\lambda,u))$.
\begin{lemma}\label{lem:linearization_homotopy}
Let $(\lambda_0,0) \in \Lambda$ be an isolated critical point and let $\Theta$ be an auxiliary function on $\mathscr{O}$. For $\varepsilon > 0$ and $\delta > 0$ sufficiently small, the complemented operator $\mathscr{F}_\Theta$ is $\mathscr{O}$-admissibly $G$-homotopic to the complemented linear operator 
\[
\mathscr{A}_\Theta(\lambda,u) := (\Theta(\lambda,u), \mathscr{A}(\lambda)u).
\]
\end{lemma}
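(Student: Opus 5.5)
We use the straight-line homotopy
\[
\mathscr H_t(\lambda,u) := \bigl(\Theta(\lambda,u),\; (1-t)\mathscr F(\lambda,u) + t\,\mathscr A(\lambda)u\bigr), \qquad t \in [0,1],
\]
and show that it is an $\mathscr O$-admissible $G$-homotopy.

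\textbf{Structural properties.} Each $\mathscr H_t$ is $G$-equivariant, because $\Theta$ is $G$-invariant and both $\mathscr F(\lambda,\cdot)$ and $\mathscr A(\lambda)$ are $G$-equivariant. Write $\mathscr F(\lambda,u) = u - f(\lambda,u)$ with $f$ completely continuous. Then $\mathscr A(\lambda) = \id - D_uf(\lambda,0)$, and $D_uf(\lambda,0)$ is compact and depends continuously on $\lambda$. Hence the second component of $\mathscr H_t$ is again a completely continuous field, jointly in $(t,\lambda,u)$. It therefore suffices to check that $\mathscr H_t$ has no zeros on $\partial\mathscr O$ for any $t \in [0,1]$.

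\textbf{Admissibility.} The boundary $\partial\mathscr O$ has two pieces.
\begin{itemize}
\item On the lateral piece $\{|\lambda-\lambda_0|=\varepsilon,\ \|u\|\le\delta\}$, suppose first $u=0$. Then $\Theta(\lambda,0)<0$, so the first component is nonzero.
\item On the top piece $\{\|u\|=\delta\}$, $\Theta>0$, so again the first component is nonzero.
\end{itemize}
The only remaining case is $|\lambda-\lambda_0|=\varepsilon$ with $0<\|u\|\le\delta$. There the first component gives no information, and we need
\[
(1-t)\mathscr F(\lambda,u)+t\mathscr A(\lambda)u \neq 0 .
\]
Write $\mathscr F(\lambda,u) = \mathscr A(\lambda)u + r(\lambda,u)$. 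The expression then equals $\mathscr A(\lambda)u + (1-t)r(\lambda,u)$.

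\textbf{Key estimate (the main step).} On the compact circle $\partial B_\varepsilon(\lambda_0)$, each $\mathscr A(\lambda)$ is an isomorphism, because the critical point is isolated. Since $\lambda\mapsto\mathscr A(\lambda)$ is continuous in operator norm and inversion is continuous, we get
\[
c := \min_{|\lambda-\lambda_0|=\varepsilon}\|\mathscr A(\lambda)^{-1}\|^{-1} > 0,
\qquad\text{so}\qquad \|\mathscr A(\lambda)u\|\ge c\|u\| \text{ on that circle.}
\]
Next we need the remainder to be uniformly small relative to $\|u\|$ on the circle:
\[
\|r(\lambda,u)\| = o(\|u\|) \quad\text{uniformly in } |\lambda-\lambda_0|=\varepsilon .
\]
This holds because $\mathscr F$ is $C^1$ near the trivial branch. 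By the mean value inequality,
\[
\|r(\lambda,u)\|\le \sup_{s\in[0,1]}\|D_u\mathscr F(\lambda,su)-D_u\mathscr F(\lambda,0)\|\,\|u\|,
\]
and the supremum tends to $0$ as $\|u\|\to0$. The convergence is uniform over the compact circle, by continuity of $D_u\mathscr F$ on a neighborhood of the compact set $\partial B_\varepsilon(\lambda_0)\times\{0\}$ together with a Lebesgue-number argument.

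We fix $\varepsilon$ first, small enough that the disk isolates $\lambda_0$. We then shrink $\delta$ so that $\|r(\lambda,u)\|\le \tfrac c2\|u\|$ for $\|u\|\le\delta$ on the circle. This gives
\[
\|\mathscr A(\lambda)u+(1-t)r(\lambda,u)\|\ge \tfrac c2\|u\|>0 .
\]
Shrinking $\delta$ may require re-choosing $\Theta$. This is harmless: for instance $\Theta(\lambda,u)=\|u\|-\delta/2$ adjusted near the circle, or any function meeting the sign conditions for the new $\delta$, works, since the degree is independent of the choice of auxiliary function. With this, the homotopy $\mathscr H_t$ is $\mathscr O$-admissible, and $\mathscr F_\Theta$ is $\mathscr O$-admissibly $G$-homotopic to $\mathscr A_\Theta$.

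The delicate point is the uniformity of the $o(\|u\|)$ estimate in $\lambda$. It uses compactness of the circle and continuity of $D_u\mathscr F$ jointly in $(\lambda,u)$, not merely Fréchet differentiability at each $\lambda$.
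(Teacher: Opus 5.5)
Your proof is correct and follows essentially the same route as the paper: a straight-line homotopy between $\mathscr F_\Theta$ and $\mathscr A_\Theta$, the sign conditions on $\Theta$ to handle the cases $\|u\|=\delta$ and $u=0$, and, on the circle $|\lambda-\lambda_0|=\varepsilon$, the lower bound $\|\mathscr A(\lambda)u\|\ge c\|u\|$ set against a remainder estimate $\|r(\lambda,u)\|\le \tfrac{c}{2}\|u\|$ that is made uniform by continuity of $D_u\mathscr F$ and compactness. Your extra checks of equivariance and complete continuity of the homotopy fill in points the paper leaves implicit; your caveat about re-choosing $\Theta$ is harmless, because the estimate uses only the sign conditions on $\Theta$ and therefore works for any auxiliary function once $\varepsilon$ and $\delta$ are fixed.
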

\begin{proof}
By the continuous $u$-Fréchet differentiability of $\mathscr F$ near the trivial branch, the map
$(\lambda,u)\longmapsto D_u\mathscr F(\lambda,u)$ is continuous. Hence, after fixing $\varepsilon>0$ sufficiently small, the compactness of $\overline{B_\varepsilon(\lambda_0)}\times\{0\}$ implies that for every $\eta>0$ there exists $\delta>0$ such that $\|D_u\mathscr F(\lambda,tu)-D_u\mathscr F(\lambda,0)\|<\eta$
for all $\lambda\in\overline{B_\varepsilon(\lambda_0)}$, $\|u\|\leq\delta$, and $t\in[0,1]$. Therefore, the integral remainder formula
\[
\mathscr R(\lambda,u)
:=\mathscr F(\lambda,u)-\mathscr A(\lambda)u
=
\int_0^1
\left[D_u\mathscr F(\lambda,tu)-D_u\mathscr F(\lambda,0)\right]u\,dt,
\]
implies $\|\mathscr R(\lambda,u)\|\leq \eta\|u\|$, i.e. $\mathscr R(\lambda,u)=o(\|u\|)$ uniformly for $\lambda\in\overline{B_\varepsilon(\lambda_0)}$.

We construct the straight-line $G$-homotopy $h_t(\lambda, u) := (\Theta(\lambda, u), \mathscr{A}(\lambda)u + t\mathscr{R}(\lambda,u))$ for $t \in [0,1]$. To prove $h_t$ is $\mathscr{O}$-admissible, we must show $h_t(\lambda,u) \neq 0$ on the boundary $\partial\mathscr{O}$, which decomposes into the lateral boundary $\overline{B_\varepsilon(\lambda_0)} \times \partial B_\delta(0)$ and the base boundary $\partial B_\varepsilon(\lambda_0) \times \overline{B_\delta(0)}$.

On the lateral boundary, one has $\|u\| = \delta$. By the definition of the auxiliary function, $\Theta(\lambda, u) > 0$. Thus, the first argument is non-zero, ensuring $h_t(\lambda, u) \neq 0$. On the base boundary, one has $|\lambda - \lambda_0| = \varepsilon$. If $u=0$, one has $\Theta(\lambda, 0) < 0$, ensuring $h_t(\lambda, 0) \neq 0$. If $u \neq 0$, we analyze the second argument. Since $\mathscr{A}(\lambda)$ is an isomorphism for all $\lambda \in \partial B_\varepsilon(\lambda_0)$ and the boundary circle is compact, the infimum is strictly positive:
\[
c_A := \inf_{\lambda \in \partial B_\varepsilon(\lambda_0)} \|\mathscr{A}(\lambda)^{-1}\|^{-1} > 0.
\]
Setting $\eta = c_A / 2$, the uniform bound established above guarantees that we can choose $\delta > 0$ such that $\|\mathscr{R}(\lambda,u)\| \le \frac{c_A}{2}\|u\|$ for all $\|u\| \le \delta$. Using the reverse triangle inequality on the second argument of $h_t$, we obtain:
\begin{align*}
    \|\mathscr{A}(\lambda)u + t\mathscr{R}(\lambda,u)\| &\ge \|\mathscr{A}(\lambda)u\| - t\|\mathscr{R}(\lambda, u)\| \\
    &\ge c_A\|u\| - \frac{c_A}{2}\|u\| \\
    &= \frac{c_A}{2}\|u\| > 0.
\end{align*}
Since the second component is strictly non-zero, $h_t(\lambda, u) \neq 0$ on the base boundary. The homotopy is therefore $\mathscr{O}$-admissible.
\end{proof}

By the homotopy invariance of the twisted equivariant degree (see \ref{t2}), Lemma~\ref{lem:linearization_homotopy} implies the coincidence $\gdeg(\mathscr{F}_\Theta, \mathscr{O}) = \gdeg(\mathscr{A}_\Theta, \mathscr{O})$ in $A_1^t(G)$. With this in mind, we define the \emph{local bifurcation invariant} at each isolated critical point $(\lambda_0,0) \in \Lambda$ as follows:
\begin{equation}\label{def:local_bifurcation_invariant}
    \omega_G(\lambda_0) := \gdeg(\mathscr{A}_\Theta, \mathscr{O}) \;\in A_1^t(G).
\end{equation}
The following standard Krasnosel'skii-type local bifurcation result (cf. \cite{book-new}, Theorem 6.8) serves as the main engine for our equivariant-degree based local bifurcation analysis. We include a sketch of the proof which highlights the role of the existence property \ref{t4} of the equivariant degree  for the convenience of readers more accustomed to the classical topological degree theory.
\begin{theorem}[Krasnosel'skii-type Local Bifurcation] \rm \label{thm:app_abstract_local_bif}
Suppose that $(\lambda_0,0) \in \Lambda$ is an isolated critical point for \eqref{eq:bif_problem}. If there is an orbit type $(H) \in \Phi_1^t(G)$ for which 
\[
\operatorname{coeff}^H(\omega_{G}(\lambda_0)) \neq 0,
\]
then there exists a branch $\mathscr C$ of non-trivial solutions to \eqref{eq:bif_problem} bifurcating from $(\lambda_0,0)$ with symmetries at least $(H)$, i.e. with $\mathscr C \subset \overline{\mathscr S^H}$.
\end{theorem}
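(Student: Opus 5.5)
The plan is the standard shrinking-neighborhood argument. The existence property \ref{t4} of the twisted degree is applied to a nested family of isolating cylinders around $(\lambda_0,0)$, and the resulting solutions are assembled into a continuum.

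\textbf{Step 1: solutions in every small cylinder.} Fix the isolating cylinder $\mathscr O$ and the auxiliary function $\Theta$. For each $n$, shrink $\varepsilon_n\to 0$ and $\delta_n\to 0$ so that Lemma~\ref{lem:linearization_homotopy} still applies on
\[
\mathscr O_n:=B_{\varepsilon_n}(\lambda_0)\times B_{\delta_n}(0),
\]
and take a matching auxiliary function $\Theta_n$. Here two facts are needed:
\begin{itemize}
\item by excision and homotopy invariance, $\gdeg(\mathscr F_{\Theta_n},\mathscr O_n)=\omega_G(\lambda_0)$, since $\omega_G(\lambda_0)$ does not depend on the admissible choice of $\varepsilon,\delta,\Theta$;
\item hence $\operatorname{coeff}^H\gdeg(\mathscr F_{\Theta_n},\mathscr O_n)\neq 0$.
\end{itemize}
The existence property \ref{t4} then gives $(\lambda_n,u_n)\in\mathscr O_n$ with $\mathscr F_{\Theta_n}(\lambda_n,u_n)=0$ and $G_{(\lambda_n,u_n)}\geq H$ up to conjugacy. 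Because $\Theta_n(\lambda,0)<0$ near the relevant region, I would arrange $\Theta_n$ (e.g.\ $\Theta_n(\lambda,u)=\|u\|-\tfrac{\delta_n}{2}$ type choices, modified near $|\lambda-\lambda_0|=\varepsilon_n$) so that its zeros force $u_n\neq 0$. Replacing $u_n$ by a $G$-translate gives $(\lambda_n,u_n)\in\mathscr S^H$ with $(\lambda_n,u_n)\to(\lambda_0,0)$. So $(\lambda_0,0)$ is a bifurcation point for $\mathscr S^H$.

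\textbf{Step 2: obtain a continuum.} Points alone do not give a branch; I need a connected set $K\subset\overline{\mathscr S^H}$ meeting $M$ only at $(\lambda_0,0)$. The plan is the classical Kuratowski/Whyburn separation argument, run inside the $H$-fixed-point space $\bc\times\mathscr H^H$ with the sphere $\{\|u\|=\delta\}$:
\begin{itemize}
\item Consider the compact set $\overline{\mathscr S^H}\cap\overline{\mathscr O}$. Compactness follows from complete continuity of $\mathscr F$, since solutions satisfy $u=u-\mathscr F(\lambda,u)$ with compact right-hand side.
\item Suppose no component containing $(\lambda_0,0)$ reaches the lateral boundary $\|u\|=\delta$.
\item Whyburn's lemma then yields an open $G$-invariant set $\mathscr U\subset\mathscr O$ containing the component, whose boundary misses the solution set.
\item Complementing on $\mathscr U$ with an auxiliary function adapted to $\partial\mathscr U$ gives a $G$-map with no zeros of orbit type $\geq(H)$ on the relevant part. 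Homotoping it to one with empty zero set in $\mathscr U^H$ forces the $(H)$-coefficient to vanish, by \ref{t4} and excision.
\item This contradicts the homotopy equality with $\omega_G(\lambda_0)$.
\end{itemize}
Hence there is a continuum $K$ from $(\lambda_0,0)$ to $\partial B_\delta$ inside $\overline{\mathscr S^H}$. Since $(\lambda_0,0)$ is isolated in $\Lambda\cap\overline{\mathscr O}$, and nontrivial solutions can only accumulate on $M$ at $\Lambda$, we get $K\cap M=\{(\lambda_0,0)\}$. The maximal component $\mathscr C\supset K$ is the branch.

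\textbf{Main obstacle.} The delicate step is the contradiction in Step 2. It needs an $\mathscr U$-admissible homotopy between the complemented map on $\mathscr O$ and one on $\mathscr U$, and the auxiliary function must be compatible with both boundaries. I would handle this as in \cite{book-new}, Theorem 6.8: choose $\Theta$ to be the signed distance-type function $\Theta(\lambda,u)=\operatorname{dist}((\lambda,u),\mathscr U')$-based, built from the separating set, then invoke excision, additivity, and homotopy invariance (\ref{t2}). If this becomes too technical, I would simply cite that theorem for the continuum part and keep the explicit argument for Step 1.
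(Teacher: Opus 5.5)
Your Step~1 is correct and coincides with the paper's sketch. $\omega_G(\lambda_0)$ does not depend on $\varepsilon,\delta,\Theta$; the choice $\Theta_n=\|u\|-\delta_n/2$ forces $u_n\neq 0$ (it needs no modification near the base circle); and $(T_4)$ plus a $G$-translate gives points of $\mathscr S^H$ accumulating at $(\lambda_0,0)$. The paper does not carry out the continuum step either. It defers to ``standard continuation arguments'' and \cite{book-new}, so your fallback of citing Theorem~6.8 there is exactly what the paper does.

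The explicit Step~2 you outline, however, would not go through.
\begin{itemize}
\item A Whyburn separation inside $\bc\times\mathscr H^H$ separates only the $H$-symmetric solutions. A $G$-invariant $\mathscr U$ built from it (e.g.\ its $G$-saturation) may therefore carry solutions of smaller isotropy on $\partial\mathscr U$. Then $\gdeg$ on $\mathscr U$ is undefined, and the excision comparison with $\omega_G(\lambda_0)$ on $\mathscr O$ also fails, since lower-symmetry zeros of the complemented map outside $\mathscr U$ are not excluded.
\item Retreating to the fixed-point space to regain admissibility does not help directly. The natural degree there, $\deg_{S^1}(\mathscr F_\Theta^H,\mathscr O^H)$, equals $\sum_{(L)\geq(H)}\operatorname{coeff}^L(\omega_G(\lambda_0))\,n(H,L)\,|W(L)/S^1|$ by \eqref{def:twisted_degree_recurrence}, rather than $\operatorname{coeff}^H(\omega_G(\lambda_0))\,|W(H)/S^1|$. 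It can vanish while $\operatorname{coeff}^H(\omega_G(\lambda_0))\neq 0$; this is the same cancellation the paper discusses for formal dimensions.
\end{itemize}

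Two changes repair the argument.
\begin{itemize}
\item Never change the domain. Keep the cylinder, whose boundary is admissible for \emph{all} solutions, and change only the auxiliary function.
\item Work on a fixed-point space where the degree is nonzero. Since $\omega_G(\lambda_0)$ has finite support, choose $(L_0)\geq(H)$ maximal with $\operatorname{coeff}^{L_0}(\omega_G(\lambda_0))\neq 0$. The recurrence then gives $\deg_{S^1}(\mathscr F_\Theta^{L_0},\mathscr O^{L_0})=\operatorname{coeff}^{L_0}(\omega_G(\lambda_0))\,|W(L_0)/S^1|\neq 0$.
\end{itemize}

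The separation argument then runs as follows.
\begin{itemize}
\item The set $X:=\overline{\mathscr S^{L_0}}\cap\overline{\mathscr O}$ is compact, meets $M$ at most in $(\lambda_0,0)$, and misses the base circle.
\item Suppose the component of $(\lambda_0,0)$ in $X$ missed $\{\|u\|=\delta\}$. Whyburn's lemma gives a clopen $X_1\subset X$ containing it, with $X_2:=X\setminus X_1\supseteq X\cap\{\|u\|=\delta\}$. Being a union of components, $X_1$ is $S^1$-invariant because $S^1$ is connected.
\item Take the disjoint closed sets $A:=X_1\cup(\overline{B_\varepsilon(\lambda_0)}\times\{0\})$ and $B:=X_2\cup\{\|u\|=\delta\}$. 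Then $\Theta_1:=\operatorname{dist}(\cdot,A)-\operatorname{dist}(\cdot,B)$ is an auxiliary function.
\item The segment from $\|u\|-\delta/2$ to $\Theta_1$ is $\mathscr O^{L_0}$-admissible. Both functions are positive on the lateral boundary and negative at the trivial zeros, which are the only zeros on the base boundary.
\item $(\Theta_1,\mathscr F^{L_0})$ has no zeros at all, so the cumulative degree vanishes, which is a contradiction.
\end{itemize}
Translating the resulting continuum by some $g$ with $gL_0g^{-1}\supseteq H$ places it in $\overline{\mathscr S^H}$. The branch must then be taken as the component of $\overline{\mathscr S^H}$ containing it, not the component of $\overline{\mathscr S}$.

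If you prefer to stay with the $G$-degree and $(T_4)$ as you intended, the same auxiliary-function swap on $\mathscr O$ does work. You would separate, by a $G$-invariant clopen piece, the compact set of solutions of orbit type $\geq(H)$. However, this only produces a continuum in $G\cdot\overline{\mathscr S^H}$, which may pass between conjugate fixed-point spaces.
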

\begin{proof}[Proof Sketch]
By the existence \ref{t4} and homotopy invariance \ref{t2} properties of the twisted equivariant degree, a non-zero coefficient standing next to $(H)$ in the module element \eqref{def:local_bifurcation_invariant} guarantees a zero for the complemented operator $\mathscr{F}_\Theta$ inside the isolating cylinder $\mathscr{O}$ with isotropy at least $(H)$. Since the parameters $\varepsilon$ and $\delta$ defining the isolating cylinder can be chosen to be arbitrarily small, there exist non-trivial solutions accumulating at $(\lambda_0, 0)$. By standard degree-theoretic continuation arguments (analogous to the Rabinowitz global alternative), the non-vanishing of the equivariant degree on these shrinking cylinders forces these zeros to form a connected continuum $\mathscr C$ originating at $(\lambda_0, 0)$. Finally, since the auxiliary function $\Theta$ is strictly positive on $\partial \mathscr{O}$ for any $u \neq 0$, the branch cannot exit the cylinder through the non-trivial boundary, making $\mathscr{C}$ a valid continuum of solutions to the original equation.
\end{proof}
Theorem~\ref{thm:app_abstract_local_bif} reduces the detection of bifurcation to the nonvanishing of the generator coefficients $\operatorname{coeff}^H(\omega_G(\lambda_0))$. In the standard computational framework, evaluating these coefficients requires factoring the local bifurcation invariant through a product of basic degrees and performing the $A(\Gamma)$-module multiplication---a procedure detailed in \cite{AED, book-new, survey}. 

\section{The Complex Spectral Flow} \label{sec:spectral_flow}
For maximal orbit types, the entire algebraic pipeline described above can be bypassed. 
The key is to package the spectral data of the linearization into a virtual $G$-representation in the real representation ring $RO(G)$ from which the coefficients of the local bifurcation invariant at maximal orbit types can be read off directly. We begin by recalling the relevant algebraic framework.
\vs
Let $RO(G)$ denote the real representation ring of $G$. An element $V \in RO(G)$ is a \emph{virtual $G$-representation}, which can be uniquely expressed as a formal finite linear combination of irreducible $G$-representations 
\begin{equation*}
V = \sum_{m=0}^\infty \sum_{j \in \mathcal J} n_{m,j} \mathcal{V}_{m,j}, \quad n_{m,j} \in \bz,
\end{equation*}
where $n_{m,j} = 0$ except for finitely many $G$-isotypic indices. If $n_{m,j} \geq 0$ for all $(m,j)$, then $V$ corresponds to an \emph{actual} (or physical) $G$-representation, which can be viewed geometrically as a direct sum of vector spaces, i.e. $V = \bigoplus_{m=0}^\infty \bigoplus_{j \in \mathcal J} n_{m,j} \mathcal{V}_{m,j}$. However, when coefficients are allowed to be negative, the geometric interpretation requires care. To study the symmetry properties of a virtual representation $V \in RO(G)$, we define its \emph{support} as the actual $G$-representation formed by the direct sum of all its active modes:
\begin{equation*}
\operatorname{supp}(V) := \bigoplus_{n_{m,j} \neq 0} \mathcal{V}_{m,j}.
\end{equation*}
This distinction necessitates a careful differentiation between geometric and algebraic orbit types. For an actual $G$-representation $W$, writing $(H) \in \Phi(G; W)$ implies that there exists a non-zero physical vector $u \in W$ whose isotropy group is exactly $H$. Consequently, the fixed-point space $W^H$ is guaranteed to have a strictly positive dimension. In contrast, for a virtual representation $V \in RO(G)$, we say $(H)$ is an orbit type of $V$ if it is a geometric orbit type of its support, i.e., $(H) \in \Phi(G; \operatorname{supp}(V))$. Keeping in mind that the fixed-point functor distributes linearly, we define the \emph{formal dimension} of the $H$-fixed subspace of $V$ as follows:
\begin{equation*}
\dim V^H := \sum_{m=0}^\infty \sum_{j \in \mathcal J} n_{m,j} \dim \mathcal{V}_{m,j}^H.
\end{equation*}
Notice that the coefficients $n_{m,j}$ can be negative, so
the formal dimension $\dim V^H$ may evaluate to a negative integer or exactly zero, even if $(H)$ is a valid orbit type in the support of $V$.
\subsection{The Local Spectral Flow and the Generalized Kernel}
Since $\mathscr{A}(\lambda) : \mathscr H \to \mathscr H$ is a compact perturbation of the identity, its spectrum consists of isolated eigenvalues of finite algebraic multiplicity. By Schur's Lemma, the $G$-equivariant linearization respects the $G$-isotypic decomposition \eqref{eq:isotypic_decomp_H}, admitting the block structure
\begin{equation*} 
\mathscr{A}(\lambda) = \bigoplus_{m=0}^\infty \bigoplus_{j \in \mathcal{J}} \mathscr{A}_{m,j}(\lambda), \quad \mathscr{A}_{m,j}(\lambda) := \mathscr{A}(\lambda)|_{\mathscr{H}_{m,j}}: \mathscr{H}_{m,j} \to \mathscr{H}_{m,j},
\end{equation*}
so the spectrum of $\mathscr A(\lambda)$ can  be identified as follows
\[
\sigma(\mathscr A(\lambda)) = \bigcup_{m = 0}^\infty \bigcup_{j \in \mathcal J} \sigma(\mathscr A_{m,j}(\lambda)), \quad \sigma(\mathscr A_{m,j}(\lambda)) = \{ \mu_{m,j,k}(\lambda)\}_{k \in \mathcal K_{m,j} \subset \bn}.
\]
At a critical parameter value $\lambda_0$, the generalized kernel $\ker_{gen}\mathscr{A}(\lambda_0)$ is a finite-dimensional $G$-invariant subspace of $\mathscr{H}$ with a $G$-isotypic decomposition of the form
\begin{equation*}
\ker\nolimits_{gen}\mathscr{A}(\lambda_0) \simeq \bigoplus_{m=0}^\infty \bigoplus_{j \in \mathcal J} d_{m,j}(\lambda_0) \mathcal{V}_{m,j},
\end{equation*}
where $d_{m,j}(\lambda_0)$ is the number of indices $k \in \mathcal K_{m,j}$ for which $\mu_{m,j,k}(\lambda_0)=0$.
\vs
An isolated critical point $(\lambda_0,0) \in \Lambda$ is said to be \emph{dynamic} if the restriction $\mathscr A(\lambda_0)|_{\bigoplus_{j \in \mathcal J} \mathscr H_{0,j}}:\bigoplus_{j \in \mathcal J}\mathscr H_{0,j} \to \bigoplus_{j \in \mathcal J}\mathscr H_{0,j}$ is an isomorphism and \emph{stationary}, otherwise.
For any \emph{dynamic} $G$-isotypic component ($m \ge 1$), the non-trivial $S^1$-action induces a natural complex structure. Thus, the restriction $\mathscr{A}(\lambda)|_{\mathscr{H}_{m,j}}$ possesses a finite set $\mathcal K_{m,j}(\lambda_0) \subset \mathcal K_{m,j}$ of parameter-dependent complex eigenvalues $\{\mu_{m,j,k}(\lambda)\}_{k \in \mathcal K_{m,j}(\lambda_0)}$ that vanish at $\lambda_0$. These dynamic eigenvalues cross the origin in the complex plane as the two-dimensional parameter varies, so their topological contribution is captured by their local winding numbers. We define the \emph{complex equivariant spectral flow representation} across the boundary $\partial B_\varepsilon(\lambda_0)$ as the formal sum of the dynamic modes in the representation ring $RO(G)$:
\begin{equation} \label{def:spectral_flow_rep}
\mathbb{W}_{\lambda_0} := \sum_{m=1}^\infty\sum_{j \in \mathcal J} \rho_{m,j}(\lambda_0) \mathcal{V}_{m,j}, \quad \text{where} \quad \rho_{m,j}(\lambda_0) := \sum_{k \in \mathcal K_{m,j}(\lambda_0)} \deg(\mu_{m,j,k}, B_\varepsilon(\lambda_0)).
\end{equation}
Here, the integer $\rho_{m,j}(\lambda_0) \in \bz$ is the \emph{total winding number} (local Brouwer degree) of the critical eigenvalues in the $(m,j)$-th $G$-isotypic component as the parameter traces the boundary circle $\partial B_\varepsilon(\lambda_0)$. These total winding numbers $\rho_{m,j}(\lambda_0)$ can be negative due to topological cancellations, so $\mathbb{W}_{\lambda_0}$ is properly understood as a virtual representation in $RO(G)$. 
\begin{remark}
The restriction to dynamic modes $m \geq 1$ in the definition of the complex spectral flow is structurally necessary. For $m=0$, if the $\Gamma$-representation $\mathcal V_j$
is of real type, the spectrum of $\mathscr A_{m,j}(\lambda_0)$ is symmetric with respect to complex conjugation. Consequently, the winding numbers of any complex conjugate pairs exactly cancel, and strictly real eigenvalues have zero winding number. Thus, the total winding number of a real stationary mode identically vanishes. The topological contribution of the $m=0$ modes is instead captured by the sign of the real determinant, which under an additional assumption factors out as a parity multiplier in the bifurcation invariant.
\end{remark}
In the presence of additional analytical structure---specifically, when the parameter dependence is holomorphic---the possibility of topological cancellation is eliminated, and the spectral flow representation is an actual $G$-representation whose support coincides with the support of the dynamic part of the generalized kernel.
\begin{lemma}\label{lem:holomorphic_kernel}
Assume that the critical eigenvalues of the dynamic $(m \geq 1)$ blocks
$\mathscr A_{m,j}(\lambda)$ depend holomorphically on $\lambda$ in a
neighborhood of the isolated critical point $\lambda_0$. Then the complex
spectral flow representation $\mathbb W_{\lambda_0}$ is an actual
$G$-representation, and its support coincides with the dynamic part of the
generalized kernel:
\[
\operatorname{supp}(\mathbb W_{\lambda_0})
=
\operatorname{supp}\left(
\bigoplus_{m=1}^{\infty}
\bigoplus_{j\in\mathcal J}
d_{m,j}(\lambda_0)\mathcal V_{m,j}
\right).
\]
\end{lemma}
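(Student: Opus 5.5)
The plan is to reduce everything to the Argument Principle applied eigenvalue by eigenvalue, and then sum within each dynamic isotypic component. Fix a dynamic index $(m,j)$ with $m\ge 1$ and a critical eigenvalue $\mu_{m,j,k}$, $k\in\mathcal K_{m,j}(\lambda_0)$. By hypothesis $\mu_{m,j,k}$ is holomorphic on a disk around $\lambda_0$ and vanishes at $\lambda_0$. Since $(\lambda_0,0)$ is an isolated critical point, $\mathscr A(\lambda)$ is an isomorphism on the punctured disk $\overline{B_\varepsilon(\lambda_0)}\setminus\{\lambda_0\}$. Hence $\mu_{m,j,k}(\lambda)\neq 0$ there, so $\mu_{m,j,k}$ is not identically zero and $\lambda_0$ is an isolated zero of some finite order $\nu_{m,j,k}\ge 1$.

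The local Brouwer degree of a holomorphic map at an isolated zero equals its order of vanishing. Concretely, write $\mu_{m,j,k}(\lambda)=(\lambda-\lambda_0)^{\nu}g(\lambda)$ with $g(\lambda_0)\neq0$. The winding number of $\mu_{m,j,k}$ along $\partial B_\varepsilon(\lambda_0)$ is then
\[
\frac{1}{2\pi i}\oint_{\partial B_\varepsilon(\lambda_0)}\frac{\mu_{m,j,k}'(\lambda)}{\mu_{m,j,k}(\lambda)}\,d\lambda=\nu_{m,j,k}.
\]
Equivalently, the map is orientation preserving with Jacobian $|\mu'|^2\ge0$, and a small perturbation $\mu-\eta$ has exactly $\nu$ regular preimages, each of index $+1$. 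Thus $\deg(\mu_{m,j,k},B_\varepsilon(\lambda_0))=\nu_{m,j,k}\ge1$.

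Summing over $k$ gives $\rho_{m,j}(\lambda_0)=\sum_{k\in\mathcal K_{m,j}(\lambda_0)}\nu_{m,j,k}\ge \#\mathcal K_{m,j}(\lambda_0)=d_{m,j}(\lambda_0)\ge0$. Every coefficient of $\mathbb W_{\lambda_0}$ is therefore nonnegative, so $\mathbb W_{\lambda_0}$ is an actual representation. Moreover, $\rho_{m,j}(\lambda_0)>0$ if and only if $\mathcal K_{m,j}(\lambda_0)\neq\emptyset$, that is, if and only if $d_{m,j}(\lambda_0)>0$. Since supports are defined by the set of nonvanishing coefficients, this gives the equality of supports in the statement.

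The main obstacle is bookkeeping rather than analysis. We must make sure the eigenvalues $\mu_{m,j,k}$ are well-defined single-valued functions on the whole closed disk, so that the winding number is defined. We must also make sure the $\varepsilon$ in \eqref{def:spectral_flow_rep} lies inside the holomorphy neighborhood. I would handle this by shrinking $\varepsilon$ at the outset; the winding number does not depend on $\varepsilon$ once no other zeros are enclosed. The holomorphy hypothesis is read as supplying single-valued holomorphic branches. If only the block were holomorphic, one would instead apply the Argument Principle to $\det$ of the finite-rank spectral projection block, whose zero order is the total algebraic multiplicity, and reach the same conclusion.
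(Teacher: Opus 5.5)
Your argument is correct and is essentially the paper's proof: the Argument Principle applied to each critical branch gives winding number equal to the order of vanishing, hence $\geq 1$, so each $\rho_{m,j}(\lambda_0)\geq 0$ with strict positivity exactly when $d_{m,j}(\lambda_0)>0$. Your extra bookkeeping (isolation of the critical point forcing a zero of finite order, shrinking $\varepsilon$ into the holomorphy disk) only makes explicit what the paper leaves implicit.

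One small correction to your closing aside on the case where only the block is holomorphic. There the vanishing order of $\det$ on the spectral block is not in general the algebraic multiplicity of $0$ in $\mathscr A(\lambda_0)$. A one-dimensional block $\mu(\lambda)=(\lambda-\lambda_0)^2$ has order $2$ but multiplicity $1$, while $\bigl(\begin{smallmatrix}0&1\\ \lambda-\lambda_0&0\end{smallmatrix}\bigr)$ has order $1$ but multiplicity $2$. The order is nonetheless always $\geq 1$, which is all that fallback needs.
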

\begin{proof}
For each critical eigenvalue branch $\mu_{m,j,k}(\lambda)$, holomorphy and
isolation of the zero at $\lambda_0$ imply that its winding number around the
origin is the order of vanishing of $\mu_{m,j,k}$ at $\lambda_0$. Hence
$\deg(\mu_{m,j,k},B_\varepsilon(\lambda_0))>0$
whenever $\mu_{m,j,k}(\lambda_0)=0$. Therefore, the total winding number $\rho_{m,j}(\lambda_0)
= \sum_{k\in\mathcal K_{m,j}(\lambda_0)}
\deg(\mu_{m,j,k},B_\varepsilon(\lambda_0))$ is a non-negative integer, and it is strictly positive exactly when the $(m,j)$-block contains a critical eigenvalue at $\lambda_0$.

But the latter condition is precisely the condition that the dynamic
generalized kernel has a nontrivial $(m,j)$-isotypic component, i.e.
$d_{m,j}(\lambda_0)>0$. Thus $\mathbb W_{\lambda_0}$ has only non-negative
coefficients and its active $G$-isotypic components are exactly those constituting the dynamic generalized kernel.
\end{proof}

\subsection{Computation of the Local Bifurcation Invariant}
The spectral flow representation $\mathbb{W}_{\lambda_0}$ encodes the net topological change in the dynamic spectrum.  We now show that, for maximal orbit types, this single object determines the coefficient of the local bifurcation invariant at isolated dynamic critical points---without recourse to basic degrees or Burnside module arithmetic.

\begin{theorem}\label{thm:generalized_guarantee}
Let $(\lambda_0,0) \in \Lambda$ be an isolated dynamic critical point. Suppose that $(H) \in \Phi_1^t(G)$ is a \textbf{maximal} orbit type in the support of the spectral flow representation $\mathbb{W}_{\lambda_0}$. If the formal complex dimension of the $H$-fixed subspace is non-zero, i.e.,
\begin{equation*} 
\dim_\bc \mathbb{W}_{\lambda_0}^H := \sum_{m=1}^\infty \sum_{j \in \mathcal{J}} \rho_{m,j}(\lambda_0) \dim_\bc \mathcal{V}_{m,j}^H \neq 0,
\end{equation*}
then the local bifurcation invariant $\omega_G(\lambda_0)$ admits a strictly non-zero coefficient for $(H)$, given explicitly by the formula:
\begin{equation} \label{eq:universal_formula}
\operatorname{coeff}^H(\omega_{G}(\lambda_0)) = \operatorname{sign}\det\left(\bigoplus_{j \in \mathcal J} \mathscr{A}_{0,j}^H(\lambda_0)\right) \cdot \sum_{m=1}^\infty \sum_{j \in \mathcal{J}} \rho_{m,j}(\lambda_0) \frac{\dim_\bc \mathcal{V}_{m,j}^H}{|W(H)/S^1|} \neq 0,
\end{equation}
where $\bigoplus_{j \in \mathcal J} \mathscr{A}_{0,j}^H(\lambda_0)$ denotes the restriction of the stationary linearization to the $H$-fixed point space $\bigoplus_{j \in \mathcal J}\mathscr H_{0,j}^H$.
\end{theorem}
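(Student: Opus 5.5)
The plan is to use the recurrence formula for the twisted equivariant degree from Appendix~\ref{sec:appendix_twisted_deg}. At a maximal orbit type $(H)$ it expresses the coefficient through the $S^1$-degree of the $H$-fixed-point restriction of the complemented map. Concretely, for $(H)$ maximal in $\Phi_1^t(G;\mathscr H)$, I expect
\[
\operatorname{coeff}^H(\omega_G(\lambda_0)) = \frac{\s1deg\big((\mathscr A_\Theta)^H, \mathscr O^H\big)}{|W(H)/S^1|},
\]
where $(\mathscr A_\Theta)^H$ denotes the restriction of $\mathscr A_\Theta$ to the fixed-point set $\mathscr O^H$. The reason is that every orbit type $(K) > (H)$ contributes nothing to the recurrence sum. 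Maximality is imposed in the support of $\mathbb W_{\lambda_0}$ and not in all of $\mathscr H$. So the first step is a reduction: use Lemma~\ref{lem:linearization_homotopy} and the fact that $\mathscr A(\lambda)$ is invertible on all isotypic components outside the critical ones. On those complementary blocks the map can be deformed $\mathscr O$-admissibly to the constant operator $\mathscr A(\lambda_0)$. By the suspension property, the degree is then carried by a finite-dimensional $G$-representation $E\oplus U$. Here $U$ is the dynamic generalized kernel, and $E$ consists of stationary modes, on which the operator equals the constant isomorphism $\bigoplus_j\mathscr A_{0,j}(\lambda_0)$. The dynamic orbit types that matter therefore lie in $\Phi(G;U)$. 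I would identify these with those of $\operatorname{supp}\mathbb W_{\lambda_0}$, discarding components with $\rho_{m,j}=0$ by a further homotopy on those blocks.

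Next I would compute the $S^1$-degree on the fixed-point space. Restricted to $H$, the map splits into the stationary part $\bigoplus_j\mathscr A^H_{0,j}(\lambda_0)$ on $\bigoplus_j\mathscr H_{0,j}^H$ and a dynamic part $(\Theta,\mathscr A(\lambda)|_{U^H})$. The $S^1$-action on $U^H$ is free off orbit types above $H$, and $W(H)/S^1$ is finite. By the multiplicativity, or suspension, property of the $S^1$-degree with respect to a constant isomorphism on the $S^1$-trivial part, the stationary factor contributes $\operatorname{sign}\det\big(\bigoplus_j \mathscr A^H_{0,j}(\lambda_0)\big)$. For the dynamic part, each block $\mathscr A_{m,j}(\lambda)|_{\mathscr H_{m,j}^H}$ is complex linear. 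Its $S^1$-degree with the auxiliary function (Appendix~\ref{sec:appendix_S1}) equals the winding number of $\lambda\mapsto\det_\bc \mathscr A_{m,j}(\lambda)|_{\mathscr H^H_{m,j}}$ on $\partial B_\varepsilon(\lambda_0)$. This is the standard computation $\deg(\det_\bc,B_\varepsilon)$ for the $\bz_m$-generator, up to normalization by the folding $m$.

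Then I would relate the winding of the determinant to the eigenvalue windings. On $\mathscr H_{m,j}^H$ we have $\mathscr H_{m,j}\simeq \ell_{m,j}\mathcal V_{m,j}$, and by Schur the block acts on the multiplicity space $\bc^{\ell_{m,j}}$ tensored with $\mathcal V_{m,j}$. The determinant on the $H$-fixed part is therefore the product of the critical eigenvalues, each raised to the power $\dim_\bc\mathcal V^H_{m,j}$, times a nonvanishing factor. Taking winding numbers gives $\rho_{m,j}(\lambda_0)\dim_\bc\mathcal V^H_{m,j}$. Summing over $(m,j)$, by additivity of winding numbers under products on the common fixed space, gives $\dim_\bc\mathbb W^H_{\lambda_0}$, and hence \eqref{eq:universal_formula}. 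Nonvanishing is then immediate from the hypothesis.

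The main obstacle is the justification step. We need that the components with different $m$ contribute additively to the single $S^1$-degree coefficient attached to $(H)$. We also need that the recurrence truly collapses, so that no cross terms appear from orbit types $(K)>(H)$ living in $E^H$ or in modes outside the support. My first approach is to pass to the $W(H)$-action on $\mathscr O^H$. There, maximality makes the action free modulo $H$, so the $S^1$-degree reduces to a classical Brouwer degree of the complexified determinant map. Multiplicativity of Brouwer degree with respect to products of complex maps then handles the additivity.
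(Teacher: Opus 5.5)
\textbf{Comparison.} Your argument is correct in outline and uses the same computational core as the paper's proof. That core is:
\begin{enumerate}
\item collapse of the recurrence \eqref{def:twisted_degree_recurrence} at $(H)$;
\item the product property (Lemma~\ref{lemm:s1_degree_product_property}) on the $H$-fixed space, to extract $\operatorname{sign}\det\bigl(\bigoplus_j\mathscr A^H_{0,j}(\lambda_0)\bigr)$;
\item the determinantal reduction (Lemma~\ref{lemm:determinantal_reduction});
\item the Schur factorization $\det_\bc\mathscr A^H_{m,j}=\bigl(\prod_k\mu_{m,j,k}\bigr)^{\dim_\bc\mathcal V^H_{m,j}}$.
\end{enumerate}

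You differ from the paper in how the orbit types $(L)>(H)$ are disposed of. You first homotope away the non-critical blocks and the blocks with $\rho_{m,j}=0$. This is legitimate, since a loop in $GL_N(\bc)$ with zero determinant winding extends invertibly over the disk. After this, $(H)$ is maximal in the whole reduced space, and $\mathscr O^L$ contains no nontrivial zeros.

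The paper performs no such reduction. It proves $\deg_{S^1}(\mathscr A_\Theta[\bn,\mathcal J]^L,\mathscr O[\bn,\mathcal J]^L)=\dim_\bc\mathbb W^L_{\lambda_0}$ for \emph{every} twisted $L$; non-critical and cancelling blocks simply contribute zero winding. Maximality then gives $\dim_\bc\mathbb W^L_{\lambda_0}=0$ for $(L)>(H)$. The price is the Splitting Lemma~\ref{lemm:splitting_lemma}, which is needed to add the contributions of different $m$ at an arbitrary $L$; this is exactly the obstacle you flag. The gain is an identity valid at all orbit types, which is reused when the recurrence is summed over critical points in Theorem~\ref{thm:macroscopic_escape}.

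Your route can avoid that lemma altogether. The maximality behind your freeness claim, namely $G_u=H$ for $0\neq u\in U^H$, forces a single harmonic at $H$. Write $H\cap S^1=\bz_l$. A nonzero $u\in\mathcal V^H_{m,j}$ has $S^1$-isotropy $\bz_m$, so $G_u=H$ gives $m=l$. Hence Lemma~\ref{lemm:determinantal_reduction} is applied once, on $\mathscr H[\{l\},\mathcal J]^H$, and the sum over $j$ is just multiplicativity of $\det_\bc$. No detour through a Brouwer degree on an orbit space is needed.

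\textbf{Three small corrections.}
\begin{enumerate}
\item The recurrence uses the integer-valued cumulative degree $\deg_{S^1}$, not the $A_1(S^1)$-valued $S^1\text{-deg}$.
\item There is no normalization by the folding $m$. The $(\bz_m)$-coefficient in Lemma~\ref{lemm:determinantal_reduction} is exactly the winding number of $\det_\bc$.
\item The action is free only on $U^H\setminus\{0\}$, not on all of $\mathscr O^H$, because the stationary directions are $S^1$-fixed. This suffices: $\bigoplus_j\mathscr A_{0,j}(\lambda)$ is invertible on $\overline{B_\varepsilon(\lambda_0)}$, so every nontrivial zero has vanishing stationary component.
\end{enumerate}
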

\begin{proof}
We evaluate the $(H)$-coefficient directly from the recurrence formula \eqref{def:twisted_degree_recurrence} applied to the complemented operator $\mathscr{A}_\Theta$ on the isolating cylinder $\mathscr O$:
\begin{equation} \label{eq:recurrence}
\operatorname{coeff}^H(\omega_G(\lambda_0)) = \frac{\deg_{S^1}(\mathscr{A}_\Theta^H, \mathscr O^H) - \sum_{(L)>(H)} \operatorname{coeff}^L(\omega_G(\lambda_0))\, n(H,L)\, |W(L)/S^1|}{|W(H)/S^1|}.
\end{equation}
The proof proceeds in two stages: we first establish a general identity linking the cumulative $S^1$-degree on any fixed-point space to the formal complex dimension of the spectral flow, and then apply this identity to collapse the recurrence at $(H)$.
\vs
\noi \textbf{Step 1: The Cumulative Degree--Spectral Flow Identity.}
Let $\mathcal M_0 \subset \bn \cup \{0\}$ and $\mathcal J_0 \subset \mathcal J$ be any two subsets of $S^1$- and $\Gamma$-isotypic indices, respectively. For notational convenience, we write $\mathscr A_\Theta[\mathcal M_0, \mathcal J_0](\lambda)u := ( \Theta[\mathcal M_0, \mathcal J_0](\lambda,u),\mathscr A[\mathcal M_0, \mathcal J_0](\lambda)u)$ to indicate the restriction of the complemented operator $\mathscr A_{\Theta}$ to the $G$-isotypic subspace $\mathscr H[\mathcal M_0, \mathcal J_0]:= \overline{\bigoplus_{m \in \mathcal M_0} \bigoplus_{j \in \mathcal J_0}\mathscr H_{m,j}}$, where $\mathscr A[\mathcal M_0, \mathcal J_0](\lambda) := \bigoplus_{m \in \mathcal M_0} \bigoplus_{j \in \mathcal J_0}\mathscr A_{m,j}(\lambda)$ is the restricted linearization and $\Theta[\mathcal M_0, \mathcal J_0](\lambda,u) := \Theta|_{\bc \times \mathscr H[\mathcal M_0, \mathcal J_0]}(\lambda,u)$ is a restricted auxiliary function defined on the restricted isolating cylinder $\mathscr O[\mathcal M_0, \mathcal J_0] := \mathscr O \cap (\bc \times \mathscr H[\mathcal M_0, \mathcal J_0])$.

We claim that, for any twisted orbit type $(L) \in \Phi_1^t(G; \mathscr H)$, one has
\begin{equation}\label{eq:cumulative_identity}
\deg_{S^1}(\mathscr A_\Theta[\bn, \mathcal J]^L, \mathscr O[\bn, \mathcal J]^L) = \dim_\bc \mathbb{W}_{\lambda_0}^L.
\end{equation}
Since $S^1$ lies in the center of $G = S^1 \times \Gamma$, it normalizes every subgroup $L \leq G$. Consequently, the $L$-fixed-point space $\mathscr H[\bn, \mathcal J]^L =\bigoplus_{m=1}^\infty \bigoplus_{j \in \mathcal J} \mathscr H_{m,j}^L$ is an $S^1$-representation, and the restricted operator $\mathscr A_\Theta[\bn, \mathcal J]^L$ is $S^1$-equivariant. 
Since the $S^1$-action on $\mathcal V_{m,j} = \mathcal U_m \otimes \mathcal V_j$ is carried entirely by the factor $\mathcal U_m$ (via the $m$-folded action $e^{im\theta}$), every vector in the $L$-fixed-point space $\mathcal V_{m,j}^L$ transforms under $S^1$ by this same action. Consequently, $\mathcal V_{m,j}^L \simeq (\dim_\bc \mathcal V_{m,j}^L) \cdot \mathcal U_m$ as an $S^1$-representation, and the $m$-th $S^1$-isotypic component of $\mathscr H[\bn, \mathcal J]^L$ admits the $S^1$-isotypic decomposition
\[
\mathscr H[\{m\}, \mathcal J]^L =\bigoplus_{j \in \mathcal J} \mathscr H_{m,j}^L \;\simeq\;  \left( \sum_{j \in \mathcal J} \ell_{m,j}\, \dim_\bc \mathcal V_{m,j}^L\right) \cdot \mathcal U_m.
\]
By the Splitting Lemma (Lemma~\ref{lemm:splitting_lemma}), the total $S^1$-equivariant degree decomposes as the sum of the degrees on the individual isotypic components:
\begin{equation*}
\s1deg(\mathscr A_\Theta[\bn, \mathcal J]^L, \mathscr O[\bn, \mathcal J]^L) = \sum_{m=1}^\infty \s1deg(\mathscr A_\Theta[\{m\}, \mathcal J]^L, \mathscr O[\{m\}, \mathcal J]^L).
\end{equation*}
Applying the determinantal reduction formula (Lemma~\ref{lemm:determinantal_reduction}) directly to the complemented map $\mathscr A_\Theta[\{m\}, \mathcal J]^L$ on each block, we obtain:
\begin{equation*} 
S^1\text{-deg}(\mathscr A_\Theta[\{m\}, \mathcal J]^L, \mathscr O[\{m\}, \mathcal J]^L) = \deg(\det\nolimits_\bc(\mathscr A[\{m\}, \mathcal J]^L), \partial B_\varepsilon(\lambda_0)) \cdot (\bz_m).
\end{equation*}
Here and below, $\det_\bc(\mathscr A[\{m\}, \mathcal J]^L)$ denotes the determinant of the finite-dimensional spectral reduction of $\mathscr A[\{m\}, \mathcal J]^L(\lambda) = \mathscr A(\lambda)|_{\mathscr H[\{m\}, \mathcal J]^L}: \mathscr H[\{m\}, \mathcal J]^L \to \mathscr H[\{m\}, \mathcal J]^L$ to the generalized eigenspaces associated with eigenvalues that vanish at $\lambda_0$; the uniformly invertible complement contributes zero winding. Taking the cumulative $S^1$-degree (i.e., summing the coefficients of all cyclic generators) yields
\begin{align}\label{eq:cumulative_sum}
\deg_{S^1}(\mathscr A_\Theta[\bn, \mathcal J]^L, \mathscr O[\bn, \mathcal J]^L) = \sum_{m=1}^\infty \deg(\det\nolimits_\bc(\mathscr A[\{m\}, \mathcal J]^L), \partial B_\varepsilon(\lambda_0)).
\end{align}
It remains to evaluate each winding number $\deg(\det_\bc(\mathscr A[\{m\}, \mathcal J]^L), \partial B_\varepsilon(\lambda_0))$. On the $m$-th block, the operator decomposes further as $\mathscr A[\{m\}, \mathcal J]^L(\lambda) = \bigoplus_j \mathscr A_{m,j}^L(\lambda)$, so the complex determinant factors accordingly to
\[
\det\nolimits_\bc(\mathscr A[\{m\}, \mathcal J]^L(\lambda)) = \prod_{j \in \mathcal J} \det\nolimits_\bc(\mathscr A_{m,j}^L(\lambda)).
\]
By Schur's Lemma, the $G$-equivariant operator $\mathscr A_{m,j}(\lambda)$ acts on the $G$-isotypic component $\mathscr H_{m,j} \simeq \ell_{m,j}\, \mathcal V_{m,j}$ as $\id_{\mathcal V_{m,j}} \otimes B_{m,j}(\lambda)$, where $B_{m,j}(\lambda)$ is an $\ell_{m,j} \times \ell_{m,j}$ complex matrix with eigenvalues $\{\mu_{m,j,k}(\lambda)\}_k$. On the $L$-fixed subspace $\mathscr H_{m,j}^L \simeq \ell_{m,j}\, \mathcal V_{m,j}^L$, the restricted operator $\mathscr A_{m,j}^L(\lambda)$ acts as $\id_{\mathcal V_{m,j}^L} \otimes B_{m,j}(\lambda)$, yielding the determinant
\[
\det\nolimits_\bc(\mathscr A_{m,j}^L(\lambda)) = \left(\prod_{k} \mu_{m,j,k}(\lambda)\right)^{\dim_\bc \mathcal V_{m,j}^L}.
\]
Since the winding number is additive under products and multiplicative under powers, one obtains
\[
\deg(\det\nolimits_\bc(\mathscr A_{m,j}^L), \partial B_\varepsilon(\lambda_0)) = \dim_\bc \mathcal V_{m,j}^L \cdot \sum_{k \in \mathcal K_{m,j}(\lambda_0)} \deg(\mu_{m,j,k}, B_\varepsilon(\lambda_0)) = \dim_\bc \mathcal V_{m,j}^L \cdot \rho_{m,j}(\lambda_0).
\]
Summing over $j$ yields $\deg(\det_\bc(\mathscr A[\{m\}, \mathcal J]^L), \partial B_\varepsilon(\lambda_0)) = \sum_{j \in \mathcal J} \rho_{m,j}(\lambda_0)\, \dim_\bc \mathcal V_{m,j}^L$, and substituting into \eqref{eq:cumulative_sum} recovers
\[
\deg_{S^1}(\mathscr A_\Theta[\bn, \mathcal J]^L, \mathscr O[\bn, \mathcal J]^L) = \sum_{m=1}^\infty \sum_{j \in \mathcal J} \rho_{m,j}(\lambda_0)\, \dim_\bc \mathcal V_{m,j}^L = \dim_\bc \mathbb W_{\lambda_0}^L,
\]
establishing the identity \eqref{eq:cumulative_identity}.
\vs
\noi \textbf{Step 2: Collapse of the Recurrence at the Maximal Orbit Type.}
By the product property of the cumulative $S^1$-degree (Lemma~\ref{lemm:s1_degree_product_property}), which allows a direct sum with a fixed-point space on which $S^1$ acts trivially to contribute only a local Brouwer-degree sign, the cumulative degree on the $L$-fixed-point space factors as
\begin{equation}\label{eq:product_factorization}
\deg_{S^1}(\mathscr{A}_\Theta^L, \mathscr O^L) = \operatorname{sign}\det\left(\bigoplus_{j \in \mathcal{J}} \mathscr{A}_{0,j}^L(\lambda_0)\right) \cdot \deg_{S^1}(\mathscr A_\Theta[\bn, \mathcal J]^L, \mathscr O[\bn, \mathcal J]^L).
\end{equation}
Since $(H)$ is maximal in the support of the spectral flow representation $\mathbb{W}_{\lambda_0}$, any strictly larger orbit type $(L) > (H)$ satisfies $\dim_\bc \mathbb{W}_{\lambda_0}^L = 0$. 
Indeed, for any $(L) > (H)$ in $\Phi_1^t(G)$, the maximality of $(H)$ in the support implies that $\operatorname{supp}(\mathbb{W}_{\lambda_0})^L 
\subseteq \operatorname{supp}(\mathbb{W}_{\lambda_0})^G = \{0\}$ (the latter since all active modes satisfy $m \geq 1$), so $\dim_\bc \mathcal{V}_{m,j}^L = 0$ for every $m\geq 1$ and the formal dimension vanishes termwise. By the identity \eqref{eq:cumulative_identity}, it follows that
\[
\deg_{S^1}(\mathscr A_\Theta[\bn, \mathcal J]^L, \mathscr O[\bn, \mathcal J]^L) = 0 \quad \implies \quad \deg_{S^1}(\mathscr{A}_\Theta^L, \mathscr O^L) = 0 \quad \text{ for all } \quad  (L) > (H).
\]
Proceeding inductively from the top of the isotropy lattice via the recurrence formula \eqref{def:twisted_degree_recurrence},  one has $\operatorname{coeff}^L(\omega_G(\lambda_0)) = 0$ for all $(L) > (H)$, such that the summation in \eqref{eq:recurrence} vanishes, and the coefficient formula collapses to
\begin{equation*} 
\operatorname{coeff}^H(\omega_G(\lambda_0)) = \frac{\deg_{S^1}(\mathscr{A}_\Theta^H, \mathscr O^H)}{|W(H)/S^1|}.
\end{equation*}
Applying the factorization \eqref{eq:product_factorization} and the identity \eqref{eq:cumulative_identity} with $L = H$, one obtains
\begin{equation*} 
\operatorname{coeff}^H(\omega_G(\lambda_0)) = \operatorname{sign}\det\left(\bigoplus_{j \in \mathcal J} \mathscr{A}_{0,j}^H(\lambda_0)\right) \cdot \frac{\dim_\bc \mathbb{W}_{\lambda_0}^H}{|W(H)/S^1|} \neq 0,
\end{equation*}
where the final inequality holds since $\dim_\bc \mathbb W_{\lambda_0}^H \neq 0$ by hypothesis and $\operatorname{sign}\det(\cdot) = \pm 1$ since the critical point is dynamic. Expanding the definition of $\dim_\bc \mathbb{W}_{\lambda_0}^H$ recovers the explicit computational formula \eqref{eq:universal_formula}.
\end{proof}

\begin{corollary}\label{cor:holomorphic_universal_guarantee}
Suppose that the critical dynamic eigenvalues of $\mathscr{A}(\lambda)$ are
holomorphic in a neighborhood of an isolated dynamic critical point
$(\lambda_0,0)\in\Lambda$. If
$(H)\in \Phi_1^t(G;\ker_{gen}\mathscr A(\lambda_0))$ is a \textbf{maximal}
orbit type in the generalized kernel, then the coefficient formula
\eqref{eq:universal_formula} for
$\operatorname{coeff}^H(\omega_G(\lambda_0))$ is unconditionally non-zero.
\end{corollary}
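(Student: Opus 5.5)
The plan is to reduce the corollary to Theorem~\ref{thm:generalized_guarantee} by combining it with Lemma~\ref{lem:holomorphic_kernel}. Two hypotheses of the theorem must be checked. First, $(H)$ must be a maximal orbit type in the support of $\mathbb W_{\lambda_0}$. Second, the formal complex dimension $\dim_\bc \mathbb W_{\lambda_0}^H$ must be nonzero.

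\textbf{Step 1: maximality transfers to the support.} By Lemma~\ref{lem:holomorphic_kernel}, holomorphy makes $\mathbb W_{\lambda_0}$ an actual representation. Its support equals the support of the dynamic generalized kernel $\bigoplus_{m\ge1}\bigoplus_j d_{m,j}(\lambda_0)\mathcal V_{m,j}$. Since the critical point is dynamic, $d_{0,j}(\lambda_0)=0$ for all $j$, so the generalized kernel is entirely dynamic. Orbit types of a representation depend only on which irreducibles occur, not on their multiplicities. Hence
\[
\Phi_1^t(G;\ker_{gen}\mathscr A(\lambda_0)) = \Phi_1^t(G;\operatorname{supp}\mathbb W_{\lambda_0}),
\]
and maximality of $(H)$ in one set is the same as maximality in the other.

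\textbf{Step 2: positivity of the formal dimension.} The winding number of each critical eigenvalue is its order of vanishing, which is positive. Therefore every $\rho_{m,j}(\lambda_0)$ is non-negative, and it is strictly positive exactly when $d_{m,j}(\lambda_0)>0$. Because $(H)$ is an orbit type of the generalized kernel, some nonzero vector has isotropy exactly $H$. The isotypic decomposition is $G$-invariant and $H$-invariant, so projecting that vector onto isotypic components yields at least one active pair $(m,j)$ with $m\ge1$ and $\dim_\bc\mathcal V_{m,j}^H>0$. Consequently $\dim_\bc\mathbb W_{\lambda_0}^H$ is a sum of non-negative terms containing at least one positive term, so it is strictly positive.

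\textbf{Step 3: apply the theorem.} Both hypotheses of Theorem~\ref{thm:generalized_guarantee} now hold, and it gives formula \eqref{eq:universal_formula} with value $\pm\dim_\bc\mathbb W_{\lambda_0}^H/|W(H)/S^1|\neq0$.

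\textbf{Main obstacle.} The one delicate point is Step 1: the set of orbit types must depend only on which irreducibles occur. I would handle this by noting that any vector of a multiple $\ell\mathcal V$ decomposes into finitely many copies of $\mathcal V$. Its isotropy is the intersection of the isotropies of those components. That intersection is realized as an isotropy in $\mathcal V\oplus\cdots\oplus\mathcal V$, and this sum has the same support as $\ell\mathcal V$.
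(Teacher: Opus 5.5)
Your route is the paper's: use Lemma~\ref{lem:holomorphic_kernel} to pass from the generalized kernel to $\operatorname{supp}(\mathbb W_{\lambda_0})$, get $\dim_\bc\mathbb W_{\lambda_0}^H>0$ from non-negativity of all $\rho_{m,j}(\lambda_0)$ plus one strictly positive term, and invoke Theorem~\ref{thm:generalized_guarantee}. Steps~2 and~3 are fine. Your equivariant-projection argument for producing an active $(m,j)$ with $\mathcal V_{m,j}^H\neq 0$ is in fact more explicit than the paper's.

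The problem is Step~1, at exactly the point you flagged.

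\textbf{The claimed equality is false.} It is not true that orbit types depend only on which irreducibles occur. The isotropy of $(v_1,v_2)\in\mathcal V\oplus\mathcal V$ is $G_{v_1}\cap G_{v_2}$, which need not be an isotropy group of $\mathcal V$. For example, let $SO(3)$ act by conjugation on traceless symmetric $3\times 3$ matrices. There the nonzero isotropy types are $(O(2))$ and $(D_2)$. Yet the pair $\bigl(\operatorname{diag}(1,2,-3),\,3nn^T-I\bigr)$ with $n=(e_1+e_2)/\sqrt2$ has isotropy $\{I,\operatorname{diag}(-1,-1,1)\}\simeq\bz_2$.

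\textbf{Your justification is circular.} $\mathcal V\oplus\cdots\oplus\mathcal V$ is $\ell\mathcal V$ itself, whereas $\operatorname{supp}$ contains only one copy. So the displayed equality of orbit-type sets should be dropped.

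\textbf{What you actually need.} It suffices that the two posets of orbit types have the same maximal elements, and this is true.
\begin{itemize}
\item[(a)] The support embeds $G$-equivariantly in the kernel (one copy of each active $\mathcal V_{m,j}$), so its orbit types are orbit types of the kernel.
\item[(b)] Conversely, take $v\neq 0$ in the kernel. Fix a decomposition into irreducible copies and choose a nonzero component $w$ of $v$ lying in one copy of some active $\mathcal V_{m,j}$. Equivariance of the projections gives $G_v\le G_w$, and $(G_w)$ is an orbit type of the support.
\end{itemize}
Hence if $(H)$ is maximal in the kernel, some support orbit type dominates $(H)$, and by maximality it must equal $(H)$. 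Then $(H)$ is also maximal in the support, by (a).

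With this replacement your proof is complete. The paper itself simply asserts this transfer from the equality of supports without further argument.
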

\begin{proof}
Since $(\lambda_0,0)$ is dynamic, the generalized kernel has no stationary
part. By Lemma~\ref{lem:holomorphic_kernel}, the spectral flow representation
$\mathbb W_{\lambda_0}$ is an actual $G$-representation that shares the same support as the dynamic generalized kernel. Hence $(H)$ is also a maximal orbit type in the support of $\mathbb W_{\lambda_0}$.

It remains only to verify that the formal fixed-point dimension is non-zero.
Since $(H)$ occurs in the support of $\mathbb W_{\lambda_0}$, there exists a
non-zero vector in $\operatorname{supp}(\mathbb W_{\lambda_0})$ fixed by $H$.
Thus, for at least one active index $(m,j)$, one has both $\rho_{m,j}(\lambda_0)>0$ and $\dim_\bc \mathcal V_{m,j}^H>0$.
Moreover, Lemma~\ref{lem:holomorphic_kernel} gives
$\rho_{m,j}(\lambda_0)\geq 0$ for every dynamic block. Therefore every term in
the formal dimension
\[
\dim_\bc \mathbb W_{\lambda_0}^H
=
\sum_{m=1}^{\infty}\sum_{j\in\mathcal J}
\rho_{m,j}(\lambda_0)\dim_\bc\mathcal V_{m,j}^H
\]
is non-negative, and at least one term is strictly positive. Hence
$\dim_\bc \mathbb W_{\lambda_0}^H>0$. Theorem~\ref{thm:generalized_guarantee}
therefore applies and gives $\operatorname{coeff}^H(\omega_G(\lambda_0))\neq 0$.
\end{proof}
Having established the local theory---both in general (Theorem~\ref{thm:generalized_guarantee}) and in the holomorphic setting (Corollary~\ref{cor:holomorphic_universal_guarantee})---we now turn to the question of global continuation.
\subsection{Resolution of the Rabinowitz Alternative}
\label{sec:unbounded_branches}
The preceding results determine the coefficient of the local bifurcation invariant at a single isolated critical point. We now turn to global continuation, asking: when is a bifurcating branch forced to escape any open, bounded $G$-invariant domain $\mathcal U \subset \bc \times \mathscr H$ satisfying $\partial \mathcal U \cap \Lambda = \emptyset$? The argument proceeds in three layers, each building on the last. 
\begin{enumerate}
    \item \textbf{Topological escape criterion} (Theorem~\ref{thm:rab_alt} and Corollary~\ref{cor:macro_escape}). The equivariant Rabinowitz alternative reduces macroscopic escape to the nonvanishing of an aggregate local bifurcation invariant $\sum_{(\lambda_i,0) \in \mathcal U \cap \Lambda} \omega_G(\lambda_i)$.
    \item \textbf{Spectral translation} (Theorem~\ref{thm:macroscopic_escape} and Corollary~\ref{cor:holomorphic_escape}). The local spectral flows over $\mathcal U$ are assembled into signed aggregate spectral flows by weighting each critical point with the stationary determinant sign on the relevant fixed-point space. If $(H)$ is maximal in the aggregate support, then all higher signed charges vanish automatically; macroscopic escape is reduced to the nonvanishing of the $H$-signed formal fixed-point dimension.
     \item \textbf{Spatio-temporal reduction} (Corollary~\ref{cor:spatiotemporal_guarantee}, Theorem~\ref{thm:spatio_temporal_macroscopic_escape}, and Corollary~\ref{cor:spatio_temporal_holomorphic_escape}). Restricting to an anti-periodic fixed-point subspace eliminates the stationary modes, so that the escape criterion reduces to a single formal-dimension condition on the aggregate spectral flow. 
\end{enumerate}
The intuition behind the first layer is as follows. If a bifurcating branch $\mathscr C$ remains trapped inside a bounded domain $\mathcal U$, it can only terminate on other critical points inside $\mathcal U$, and the equivariant degree contributions at these endpoints must cancel in aggregate. Contrapositively, a nonzero aggregate invariant
forces at least one branch to reach the boundary. The following result makes this precise; it is the equivariant analogue of the classical  Rabinowitz alternative (see~\cite{Rabinowitz}) and is standard in the 
context of the equivariant degree theory (see~\cite{AED, book-new, survey}). 
\begin{theorem} \rm \label{thm:rab_alt}{\bf (Rabinowitz' Alternative)}
Let $\mathcal U \subset \bc \times \mathscr H$ be any open bounded $G$-invariant set with $\partial \mathcal U \cap \Lambda = \emptyset$. If $\mathscr C \subset \overline{\mathscr S}$ is a branch of nontrivial solutions to \eqref{eq:bif_problem} bifurcating from a critical point $(\lambda_i,0) \in \mathcal U \cap \Lambda$ satisfying $\omega_G(\lambda_i) \neq 0$, then one has the following alternative:
\begin{enumerate}[label=$(\alph*)$]
\item\label{alt_a}  either $\mathscr C \cap \partial \mathcal U \neq \emptyset$;
    \item\label{alt_b} or there exists a finite set $\mathscr C \cap \Lambda = \{ (\lambda_1,0), \ldots, (\lambda_{N},0) \}$
    satisfying the following relation
    \begin{align*}
\sum\limits_{i=1}^{N}\omega_G(\lambda_i) = 0.
    \end{align*}
\end{enumerate} 
\end{theorem}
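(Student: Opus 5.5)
The plan is the classical Rabinowitz argument, run with the twisted equivariant degree and its axioms from Appendix~\ref{sec:appendix_twisted_deg}: additivity, excision, homotopy invariance \ref{t2} and existence \ref{t4}. We argue by contradiction and assume that \ref{alt_a} fails, so $\mathscr C\cap\partial\mathcal U=\emptyset$.

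\textbf{Step 1: compactness and finiteness.} Since $\mathscr C$ meets $\mathcal U$, is connected and avoids $\partial\mathcal U$, it lies in $\mathcal U$. Because $\mathcal U$ is bounded and $\mathscr F$ is a completely continuous field, $\mathscr C$ is compact. The critical points on $\mathscr C\cap M$ are isolated, so a compact set meets only finitely many of them, giving $\mathscr C\cap\Lambda=\{(\lambda_1,0),\dots,(\lambda_N,0)\}$. These are the only trivial points of $\mathscr C$, because at a trivial point where $\mathscr A(\lambda)$ is invertible the implicit function theorem excludes nearby nontrivial solutions.

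\textbf{Step 2: separating neighbourhood (Whyburn).} Fix small isolating cylinders $\mathscr O_i$ around each $(\lambda_i,0)$. Let $\mathscr S'$ be the compact set of solutions in $\overline{\mathcal U}$ lying outside a thin tube around $M$, with the cylinders $\mathscr O_i$ removed from that tube. Apply Whyburn's lemma to $\mathscr C$ and to the rest of $\overline{\mathscr S}\cap\overline{\mathcal U}$; this is the main technical obstacle. It yields an open bounded $G$-invariant set $\Omega\subset\mathcal U$ with three properties: $\Omega\supset\mathscr C$; $\partial\Omega\cap\overline{\mathscr S}=\emptyset$; and near $M$, $\Omega$ agrees with $\bigcup_i\mathscr O_i$ together with a thin tube around the trivial points of $\Omega$ away from the $\lambda_i$. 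Replacing $\Omega$ by $G\Omega$ makes it invariant, and this does not spoil the three properties since $\mathscr C$ and $\overline{\mathscr S}$ are $G$-invariant.

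\textbf{Step 3: the degree computation.} Build a single $G$-invariant auxiliary function $\Theta$ on $\Omega$. It is negative on the trivial part of $\partial\Omega$, that is, on the tube sides at $u=0$ where $\mathscr A(\lambda)$ is invertible. It is positive at the points of $\partial\Omega$ with $\|u\|=\delta$ away from $\Lambda$, and it restricts to the local auxiliary functions on the $\mathscr O_i$. For example, take
\[
\Theta(\lambda,u)=\|u\|-\delta\,\phi(\lambda),
\]
with $\phi$ an invariant cutoff that is small near $M$ and about $1$ away from $\Lambda$. Then $\mathscr F_\Theta$ is $\Omega$-admissible. Next deform $\Theta$ by an admissible homotopy to an auxiliary function $\Theta_1$ that is positive on all of $\Omega$ outside $\bigcup_i\mathscr O_i$; this homotopy is admissible because $\partial\Omega$ contains no solutions of $\mathscr F=0$. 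Then $(\Theta_1,\mathscr F)$ has no zeros outside $\bigcup_i\mathscr O_i$, so excision and additivity give
\[
\gdeg(\mathscr F_{\Theta_1},\Omega)=\sum_i \gdeg(\mathscr F_\Theta,\mathscr O_i)=\sum_i\omega_G(\lambda_i),
\]
where the last equality uses Lemma~\ref{lem:linearization_homotopy}. Separately, push $\Theta_1$ up to a strictly positive function, admissibly on $\partial\Omega$, so the map has no zeros in $\Omega$ and, by the existence property \ref{t4}, its degree is $0$. Homotopy invariance \ref{t2} then yields $\sum_i\omega_G(\lambda_i)=0$, which is alternative \ref{alt_b}.

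The delicate point is Step 2 together with the compatibility of the single $\Theta$ with the local $\mathscr O_i$. The trivial set $M$ is unbounded in $\lambda$ but is cut off by $\mathcal U$, so the invariance of $\Omega$ and the sign conditions on $\partial\Omega\cap M$ have to be arranged carefully. The condition $\partial\mathcal U\cap\Lambda=\emptyset$ ensures no critical point sits on the boundary. I would follow the standard construction in \cite{book-new} verbatim.
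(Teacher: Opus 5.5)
The paper does not prove this statement; it quotes it as standard from \cite{AED, book-new, survey}. Your outline follows that standard argument: compactness of $\mathscr C$, a Whyburn-type separating neighbourhood $\Omega$, and a complemented degree on $\Omega$ localized onto the cylinders $\mathscr O_i$. The degree computation in your Step~3, however, fails as written.

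Since $\Omega$ is open, bounded and contains $(\lambda_i,0)$, the set $\Omega\cap M$ is a nonempty bounded open subset of the connected set $M\cong\bc$, so it is not closed in $M$. Hence $\partial\Omega$ always contains trivial points $(\lambda,0)$, and these are zeros of $\mathscr F$. Your claim that the homotopy is admissible ``because $\partial\Omega$ contains no solutions of $\mathscr F=0$'' is therefore false. $\partial\Omega$ contains no \emph{nontrivial} solutions, but at its trivial points admissibility of $(\Theta_t,\mathscr F)$ forces $\Theta_t(\lambda,0)\neq 0$ for every $t$. Your initial $\Theta$ is negative there, as it must be, so every admissible deformation keeps $\Theta_t<0$ at those points. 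Two consequences follow:
\begin{enumerate}
\item[(i)] You cannot deform to a $\Theta_1$ that is positive on all of $\Omega\setminus\bigcup_i\mathscr O_i$, because the closure of that set contains the trivial points on your ``tube sides''.
\item[(ii)] You can never ``push $\Theta_1$ up to a strictly positive function''. This fails even if $\Omega\cap M=\bigcup_i B_\varepsilon(\lambda_i)\times\{0\}$, since then the circles $\partial B_\varepsilon(\lambda_i)\times\{0\}$ lie in $\partial\Omega$ and an auxiliary function must be negative on them.
\end{enumerate}

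The repair is to push in the opposite direction. Take the $G$-invariant function $\Theta_r(\lambda,u):=\|u\|^2-r^2$.
\begin{enumerate}
\item[(a)] For every $r>0$ the map $(\Theta_r,\mathscr F)$ is $\Omega$-admissible. Trivial boundary points give $\Theta_r=-r^2<0$, and a zero on $\partial\Omega$ with $\|u\|=r>0$ would be a nontrivial solution on $\partial\Omega$.
\item[(b)] For $R>\sup_{\Omega}\|u\|$ the first component is negative on $\overline\Omega$, so there are no zeros and the degree is $0$. The homotopy in $r\in[r_0,R]$ carries this down to small $r_0$.
\item[(c)] Assume $\overline\Omega\cap\Lambda=\mathscr C\cap\Lambda$. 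Then nontrivial solutions in $\overline\Omega$ with $\lambda\notin\bigcup_iB_\varepsilon(\lambda_i)$ satisfy $\|u\|\geq c$ for some $c>0$. Otherwise a sequence with $u_n\to 0$ would produce a bifurcation point, hence a critical point, of $\overline\Omega$ outside the disks.
\item[(d)] For $r_0<\min(c,\delta)$ all zeros lie in $\bigcup_i\mathscr O_i$, and $\Theta_{r_0}$ is an auxiliary function on each $\mathscr O_i$. Additivity then gives $\sum_i\omega_G(\lambda_i)=0$. This uses that the local invariant does not depend on the auxiliary function, since auxiliary functions on $\mathscr O_i$ form a convex set.
\end{enumerate}

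Step (c) shows where the condition $\overline\Omega\cap\Lambda=\mathscr C\cap\Lambda$ is needed, and your Step~2 does not secure it. Whyburn's lemma should be applied to the compact set $K=\overline{\mathscr S}\cap\overline U$, where $U$ is a small neighbourhood of $\mathscr C$ with $\overline U\cap\Lambda=\mathscr C\cap\Lambda$, taking $A=\mathscr C$ and $B=K\cap\partial U$. Applying it to ``$\mathscr C$ and the rest of $\overline{\mathscr S}\cap\overline{\mathcal U}$'' does not work: that set is not closed. Moreover, a piece separated inside $\overline{\mathcal U}$ may contain other critical points, whose invariants would then appear in your final relation.
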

We impose a standing critical-set hypothesis, including a non-emptiness assumption to ensure that the problem has critical parameters from which bifurcation may occur and a discreteness assumption to guarantee that each critical point is isolated, so that the local bifurcation invariant $(\omega_G(\lambda_0)$ is well defined after choosing a sufficiently small
isolating disk around $\lambda_0$:
\begin{enumerate}[label=($B_0$)] 
    \item\label{b0} $\Lambda \neq \emptyset$ and $\Lambda$ is discrete.
\end{enumerate}
\begin{corollary}\label{cor:macro_escape}
Let $\mathcal U\subset \bc\times\mathscr H$ be open, bounded and $G$-invariant, with $\mathcal U\cap\Lambda=\{(\lambda_1,0),\ldots, (\lambda_N,0)\}$ and $\partial\mathcal U\cap\Lambda=\emptyset$. If one has
\[
\sum_{i=1}^N \omega_G(\lambda_i)\neq 0,
\]
then there exists a maximal connected component
$\mathscr C\subset\overline{\mathscr S}$ satisfying $\mathscr C\cap\partial\mathcal U\neq\emptyset$.
\end{corollary}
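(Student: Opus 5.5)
The plan is to argue by contraposition from the Rabinowitz alternative (Theorem~\ref{thm:rab_alt}). First I will establish that some critical point in $\mathcal U$ carries a non-zero invariant, so that a branch exists at all. Since $\sum_{i=1}^N \omega_G(\lambda_i)\neq 0$ in $A_1^t(G)$, at least one summand is non-zero. Fix an index $i_0$ with $\omega_G(\lambda_{i_0})\neq 0$, and choose a twisted orbit type $(H)\in\Phi_1^t(G)$ with $\operatorname{coeff}^H(\omega_G(\lambda_{i_0}))\neq 0$. Theorem~\ref{thm:app_abstract_local_bif} then produces a non-trivial continuum bifurcating from $(\lambda_{i_0},0)$. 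Let $\mathscr C\subset\overline{\mathscr S}$ be the maximal connected component containing it; it is the branch with branching point $(\lambda_{i_0},0)$.

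Next I apply Theorem~\ref{thm:rab_alt} to $\mathscr C$ and $\mathcal U$; its hypotheses (open, bounded, $G$-invariant, $\partial\mathcal U\cap\Lambda=\emptyset$, $\omega_G(\lambda_{i_0})\neq0$) are exactly ours. If alternative \ref{alt_a} holds, we are done. Otherwise alternative \ref{alt_b} holds: $\mathscr C\cap\Lambda$ is a finite set of critical points whose invariants sum to zero. Because $\mathscr C$ does not meet $\partial\mathcal U$, connectedness forces $\mathscr C\subset\mathcal U$, so $\mathscr C\cap\Lambda\subseteq\{(\lambda_1,0),\dots,(\lambda_N,0)\}$.

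The main obstacle is the bookkeeping step, since a single branch's vanishing sum need not exhaust all of $\mathcal U\cap\Lambda$. I would handle it as follows.
\begin{itemize}
\item Partition $\{\lambda_1,\dots,\lambda_N\}$ according to the connected components of $\overline{\mathscr S}\cup(\mathcal U\cap\Lambda)$, say by components of $\overline{\mathscr S}\cap\overline{\mathcal U}$ together with the isolated critical points that no branch reaches.
\item Suppose every maximal component through these points stays inside $\mathcal U$.
\item Components containing some $\lambda_i$ with $\omega_G(\lambda_i)\neq 0$ then have zero aggregate invariant by Theorem~\ref{thm:rab_alt}\ref{alt_b}.
\item Points lying on no such component either have $\omega_G(\lambda_i)=0$, or, if $\omega_G(\lambda_i)\neq0$, admit a branch by the first paragraph, so they are already covered.
\item Since distinct maximal components are disjoint, each $\lambda_i$ belongs to exactly one class, and summing over classes gives $\sum_i\omega_G(\lambda_i)=0$, a contradiction.
\end{itemize}
The delicate points are that the maximal components through distinct critical points are either identical or disjoint, and that a component trapped in $\mathcal U$ meets $\Lambda$ only at the $\lambda_i$. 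Both follow from maximality and from $\partial\mathcal U\cap\Lambda=\emptyset$.

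Consequently some maximal component $\mathscr C$ satisfies $\mathscr C\cap\partial\mathcal U\neq\emptyset$. If the paper's version of the alternative is instead the standard degree-theoretic one (derived from the generalized homotopy invariance of $\gdeg$ on $\mathcal U$ with an auxiliary function), I would invoke that form directly. It gives the same conclusion without the partition argument, since there the sum over all critical points in a component-free neighbourhood of $\overline{\mathscr S}\cap\mathcal U$ is what vanishes.
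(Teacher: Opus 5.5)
Your proposal is correct and follows essentially the same route as the paper: argue by contradiction, partition the critical points in $\mathcal U$ by the maximal connected components of $\overline{\mathscr S}$ through them, use Theorem~\ref{thm:app_abstract_local_bif} to show that points lying on no branch carry zero invariant, and apply alternative~\ref{alt_b} of Theorem~\ref{thm:rab_alt} on each trapped component to force $\sum_i\omega_G(\lambda_i)=0$. The preliminary single-branch step and the closing hedge are unnecessary, and the partition is best phrased directly via maximal components of $\overline{\mathscr S}$; under the contradiction hypothesis these coincide with the relevant components of $\overline{\mathscr S}\cap\overline{\mathcal U}$ that you mention.
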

\begin{proof}
Suppose, for the sake of contradiction, that no maximal connected component of
$\overline{\mathscr S}$ containing a critical point in $\mathcal U\cap\Lambda$ intersects $\partial\mathcal U$. We partition the critical points $\{(\lambda_1,0),\ldots,(\lambda_N,0)\}$ according to their membership in the maximal connected components of $\overline{\mathscr{S}}$. In particular, 
let $\mathscr C_1,\ldots,\mathscr C_p$ be the distinct maximal connected components of $\overline{\mathscr{S}}$ that contain at least one critical point from $\mathcal{U} \cap \Lambda$, and define
\[
\Lambda_k := \{(\lambda_i,0) \in \mathcal{U} \cap \Lambda : (\lambda_i,0) \in \mathscr C_k\}, \quad k = 1,\ldots,p.
\]
Since the maximal connected components of a topological space are pairwise disjoint, the sets $\Lambda_1,\ldots,\Lambda_p$ are also pairwise disjoint. Let $\Lambda_0 := \{(\lambda_i,0)\}_{i=1}^N \setminus \bigcup_{k=1}^p \Lambda_k$ denote the remaining critical points, which are not limit points of any continuum of non-trivial solutions. We claim that the total sum of local bifurcation invariants vanishes, contradicting the hypothesis $\sum_{i=1}^N \omega_G(\lambda_i)\neq 0$.
\vs
\noi\textit{Critical points in $\Lambda_0$:} If any $(\lambda_i,0) \in \Lambda_0$ satisfied $\omega_{G}(\lambda_i) \neq 0$, then Theorem~\ref{thm:app_abstract_local_bif} would guarantee a branch of non-trivial solutions emerging from $(\lambda_i,0)$, placing $(\lambda_i,0)$ in $\Lambda_k$ for some $k \in \{1,\ldots,p\}$, a contradiction. Therefore $\omega_G(\lambda_i) = 0$ for every $(\lambda_i,0) \in \Lambda_0$.
\vs
\noi\textit{Critical points in $\Lambda_k$ $(k \geq 1)$:}
Fix any $k\in\{1,\ldots,p\}$. If $\omega_G(\lambda)=0$ for every $(\lambda,0)\in\Lambda_k$ then clearly $\sum_{(\lambda,0)\in\Lambda_k}\omega_G(\lambda)=0$. Otherwise, there exists at least one critical point $(\lambda_*,0)\in\Lambda_k$ with $\omega_G(\lambda_*)\neq 0$. The maximal connected component $\mathscr C_k$ of $\overline{\mathscr S}$ is then a branch of non-trivial solutions bifurcating from $(\lambda_*,0)$. By our
contradiction hypothesis, $\mathscr C_k\cap\partial\mathcal U=\emptyset$.
Since $\mathscr C_k$ emerges from the trivial branch in $\mathcal U$ and does not intersect $\partial\mathcal U$, connectivity implies $\mathscr C_k\subset\mathcal U$. Therefore the Rabinowitz alternative (Theorem~\ref{thm:rab_alt}) applies to $\mathscr C_k$. Since alternative~\ref{alt_a} is impossible,
alternative~\ref{alt_b} yields
\[
\sum_{(\lambda,0)\in\Lambda_k}\omega_G(\lambda)=0.
\]
Thus, in either case, one has $\sum_{(\lambda,0)\in\Lambda_k}\omega_G(\lambda)=0$ for all $k=1,\ldots,p$. Summing over all critical points in $\mathcal{U}$, one has
\[
\sum_{i=1}^N \omega_G(\lambda_i) = \underbrace{\sum_{(\lambda,0) \in \Lambda_0} \omega_G(\lambda)}_{= 0} + \sum_{k=1}^p \underbrace{\sum_{(\lambda,0) \in \Lambda_k} \omega_G(\lambda)}_{= 0} = 0,
\]
which directly contradicts our hypothesis. Therefore, at least one branch $\mathscr C$ must intersect $\partial\mathcal{U}$.
\end{proof}

\subsubsection{The Signed Aggregate Spectral Flow}
Theorem~\ref{thm:rab_alt} and Corollary~\ref{cor:macro_escape} are entirely topological: they guarantee escape whenever the aggregate bifurcation invariant is nonzero, but do not indicate how to verify this condition from spectral data. We now bridge this gap by introducing a signed aggregate spectral flow whose formal fixed-point dimensions recover the coefficients of the aggregate invariant at maximal orbit types.
\vs
Let $\mathcal U\subset\bc\times\mathscr H$ be any open, bounded and $G$-invariant set satisfying $\partial\mathcal U\cap\Lambda=\emptyset$ and assume that $\mathcal U\cap\Lambda$ is comprised of a finite number of dynamic critical points, such that the stationary block $\bigoplus_{j\in\mathcal J}\mathscr A_{0,j}(\lambda_i) : \bigoplus_{j\in\mathcal J} \mathscr H_{0,j} \to \bigoplus_{j\in\mathcal J}\mathscr H_{0,j}$ is invertible for each $(\lambda_i,0) \in \mathcal U\cap\Lambda$. We first record the \emph{unsigned aggregate support}
\[
\mathbb S_{\mathcal U} := \bigoplus_{(\lambda_i,0)\in\mathcal U\cap\Lambda} \operatorname{supp}(\mathbb W_{\lambda_i}),
\]
then,  for any twisted orbit type $(K)\in\Phi_1^t(G)$, we define the $K$-signed aggregate spectral flow over $\mathcal U$ by
\[
\mathbb W_{\mathcal U}(K)
:= \sum_{(\lambda_i,0)\in\mathcal U\cap\Lambda}
\operatorname{sign}\det_\br
\left( \bigoplus_{q\in\mathcal J}
\mathscr A_{0,q}^K(\lambda_i)
\right) \mathbb W_{\lambda_i}.
\]
Expanding the definition of the local spectral flows $\mathbb W_{\lambda_i}$ gives the formal fixed-point dimensions
\begin{align*}
\dim_\bc \left(\mathbb W_{\mathcal U}(K) \right)^K
&=
\sum_{(\lambda_i,0)\in\mathcal U\cap\Lambda}
\operatorname{sign}\det_\br \left(\bigoplus_{q\in\mathcal J}
\mathscr A_{0,q}^K(\lambda_i) \right)
\dim_\bc\mathbb W_{\lambda_i}^K \\
&=
\sum_{m\geq 1}\sum_{j\in\mathcal J}
\left( \sum_{(\lambda_i,0)\in\mathcal U\cap\Lambda} \operatorname{sign}\det_\br
\left( \bigoplus_{q\in\mathcal J}
\mathscr A_{0,q}^K(\lambda_i) \right)
\rho_{m,j}(\lambda_i) \right)
\dim_\bc\mathcal V_{m,j}^K .
\end{align*}

\begin{theorem}\label{thm:macroscopic_escape}
Let $\mathcal U\subset\bc\times\mathscr H$ be open, bounded and
$G$-invariant set with $\partial\mathcal U\cap\Lambda=\emptyset$ and
$\mathcal U\cap\Lambda=\{(\lambda_1,0),\ldots,(\lambda_N,0)\}$ consisting only of dynamic critical points. If $(H)\in\Phi_1^t(G)$ is a maximal orbit type in the unsigned aggregate support $\mathbb S_{\mathcal U}$ and
\[
\dim_\bc
\left(
\mathbb W_{\mathcal U}(H)
\right)^H
\neq 0,
\]
then there exists a maximal connected component $\mathscr C\subset\overline{\mathscr S}$ such that $\mathscr C\cap\partial\mathcal U\neq\emptyset$.
\end{theorem}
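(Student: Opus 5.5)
The plan is to reduce the claim to Corollary~\ref{cor:macro_escape}. It suffices to show that the aggregate invariant $\sum_{i=1}^N \omega_G(\lambda_i)\in A_1^t(G)$ has a non-zero $(H)$-coefficient. Since coefficients are additive, we need
\[
\operatorname{coeff}^H\Big(\sum_{i=1}^N\omega_G(\lambda_i)\Big)=\sum_{i=1}^N \operatorname{coeff}^H(\omega_G(\lambda_i))\neq 0 .
\]
We compute each summand through the recurrence formula~\eqref{def:twisted_degree_recurrence}, as in the proof of Theorem~\ref{thm:generalized_guarantee}. We cannot apply Theorem~\ref{thm:generalized_guarantee} as stated, for two reasons: its nonvanishing hypothesis may fail at individual points, and $(H)$ need not lie in $\operatorname{supp}(\mathbb W_{\lambda_i})$ for every $i$. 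So we re-run only its two steps, which do not use the nonvanishing hypothesis.

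The first step is to check that maximality in the aggregate support kills all higher contributions at every critical point. Each $\operatorname{supp}(\mathbb W_{\lambda_i})$ is a subrepresentation of $\mathbb S_{\mathcal U}$. Let $(L)>(H)$ in $\Phi_1^t(G)$. Maximality of $(H)$ in $\mathbb S_{\mathcal U}$ forces $\mathbb S_{\mathcal U}^L$ to contain no vector of twisted isotropy, and hence to vanish, by the same argument used in Step~2 of Theorem~\ref{thm:generalized_guarantee}: all modes have $m\ge1$, so any nonzero $L$-fixed vector would have an isotropy in $\Phi_1^t$ at least $(L)$. It follows that $\dim_\bc\mathcal V_{m,j}^L=0$ for every active $(m,j)$ of every $\mathbb W_{\lambda_i}$. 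The cumulative identity~\eqref{eq:cumulative_identity} then gives $\deg_{S^1}(\mathscr A_\Theta[\bn,\mathcal J]^L,\mathscr O[\bn,\mathcal J]^L)=0$ at each $\lambda_i$. By the product factorization~\eqref{eq:product_factorization}, the full cumulative degree on $\mathscr O^L$ also vanishes. Downward induction on the isotropy lattice, exactly as before, gives $\operatorname{coeff}^L(\omega_G(\lambda_i))=0$ for all $(L)>(H)$ and all $i$.

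The second step is to collapse the recurrence at $(H)$. For each $i$ it reduces to
\[
\operatorname{coeff}^H(\omega_G(\lambda_i))=\frac{\deg_{S^1}(\mathscr A_\Theta^H,\mathscr O^H)}{|W(H)/S^1|}=\operatorname{sign}\det_\br\Big(\bigoplus_{q}\mathscr A^H_{0,q}(\lambda_i)\Big)\frac{\dim_\bc\mathbb W_{\lambda_i}^H}{|W(H)/S^1|}.
\]
This formula holds even when $\dim_\bc\mathbb W_{\lambda_i}^H=0$, in which case both sides vanish. The sign factor is $\pm1$ because each $\lambda_i$ is dynamic. Summing over $i$ and using the expansion of $\dim_\bc(\mathbb W_{\mathcal U}(H))^H$ displayed before the theorem yields
\[
\operatorname{coeff}^H\Big(\sum_i\omega_G(\lambda_i)\Big)=\frac{\dim_\bc(\mathbb W_{\mathcal U}(H))^H}{|W(H)/S^1|}\neq0.
\]
Corollary~\ref{cor:macro_escape} then produces the escaping component.

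The main obstacle is the first step. It has to be justified carefully that maximality in the aggregate support, rather than in each local support, suffices for the vanishing of all higher fixed-point dimensions at each individual $\lambda_i$. The point to verify is that "maximal in $\mathbb S_{\mathcal U}$" is interpreted so that no $(L)>(H)$ in $\Phi_1^t(G)$ has $\mathbb S_{\mathcal U}^L\neq0$. The worry is that some nonzero $L$-fixed vector could have isotropy strictly above $L$ that is not twisted. I expect to rule this out because every such isotropy contains $H$ and lies in $\Phi(G;\mathbb S_{\mathcal U})$, which contradicts maximality. Orbit types not of the form $\Phi_1^t$ do not appear in the recurrence at all.
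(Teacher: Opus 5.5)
Your proof is correct and is essentially the paper's argument. The paper sums the local recurrences first and runs the downward induction on the aggregate coefficients $C_{\mathcal U}(L)=\sum_i \operatorname{coeff}^L(\omega_G(\lambda_i))$, whereas you run the induction separately at each $\lambda_i$ (obtaining the slightly stronger vanishing of every local coefficient above $(H)$) and sum afterwards, using the same ingredients: $\mathbb S_{\mathcal U}^L=\{0\}$ for $(L)>(H)$, the identity \eqref{eq:cumulative_identity} with the sign factorization \eqref{eq:product_factorization}, and Corollary~\ref{cor:macro_escape}.
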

\begin{proof}
For each $(K)\in\Phi_1^t(G)$, put
\[
C_{\mathcal U}(K)
:=
\sum_{(\lambda_i,0)\in\mathcal U\cap\Lambda}
\operatorname{coeff}^K(\omega_G(\lambda_i)).
\]
Summing the local recurrence formula for the twisted equivariant degree gives
\[
C_{\mathcal U}(K)
=
\frac{
\dim_\bc \left(\mathbb W_{\mathcal U}(K) \right)^K - \sum_{(L)>(K)}
n(K,L)|W(L)/S^1|\,C_{\mathcal U}(L)}
{|W(K)/S^1|}.
\]
Since $(H)$ is maximal in the support of $\mathbb S_{\mathcal U}$, one has $\mathbb S_{\mathcal U}^L=\{0\}$ for every $(L)>(H)$. Every signed aggregate spectral flow $\mathbb W_{\mathcal U}(L)$ is supported inside $\mathbb S_{\mathcal U}$, so
\[
\dim_\bc \left( \mathbb W_{\mathcal U}(L)
\right)^L =0
\qquad \text{for all } (L)>(H).
\]
Downward induction over the orbit-type partial order therefore gives $C_{\mathcal U}(L)=0$ for all $(L)>(H)$. Evaluating the recurrence at $(H)$ yields
\[
C_{\mathcal U}(H)
= \frac{ \dim_\bc \left(\mathbb W_{\mathcal U}(H) \right)^H} {|W(H)/S^1|}
= \operatorname{coeff}^H \left( \sum_i\omega_G(\lambda_i) \right) \neq 0,
\]
where the final inequality follows from the hypothesis $\dim_\bc(\mathbb W_{\mathcal U}(H))^H\neq0$. The conclusion follows from Corollary~\ref{cor:macro_escape}.
\end{proof}
When the eigenvalue dependence is holomorphic, the aggregate flow representation simplifies further. To state this precisely, we introduce the \emph{aggregate dynamic generalized kernel}:
\[
\mathbb K_{\mathcal U}
:=
\bigoplus_{(\lambda_i,0)\in\mathcal U\cap\Lambda} \bigoplus_{m=1}^{\infty} \bigoplus_{j\in\mathcal J} d_{m,j}(\lambda_i)\mathcal V_{m,j}.
\]
\begin{corollary}\label{cor:holomorphic_escape}
Let $\mathcal U\subset\bc\times\mathscr H$ be open, bounded and $G$-invariant, with $\partial\mathcal U\cap\Lambda=\emptyset$, and suppose $\mathcal U\cap\Lambda$ consists of finitely many dynamic critical points. Assume that the critical eigenvalues of the dynamic blocks $\mathscr A_{m,j}(\lambda)$ depend holomorphically on $\lambda$ near each critical point in $\mathcal U\cap\Lambda$. 
If a twisted orbit type $(H)\in\Phi_1^t(G)$ is maximal in the support of $\mathbb K_{\mathcal U}$, and if the stationary sign
\[
\operatorname{sign} \det_\br
\left(\bigoplus_{j\in\mathcal J} \mathscr A_{0,j}^H(\lambda_i) \right)
\]
is constant over all $(\lambda_i,0)\in\mathcal U\cap\Lambda$ satisfying $\dim_\bc\mathbb W_{\lambda_i}^H>0$, then there exists a maximal connected component $\mathscr C\subset\overline{\mathscr S}$ such that
$\mathscr C\cap\partial\mathcal U\neq\emptyset$.
\end{corollary}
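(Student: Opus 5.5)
The plan is to reduce Corollary~\ref{cor:holomorphic_escape} to Theorem~\ref{thm:macroscopic_escape}. Two things must be checked: that $(H)$ is maximal in the unsigned aggregate support $\mathbb S_{\mathcal U}$, and that the $H$-signed formal dimension $\dim_\bc(\mathbb W_{\mathcal U}(H))^H$ is non-zero.

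\textbf{Step 1: identify the supports.} At each critical point $(\lambda_i,0)\in\mathcal U\cap\Lambda$, Lemma~\ref{lem:holomorphic_kernel} applies. It shows that $\mathbb W_{\lambda_i}$ is an actual representation, that $\rho_{m,j}(\lambda_i)\ge 0$, and that $\operatorname{supp}(\mathbb W_{\lambda_i})$ is exactly the dynamic generalized kernel at $\lambda_i$. Taking direct sums over $i$, the isotypic modes of $\mathbb S_{\mathcal U}$ and of $\mathbb K_{\mathcal U}$ coincide; only the multiplicities may differ. Fixed-point spaces $(\cdot)^L$ are nonzero for the same $L$ in both, so the orbit-type sets $\Phi(G;\mathbb S_{\mathcal U})$ and $\Phi(G;\mathbb K_{\mathcal U})$ agree. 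Hence $(H)$ is maximal in $\mathbb S_{\mathcal U}$.

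\textbf{Step 2: non-vanishing of the signed dimension.} Write
\[
\dim_\bc(\mathbb W_{\mathcal U}(H))^H=\sum_i s_i\,\dim_\bc\mathbb W_{\lambda_i}^H, \qquad s_i:=\operatorname{sign}\det_\br\Bigl(\bigoplus_{j\in\mathcal J}\mathscr A_{0,j}^H(\lambda_i)\Bigr)\in\{\pm1\}.
\]
The signs $s_i$ are $\pm1$ because each critical point is dynamic. By holomorphy every $\rho_{m,j}(\lambda_i)\ge0$, so each $\dim_\bc\mathbb W_{\lambda_i}^H\ge 0$, and the terms with $\dim_\bc\mathbb W_{\lambda_i}^H=0$ drop out. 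On the remaining terms, the hypothesis gives a common sign $s$, so the sum equals $s\sum_i\dim_\bc\mathbb W_{\lambda_i}^H$.

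It remains to show that at least one term is strictly positive. Since $(H)$ is an orbit type of $\mathbb K_{\mathcal U}$, some mode $\mathcal V_{m,j}$ with $d_{m,j}(\lambda_i)>0$ for some $i$ has $\mathcal V_{m,j}^H\ne0$. By Lemma~\ref{lem:holomorphic_kernel}, $\rho_{m,j}(\lambda_i)>0$ for that mode, so $\dim_\bc\mathbb W_{\lambda_i}^H>0$, as in the proof of Corollary~\ref{cor:holomorphic_universal_guarantee}. Therefore the sum is non-zero, and Theorem~\ref{thm:macroscopic_escape} gives the conclusion.

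\textbf{Expected obstacle.} The delicate point is Step 1. One has to make sure that "maximal in the support" is read through geometric orbit types, so that the different multiplicities in $\mathbb S_{\mathcal U}$ and $\mathbb K_{\mathcal U}$ are irrelevant. One also has to confirm that the isotypic-mode sets agree; this is exactly what the support equality in Lemma~\ref{lem:holomorphic_kernel} supplies. Everything else is a positivity count.
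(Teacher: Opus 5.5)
Your proposal is correct and follows the paper's proof essentially step for step: Lemma~\ref{lem:holomorphic_kernel} transfers maximality of $(H)$ from $\mathbb K_{\mathcal U}$ to $\mathbb S_{\mathcal U}$, holomorphy together with the constant-sign hypothesis gives $\dim_\bc(\mathbb W_{\mathcal U}(H))^H=\pm\sum_i\dim_\bc\mathbb W_{\lambda_i}^H\neq 0$, and Theorem~\ref{thm:macroscopic_escape} concludes. One minor correction to Step~1: the full orbit-type sets $\Phi(G;\mathbb S_{\mathcal U})$ and $\Phi(G;\mathbb K_{\mathcal U})$ need not coincide when multiplicities differ, because several copies of $\mathcal V_{m,j}$ can realize intersections of isotropy groups that a single copy does not; however, maximal orbit types depend only on which fixed-point spaces $(\cdot)^L$ are nonzero, which is exactly what you verified, so your conclusion stands.
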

\begin{proof}
By Lemma~\ref{lem:holomorphic_kernel}, each local spectral flow $\mathbb W_{\lambda_i}$ is an actual $G$-representation, and its support coincides with the support of the dynamic generalized kernel of $\mathscr A(\lambda_i)$. Hence the aggregate dynamic support $\mathbb S_{\mathcal U}$ agrees with the support of $\mathbb K_{\mathcal U}$. In particula, the maximality of $(H)$ in the support of $\mathbb K_{\mathcal U}$ implies its maximality in the support of $\mathbb S_{\mathcal U}$. Thus Theorem~\ref{thm:macroscopic_escape} reduces the proof to showing that
\[
\dim_\bc
\left(
\mathbb W_{\mathcal U}(H)
\right)^H
\neq 0.
\]
By holomorphy, all winding numbers $\rho_{m,j}(\lambda_i)$ are non-negative, and $\rho_{m,j}(\lambda_i)>0$ precisely on the active dynamic kernel components. Therefore every quantity $\dim_\bc\mathbb W_{\lambda_i}^H$ is non-negative, and at least one is strictly positive because $(H)$ occurs in the aggregate kernel.

By the assumed constancy of the stationary $H$-sign on the nonzero $H$-fixed contributions, the signed formal dimension is either the positive sum $\sum_{(\lambda_i,0)\in\mathcal U\cap\Lambda} \dim_\bc\mathbb W_{\lambda_i}^H$ or the negative of this quantity. In either case it is nonzero:
\[
\dim_\bc \left( \mathbb W_{\mathcal U}(H) \right)^H
=
\sum_{(\lambda_i,0)\in\mathcal U\cap\Lambda}
\operatorname{sign}\det_\br \left(
\bigoplus_{q\in\mathcal J} \mathscr A_{0,q}^H(\lambda_i) \right) \dim_\bc\mathbb W_{\lambda_i}^H \neq 0.
\]
The conclusion follows from Theorem~\ref{thm:macroscopic_escape}.
\end{proof}
\subsubsection{Spatio-Temporal Fixed-Point Reduction}
The abstract bifurcation framework of Sections~\ref{sec:bif_framework}--\ref{sec:spectral_flow} is formulated for a general product group $G = S^1 \times \Gamma$, without specifying the origin of the $S^1$ factor. In practice, the $S^1$-symmetry frequently arises from the time-shift invariance of periodic orbits: if $x(t)$ is a $p$-periodic solution of an autonomous system, then so is $x(t + \tau)$ for any phase shift $\tau \in S^1 \simeq \br / p\bz$. In this setting, any constant (equilibrium) solution $x(t) \equiv x_0$ is trivially $p$-periodic for every $p > 0$ and is fixed by the entire 
$S^1$-action. Such equilibria therefore appear as spurious stationary bifurcating branches, obscuring the genuinely time-dependent dynamics one seeks to detect. Moreover, their presence introduces the stationary determinant sign into the coefficient formula~\eqref{eq:universal_formula}, which can vary across critical points and obstruct cancellation-free global guarantees.
\vs
Both issues are resolved simultaneously by restricting to a fixed-point subspace that does not admit the $m = 0$ Fourier modes in its $G$-isotypic decomposition. When the symmetry group admits a spatial involution, a spatio-temporal anti-periodic symmetry provides exactly such a filter: it retains only the odd-harmonic modes ($m = 1, 3, 5, \ldots$), ensuring that every nontrivial solution in the 
reduced space is genuinely time-dependent, and that the escape criterion reduces to a pure dimension condition on the spectral flow---with no sign verification required.
\vs
\noi\textbf{The anti-periodic subgroup.}
Assume the full symmetry group admits a spatial involution, allowing us to write $G = S^1 \times \Gamma \times \bz_2$, and define the spatio-temporal subgroup
\[
\bm{H} := \langle (-1, e_\Gamma, -1) \rangle \leq G,
\]
generated by the simultaneous half-period time shift and spatial sign reversal. Consider the $\bm H$-fixed point subspace $\mathscr H^{\bm H} := \{ u \in \mathscr H: h u = u \; \forall_{h \in \bm H} \}$ and the restrictions 
$\mathscr F^{\bm H} := \mathscr F|_{\br \times \br \times \mathscr H^{\bm H}}$ and $\mathscr A^{\bm H} := \mathscr A|_{\br \times \br \times 
\mathscr H^{\bm H}}$. Any solution $(\alpha,\beta,u) \in \br \times \br \times \mathscr H^{\bm H}$ to the $\bm H$-fixed operator equation
\begin{align} \label{eq:H_fixed_bif_problem}
    \mathscr F^{\bm H}(\alpha,\beta,u) = 0,
\end{align}
is also a solution to~\eqref{eq:bif_problem}.
\vs
\noi\textbf{Isotypic decomposition of $\mathscr H^{\bm H}$ and the exclusion of stationary solutions.}
Since $\bm{H}$ is central in $G$ (the $\bz_2$ factor is direct), its normalizer is $N(\bm{H}) = G$ and the residual symmetry group is $\bm{G} := W(\bm{H}) = G/\bm{H} \simeq S^1 \times \Gamma$. If $\mathcal V_j^-$ denotes the irreducible $\Gamma \times \bz_2$-representation equipped with the antipodal 
$\bz_2$-action, the generator $(-1, e_\Gamma, -1) \in \bm{H}$ acts on $\mathcal U_m \otimes \mathcal V_j^-$ by the phase $(-1)^m(-1)$. This equals $+1$ precisely when $m$ is odd, so the 
$\bm H$-fixed point space admits the $\bm G$-isotypic decomposition
\[
\mathscr H^{\bm H} = \overline{\bigoplus_{m \in 2 \bn -1} \bigoplus_{j \in \mathcal J} \mathscr H_{m,j}}, \quad \mathscr H_{m,j} \simeq \ell_{m,j} \mathcal V_{m,j}.
\]
Notice that every connected component of nontrivial solutions to the $\bm{H}$-fixed problem consists entirely of genuinely time-dependent orbits. Indeed, an element $u \in \mathscr H$ is $S^1$-invariant if and only if $u \in \bigoplus_j \mathscr H_{0,j}$, so any nontrivial solution in $\mathscr H^{\bm H}$ is not fixed by $S^1$. Moreover, any trivial solution $(\lambda,0)$ at which $\mathscr A^{\bm H}(\lambda)$ is singular is automatically a dynamic critical point of the original equation~\eqref{eq:bif_problem}.
\vs
To ensure the local bifurcation invariants are well-defined in this reduced setting, we replace assumption \ref{b0} with the following:
\begin{enumerate} [label=($B_1$)] 
    \item\label{b1} the $\bm H$-fixed critical set $\Lambda^{\bm H} := \Lambda \cap \bc \times \mathscr H^{\bm H}$ is nonempty and discrete.
\end{enumerate}
Given an isolating cylinder $\mathscr O^{\bm H}$, a linear complemented operator $\mathscr A_\Theta^{\bm H}$, and an auxiliary function $\Theta$ (as defined as in Section~\ref{sec:bif_framework}, now adapted to the $\bm H$-fixed setting), we define the $\bm H$-fixed local bifurcation invariant at an isolated critical point $(\lambda_0,0) \in \Lambda^{\bm H}$ by
\[
\omega_{\bm G}(\lambda_0) := \bm G\text{-deg}
(\mathscr A_{\Theta}^{\bm H}, \mathscr O^{\bm H}) \in \Phi_1^t(\bm G).
\]
Since the generalized kernel of $\mathscr{A}^{\bm{H}}(\lambda_0)$ lives entirely in $\mathscr{H}^{\bm{H}}$, the \emph{$\bm H$-fixed virtual spectral flow representation} in $RO(\bm{G})$ only admits active modes with odd $S^1$-isotypic indices:
\begin{equation*} 
\mathbb{W}_{\lambda_0}^{\bm{H}} := \sum_{m \in 2\bn-1} \sum_{j \in \mathcal{J}} \rho_{m,j}(\lambda_0) \mathcal{V}_{m,j}.
\end{equation*}
The arguments used in Theorem \ref{thm:generalized_guarantee} apply directly to this reduced space. Since the temporal fixed-point reduction eliminates all stationary modes ($m=0$), the stationary linearization restricts to the origin $\{0\}$ in $\mathscr{H}^{\bm{H}}$. Consequently, the stationary determinant evaluates to $+1$, and the coefficient formula~\eqref{eq:universal_formula} collapses to the formal dimension divided by $|W(H)/S^1|$, with no multiplicative sign factor.

\begin{corollary}\label{cor:spatiotemporal_guarantee}
Under the assumption \ref{b1}, let $(\lambda_0,0) \in\Lambda^{\bm H}$. Suppose $(H) \in \Phi_1^t(\bm{G})$ is a \textbf{maximal} orbit type in the support of the $\bm{H}$-fixed virtual spectral flow representation $\mathbb{W}_{\lambda_0}^{\bm{H}}$. If the formal complex dimension of the $H$-fixed subspace is non-zero, i.e.,
\begin{equation*}
\dim_\bc (\mathbb{W}_{\lambda_0}^{\bm{H}})^H := \sum_{m \in 2\bn-1} \sum_{j \in \mathcal{J}} \rho_{m,j}(\lambda_0) \dim_\bc \mathcal{V}_{m,j}^H \neq 0,
\end{equation*}
then the $\bm{H}$-fixed local bifurcation invariant admits a strictly non-zero coefficient for $(H)$, given explicitly by the formula:
\begin{equation*} 
\operatorname{coeff}^H(\omega_{\bm{G}}(\lambda_0)) = \sum_{m \in 2\bn-1} \sum_{j \in \mathcal{J}} \rho_{m,j}(\lambda_0) \frac{\dim_\bc \mathcal{V}_{m,j}^H}{|W(H)/S^1|} \neq 0.
\end{equation*}
\end{corollary}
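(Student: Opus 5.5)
The plan is to rerun the proof of Theorem~\ref{thm:generalized_guarantee} verbatim for the reduced problem \eqref{eq:H_fixed_bif_problem}, treating $\mathscr H^{\bm H}$ as the phase space, $\bm G = G/\bm H \simeq S^1\times\Gamma$ as the symmetry group, and $\mathscr A^{\bm H}(\lambda)$ as the linearization. First I will check that the abstract framework of Section~\ref{sec:bif_framework} transfers. The space $\mathscr H^{\bm H}$ is a closed $G$-invariant subspace, invariant because $\bm H$ is central. It is therefore an isometric Banach $\bm G$-representation, and $\mathscr F^{\bm H}$ is a completely continuous $\bm G$-equivariant field. Assumption \ref{b1} makes $(\lambda_0,0)$ isolated in $\Lambda^{\bm H}$, so Lemma~\ref{lem:linearization_homotopy} applies on an isolating cylinder $\mathscr O^{\bm H}$. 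This shows that $\omega_{\bm G}(\lambda_0)$ is well defined and is computed by the recurrence formula \eqref{def:twisted_degree_recurrence} for $\bm G$.

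Next I will record the isotypic structure. By the computation preceding the corollary, $\mathscr H^{\bm H}$ contains only the modes $\mathscr H_{m,j}$ with $m$ odd. In particular its stationary part $\bigoplus_j \mathscr H_{0,j}\cap \mathscr H^{\bm H}$ is $\{0\}$. Two consequences follow. The critical point $(\lambda_0,0)$ is dynamic for the reduced problem. The stationary factor in \eqref{eq:product_factorization} is the determinant of the operator on the zero space, which I take to be $+1$; equivalently, the product property (Lemma~\ref{lemm:s1_degree_product_property}) is applied with a trivial stationary summand. The block decomposition of $\mathscr A^{\bm H}(\lambda)$ is the restriction of that of $\mathscr A(\lambda)$ to the odd $m$. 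The eigenvalues $\mu_{m,j,k}$ and winding numbers $\rho_{m,j}(\lambda_0)$ are therefore unchanged, and the spectral flow of the reduced problem is exactly $\mathbb W^{\bm H}_{\lambda_0}$.

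With this in place, Step~1 of the proof of Theorem~\ref{thm:generalized_guarantee} applies without change, with the index set $\bn$ replaced by $2\bn-1$. For every $(L)\in\Phi_1^t(\bm G)$, the Splitting Lemma, the determinantal reduction, and the factorization $\det_\bc(\mathscr A^L_{m,j}) = (\prod_k\mu_{m,j,k})^{\dim_\bc\mathcal V^L_{m,j}}$ give
\[
\deg_{S^1}\big((\mathscr A_\Theta^{\bm H})^L,(\mathscr O^{\bm H})^L\big)=\dim_\bc(\mathbb W^{\bm H}_{\lambda_0})^L.
\]
Step~2 then runs as before. Maximality of $(H)$ in $\operatorname{supp}(\mathbb W^{\bm H}_{\lambda_0})$ forces $\dim_\bc\mathcal V^L_{m,j}=0$ on the support for every $(L)>(H)$. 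Downward induction then kills all higher coefficients, the recurrence collapses at $(H)$, and we obtain the stated formula with sign $+1$. This is nonzero because the formal dimension is nonzero by hypothesis.

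The main obstacle is the group-theoretic bookkeeping of the passage to $\bm G$. Orbit types, Weyl groups $W(H)$, and the normalization $|W(H)/S^1|$ must be read in $\bm G$. The $S^1$ factor of $\bm G$ is the quotient $S^1/\{\pm1\}$, and it acts on an odd mode $\mathcal U_m$ with a reindexed folding. I will handle this by identifying $\bm G\simeq S^1\times\Gamma$ via the isomorphism $S^1/\{\pm1\}\to S^1$ together with the isomorphism $\Gamma\times\bz_2/\langle\cdot\rangle\to\Gamma$. I will then check that this only relabels the cyclic generators $(\bz_m)$ in the determinantal reduction formula. The cumulative $S^1$-degree sums all cyclic coefficients, so it is insensitive to this relabeling, and the identity above survives intact.
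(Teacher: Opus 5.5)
Your proposal is correct and follows the paper's own route. The paper likewise reruns the proof of Theorem~\ref{thm:generalized_guarantee} on $\mathscr H^{\bm H}$ with the residual group $\bm G$, and notes that the absence of $m=0$ modes makes the stationary determinant factor $+1$.

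The only slip is in your bookkeeping paragraph. The nontrivial element of $\bm H$ is $(-1,e_\Gamma,-1)$, so the circle $S^1\times\{e_\Gamma\}\times\{1\}$ meets $\bm H$ trivially. Hence $(z,\gamma)\mapsto[(z,\gamma,1)]$ is already an isomorphism $S^1\times\Gamma\to\bm G$, under which $S^1$ acts on each odd mode $\mathcal U_m$ with its original $m$-folding. There is no quotient circle $S^1/\{\pm1\}$ (which could not act on odd modes anyway) and no relabeling of the generators $(\bz_m)$ to handle.
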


\subsubsection{The Aggregate Spectral Flow}
Corollary~\ref{cor:spatiotemporal_guarantee} is a local guarantee at a single, isolated $\bm H$-fixed critical point. We now aggregate over all critical points in a bounded parameter domain to obtain a global escape criterion in the reduced setting.
\vs
Let $\mathcal{U} \subset \bc \times \mathscr{H}^{\bm{H}}$ be an open, bounded, $\bm{G}$-invariant set such that its boundary contains no critical points (i.e., $\partial \mathcal{U} \cap \Lambda^{\bm{H}} = \emptyset$). 
To capture the total topological charge within this boundary, we define the \emph{aggregate virtual spectral flow} over $\mathcal{U}$ as the formal sum of the localized $\bm H$-fixed virtual flows in $RO(\bm{G})$:
\begin{equation*}
\mathbb{W}_{\mathcal{U}} := \sum_{(\lambda_i,0) \in \Lambda^{\bm H} \cap \,\mathcal U} \mathbb{W}_{\lambda_i}^{\bm H} = \sum_{m \in 2 \bn - 1} \sum_{j \in \mathcal{J}} \left( \sum_{(\lambda_i,0) \in \Lambda^{\bm H} \cap \,\mathcal U} \rho_{m,j}(\lambda_i) \right) \mathcal{V}_{m,j}.
\end{equation*}
Under the assumption \ref{b1}, $\mathbb{W}_{\mathcal{U}}$ is a well-defined virtual $\bm G$-representation. Since the $\bm H$-fixed operator equation~\eqref{eq:H_fixed_bif_problem} is defined with respect to a completely continuous $\bm G$-equivariant field on $\mathscr H^{\bm H}$, the Rabinowitz alternative (Theorem~\ref{thm:rab_alt}) and Corollary~\ref{cor:macro_escape} apply \emph{mutatis mutandis} to the restricted problem, with the symmetry group $\bm G$, the phase space $\mathscr H^{\bm H}$, and the bifurcation invariant $\omega_{\bm G}$, in place of $G$, $\mathscr H$ and $\omega_G$.
\begin{theorem}\label{thm:spatio_temporal_macroscopic_escape}
Suppose that $\Lambda^{\bm H}$ is discrete and let $\mathcal{U} \subset \bc \times \mathscr{H}^{\bm{H}}$ be an open, bounded, $\bm{G}$-invariant set satisfying $\partial \mathcal{U} \cap \Lambda^{\bm{H}} = \emptyset$. If $(H) \in \Phi_1^t(\bm{G})$ is a \textbf{maximal} orbit type in the support of the aggregate virtual spectral flow $\mathbb{W}_{\mathcal{U}}$ and the formal dimension of its aggregate fixed-point space is non-zero, i.e.,
\[
\dim_\bc (\mathbb{W}_{\mathcal{U}})^H = \sum_{(\lambda_i,0) \in \mathcal U \cap \Lambda^{\bm H}} \left( \sum_{m \in 2\bn - 1} \sum_{j \in \mathcal{J}}  \rho_{m,j}(\lambda_i) \dim_\bc \mathcal{V}_{m,j}^H \right) \neq 0,
\]
then there exists a branch of solutions $\mathscr C$ emerging from the trivial branch at some critical point inside $\mathcal U$ that intersects the boundary: $\mathscr C \cap \partial \mathcal U \neq \emptyset$.
\end{theorem}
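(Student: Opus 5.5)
The plan is to run the argument of Theorem~\ref{thm:macroscopic_escape} in the reduced setting, where the absence of stationary modes removes the sign weights. For each $(K)\in\Phi_1^t(\bm G)$ put
\[
C_{\mathcal U}(K) := \sum_{(\lambda_i,0)\in\mathcal U\cap\Lambda^{\bm H}} \operatorname{coeff}^K(\omega_{\bm G}(\lambda_i)).
\]
The finiteness of this sum comes from assumption \ref{b1}: $\Lambda^{\bm H}$ is discrete, and its intersection with the bounded set $\mathcal U$, whose closure avoids critical points on $\partial\mathcal U$, is finite. I would then apply the recurrence formula \eqref{def:twisted_degree_recurrence} at each critical point and sum over $i$.

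By Step~1 of the proof of Theorem~\ref{thm:generalized_guarantee}, applied in $\mathscr H^{\bm H}$ with $\bm G$ in place of $G$, the cumulative $S^1$-degree on the $K$-fixed space at $\lambda_i$ equals $\dim_\bc(\mathbb W^{\bm H}_{\lambda_i})^K$. No stationary factor appears: since $\mathscr H^{\bm H}$ contains only odd $m$, the product factorization \eqref{eq:product_factorization} has trivial stationary part, which contributes sign $+1$. This is exactly the observation behind Corollary~\ref{cor:spatiotemporal_guarantee}. Summing over $i$ and using linearity of the formal dimension gives
\[
C_{\mathcal U}(K) = \frac{\dim_\bc(\mathbb W_{\mathcal U})^K - \sum_{(L)>(K)} n(K,L)\,|W(L)/S^1|\,C_{\mathcal U}(L)}{|W(K)/S^1|}.
\]

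Next I would show that every orbit type strictly above $(H)$ contributes zero. Since $(H)$ is maximal in $\operatorname{supp}(\mathbb W_{\mathcal U})$, for any $(L)>(H)$ the fixed space $\operatorname{supp}(\mathbb W_{\mathcal U})^L$ must be zero; otherwise $(H)$ would not be maximal, because all active modes have $m\ge1$ and so none is fixed by all of $\bm G$. Hence $\dim_\bc \mathcal V_{m,j}^L=0$ for every active index $(m,j)$, and therefore $\dim_\bc(\mathbb W_{\mathcal U})^L=0$.

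This is the step I expect to be the main obstacle. The aggregate support can be strictly smaller than the union of the local supports, because winding numbers from different critical points may cancel. So a mode absent from $\mathbb W_{\mathcal U}$ can still be present at an individual $\lambda_i$, and there $\dim_\bc(\mathbb W^{\bm H}_{\lambda_i})^L$ need not vanish. The way around this is that the recurrence is linear with universal coefficients $n(K,L)$ and $|W(\cdot)/S^1|$. Summing the recurrences therefore expresses $C_{\mathcal U}(K)$ purely through the aggregate formal dimensions, and those vanish for every $(L)>(H)$, for all such $(L)$ simultaneously. Downward induction over the finitely many relevant orbit types above $(H)$ then gives $C_{\mathcal U}(L)=0$. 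For the induction to start, the orbit types in the upper set must either lie outside $\Phi(\bm G;\operatorname{supp}\mathbb W_{\mathcal U})$ or have zero aggregate dimension; since $\dim_\bc(\mathbb W_{\mathcal U})^L=0$ for every $(L)>(H)$, this holds.

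With the higher terms gone, the recurrence at $(H)$ collapses to
\[
C_{\mathcal U}(H)=\frac{\dim_\bc(\mathbb W_{\mathcal U})^H}{|W(H)/S^1|}\neq 0.
\]
This shows that $\sum_i\omega_{\bm G}(\lambda_i)\neq0$. Corollary~\ref{cor:macro_escape}, applied mutatis mutandis to the $\bm G$-equivariant problem \eqref{eq:H_fixed_bif_problem} on $\mathscr H^{\bm H}$, then yields a maximal connected component $\mathscr C$ of nontrivial solutions meeting $\partial\mathcal U$. Its proof shows that $\mathscr C$ contains a critical point in $\mathcal U$ with nonzero invariant, so $\mathscr C$ emerges from the trivial branch inside $\mathcal U$. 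Since solutions of \eqref{eq:H_fixed_bif_problem} are solutions of \eqref{eq:bif_problem}, this gives the branch claimed in the theorem.
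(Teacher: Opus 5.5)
Your proposal is correct and follows the paper's route. Because the stationary signs are trivially $+1$ in $\mathscr H^{\bm H}$, the summed recurrence from the proof of Theorem~\ref{thm:macroscopic_escape} depends only on the aggregate dimensions $\dim_\bc(\mathbb W_{\mathcal U})^K$; maximality then kills every term above $(H)$, and Corollary~\ref{cor:macro_escape}, adapted to $\bm G$, finishes the argument. Your remark that cancellation between critical points is harmless, since the recurrence is linear with universal coefficients, is exactly what justifies using maximality in $\operatorname{supp}(\mathbb W_{\mathcal U})$ rather than in the unsigned support $\mathbb S_{\mathcal U}$; the paper's terse proof leaves this point implicit.
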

\begin{proof}
In the $\bm H$-fixed problem the $m=0$ sector is absent, so the stationary determinant signs in $\dim_\bc \left(\mathbb W_{\mathcal U}(K) \right)^K$ are all $+1$. Hence $\dim_\bc \left(\mathbb W_{\mathcal U}(K) \right)^K=\dim_\bc(\mathbb W_{\mathcal U})^K$ for all orbit types $(K) \in \Phi_1^t(\bm G)$.
Maximality of $(H)$ in the support of $\mathbb W_{\mathcal U}$ gives $\dim_\bc \left(\mathbb W_{\mathcal U}(L) \right)^L=0$ for all $(L)>(H)$, while the hypothesis gives $\dim_\bc \left(\mathbb W_{\mathcal U}(H) \right)^H\neq0$ such that Corollary \ref{cor:macro_escape} applies (adapted to the $\bm H$-fixed-point setting).
\end{proof}

\begin{corollary}\label{cor:spatio_temporal_holomorphic_escape}
Suppose that $\Lambda^{\bm H}$ is discrete and let $\mathcal{U} \subset \bc \times \mathscr{H}^{\bm{H}}$ be an open, bounded, $\bm{G}$-invariant set satisfying $\partial \mathcal{U} \cap \Lambda^{\bm{H}} = \emptyset$. If the non-stationary eigenvalues of $\mathscr{A}^{\bm{H}}(\lambda)$ are holomorphic in $\mathcal{U}$ and $(H) \in \Phi_1^t(\bm{G})$ is a \textbf{maximal} orbit type in the aggregate generalized kernel
\[
\bigoplus_{(\lambda_i,0) \in \mathcal{U} \cap \Lambda^{\bm H}} \ker\nolimits_{gen}\mathscr{A}^{\bm{H}}(\lambda_i),
\]
then there exists a branch of solutions $\mathscr C$ emerging from the trivial branch at some critical point inside $\mathcal{U}$ that intersects the boundary $\partial \mathcal{U}$.
\end{corollary}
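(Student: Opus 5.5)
The plan is to reduce Corollary~\ref{cor:spatio_temporal_holomorphic_escape} to Theorem~\ref{thm:spatio_temporal_macroscopic_escape}, following the pattern used to derive Corollary~\ref{cor:holomorphic_escape} from Theorem~\ref{thm:macroscopic_escape}. The simplification here is that no sign-constancy hypothesis is needed, because all stationary signs are $+1$ in the $\bm H$-fixed setting. Two facts must be checked: (i) $(H)$ is maximal in the support of the aggregate virtual spectral flow $\mathbb W_{\mathcal U}$; (ii) $\dim_\bc(\mathbb W_{\mathcal U})^H\neq 0$.

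\textbf{Step 1: support identification.} Each $(\lambda_i,0)\in\mathcal U\cap\Lambda^{\bm H}$ is a dynamic critical point of the restricted problem, since $\mathscr H^{\bm H}$ contains only odd modes $m\in 2\bn-1$. Applying Lemma~\ref{lem:holomorphic_kernel} to $\mathscr A^{\bm H}$ (its proof is blockwise and uses only holomorphy of the dynamic eigenvalues) shows that $\mathbb W^{\bm H}_{\lambda_i}$ is an actual $\bm G$-representation. Moreover, $\rho_{m,j}(\lambda_i)\geq 0$, with $\rho_{m,j}(\lambda_i)>0$ exactly when $d_{m,j}(\lambda_i)>0$. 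Summing over $i$, the coefficients $\sum_i\rho_{m,j}(\lambda_i)$ of $\mathbb W_{\mathcal U}$ are non-negative, and they are positive exactly when the $(m,j)$-component occurs in the aggregate generalized kernel $\bigoplus_i\ker_{gen}\mathscr A^{\bm H}(\lambda_i)$. This is the key point: because no negative coefficients can appear, no cancellation occurs between different critical points. Consequently $\operatorname{supp}(\mathbb W_{\mathcal U})$ coincides with the support of the aggregate kernel. Since orbit types and maximality depend only on which isotypic components are present, maximality of $(H)$ transfers.

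\textbf{Step 2: positivity of the formal dimension.} Since $(H)$ is an orbit type of the aggregate kernel, some active index $(m,j)$ has $\dim_\bc\mathcal V_{m,j}^H>0$ and positive aggregate winding. Every other term $\big(\sum_i\rho_{m,j}(\lambda_i)\big)\dim_\bc\mathcal V_{m,j}^H$ is non-negative. Hence $\dim_\bc(\mathbb W_{\mathcal U})^H>0$. Theorem~\ref{thm:spatio_temporal_macroscopic_escape} then yields the branch meeting $\partial\mathcal U$.

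\textbf{Main obstacle.} The delicate point is Step 1. One must confirm that Lemma~\ref{lem:holomorphic_kernel} applies verbatim to the reduced operator on $\mathscr H^{\bm H}$ with group $\bm G$, and that each $\lambda_i$ is isolated. Isolation is guaranteed by discreteness of $\Lambda^{\bm H}$ together with boundedness of $\mathcal U$, which also makes the set of critical points in $\mathcal U$ finite and the aggregate sum well-defined.

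The only subtle issue is the passage from "positive winding" to "active kernel component". Holomorphy plus an isolated zero gives winding equal to the order of vanishing, which is at least $1$ by the Argument Principle. I would state this as in the proof of Lemma~\ref{lem:holomorphic_kernel} and not re-derive it.
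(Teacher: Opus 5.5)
Your proposal is correct and follows essentially the same route as the paper: Lemma~\ref{lem:holomorphic_kernel} makes each $\mathbb W^{\bm H}_{\lambda_i}$ an actual representation with non-negative coefficients and the same support as the local dynamic kernel. Hence $\mathbb W_{\mathcal U}$ has the same support as the aggregate kernel, $\dim_\bc(\mathbb W_{\mathcal U})^H$ is a sum of non-negative terms with at least one positive, and Theorem~\ref{thm:spatio_temporal_macroscopic_escape} concludes. One justification should be tightened: orbit types in general do depend on multiplicities, since isotropy groups in $V\oplus V$ include intersections not realized in $V$, so what transfers between the aggregate kernel and $\operatorname{supp}(\mathbb W_{\mathcal U})$ is specifically \emph{maximality}, which holds because passing to a nonzero component of a vector can only enlarge its isotropy.
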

\begin{proof}
By Lemma~\ref{lem:holomorphic_kernel}, each local spectral flow
$\mathbb W_{\lambda_i}^{\bm H}$ is an actual $\bm G$-representation whose
support coincides with the support of the dynamic generalized kernel of
$\mathscr A^{\bm H}(\lambda_i)$. Hence the aggregate spectral flow
$\mathbb W_{\mathcal U}$ is an actual representation whose support coincides
with the support of the aggregate generalized kernel.

Since $(H)$ is a maximal orbit type in this aggregate kernel, it is also an orbit type in the support of $\mathbb W_{\mathcal U}$. Thus, for at least one critical point $\lambda_i$ and one active $(m \geq 1)$ index $(m,j)$, one has both $\rho_{m,j}(\lambda_i)>0$ and $\dim_\bc \mathcal V_{m,j}^H>0$. Moreover all coefficients $\rho_{m,j}(\lambda_i)$ are non-negative by
Lemma~\ref{lem:holomorphic_kernel}. Therefore every term in
\[
\dim_\bc(\mathbb W_{\mathcal U})^H
=
\sum_{(\lambda_i,0)}
\sum_{m,j}
\rho_{m,j}(\lambda_i)\dim_\bc\mathcal V_{m,j}^H
\]
is non-negative, and at least one term is strictly positive. Hence
$\dim_\bc(\mathbb W_{\mathcal U})^H>0$, so Theorem~\ref{thm:spatio_temporal_macroscopic_escape}
applies.
\end{proof}

\section{Symmetric Hopf Bifurcation in a $\Gamma$-Equivariant Network}
\label{sec:application}
The abstract framework of
Sections~\ref{sec:bif_framework}--\ref{sec:spectral_flow} reduces the detection of two-parameter equivariant bifurcation at maximal orbit types to a
dimension count on the complex spectral flow representation.  We now demonstrate that this reduction yields concrete, computable results by applying it to symmetric Hopf bifurcation in a parameterized first-order ODE---a setting where the standard basic-degree pipeline is particularly unwieldy (see \cite[Chapter 11]{book-new}).
\vs
Let $\Gamma$ be a finite group acting on a Euclidean space $W := \br^N$ by permutation of coordinates:
\[
  \gamma(x_1, \ldots, x_N)
  := (x_{\gamma(1)}, \ldots, x_{\gamma(N)}).
\]
We interpret the $\Gamma$-representation $W$ as the configuration space of some networked $\Gamma$-symmetric system, modeled by the equation 
\begin{equation}\label{eq:example_system}
    \dot{x}(t) = f(\alpha, x(t)), \quad \alpha \in \br, \; x(t) \in W,
\end{equation}
where the map $f: \br \times W \to W$ satisfies the standard conditions:
\begin{enumerate}[label=($W_\arabic*$)]
    \item\label{w1} $f$ is continuous and $\Gamma$-equivariant;
    \item\label{w2} $f(\alpha, 0) = 0$ for all $\alpha \in \br$;
    \item\label{w3} the derivative $A(\alpha) := D_x f(\alpha, 0)$ exists, depends continuously on $\alpha$, and satisfies 
    \[
    \lim_{|v| \to 0} |f(\alpha,v) - A(\alpha)v|/|v| = 0.
    \]
\end{enumerate}
A $\Gamma$-equivariant vector field in this context describes dynamics which are compatible with the permutation action of $\Gamma$ on $W$. Thus synchrony subspaces, cluster subspaces, and more general fixed-point subspaces $W^H$ are invariant under the flow.  Periodic solutions born in Hopf bifurcation therefore carry both a spatial symmetry type, determined by an isotropy subgroup of $\Gamma$, and a temporal phase symmetry, represented by the $S^1$-action on periodic functions. 
\vs
To detect periodic solutions to \eqref{eq:example_system} of unknown period $p > 0$, one rescales time via a \emph{frequency parameter} $\beta := 2\pi/p$, reformulating the problem in the functional Banach space $\mathbb{E} = C_{2\pi}^1(\br; W)$ as the two-parameter operator equation
\begin{equation}\label{eq:example_operator_equation}
    \mathscr{F}(\alpha, \beta, x) := x - (L + \bm{j})^{-1}\left(\frac{1}{\beta}N_f(\alpha, \bm{j}(x)) + \bm{j}(x)\right) = 0, \quad x \in \mathbb{E},
\end{equation}
where $L: \mathbb E \to C_{2\pi}(\br; W)$ is the differential operator $L(x) := \dot{x}$, $\bm{j}$ is the compact embedding into $C_{2\pi}(\br; W)$ and $N_f: \br \times C_{2\pi}(\br; W) \to C_{2\pi}(\br; W)$ is the continuous superposition operator given by $N_f(\alpha, x) : = f(\alpha,x(t))$. Following \cite{book-new}, we notice that $\mathscr F: \br \times \br_+ \times \mathbb E \to \mathbb E$ (here $\br_+ := (0,\infty)$) is a completely continuous $G:= S^1 \times \Gamma$-equivariant field satisfying $\mathscr F(\alpha,\beta,0) = 0$ for all $(\alpha,\beta) \in \br \times \br_+$. Its linearization about the trivial branch is 
\[
\mathscr A(\alpha,\beta) := D_x \mathscr F(\alpha,\beta,0) : \mathbb E \to \mathbb E, \quad \mathscr A(\alpha,\beta)x = x - (L + \bm{j})^{-1}\left(\frac{1}{\beta}A(\alpha) \circ \bm j(x) + \bm{j}(x)\right).
\]
Moreover, if the state-space $W$ admits the $\Gamma$-isotypic decomposition $W = \bigoplus_{j=0}^r W_j$ with each $\Gamma$-isotypic component $W_j$ modeled on $\ell_j \in \bn \cup \{0\}$ copies of the corresponding irreducible $\Gamma$-representation $\operatorname{Irr}(\Gamma) = \{ \mathcal V_j \}_{j=0}^r$, then the continuous family of $\Gamma$-equivariant matrices $A: \br \to L^\Gamma(W)$ admits the block-matrix decomposition 
\[
A(\alpha) = \bigoplus_{j=0}^r A_j(\alpha), \quad  A_j(\alpha):= A(\alpha)|_{W_j}: W_j \to W_j.
\]
In turn, the phase-space $\mathbb E$ and the two-parameter family of linear, $G$-equivariant operators $\mathscr A(\alpha,\beta)$ admit the corresponding $G$-isotypic decompositions
\begin{align*}
    \begin{cases}
       \mathbb E = \overline{\bigoplus_{j=0}^r \bigoplus_{m=0}^\infty \mathbb E_{m,j}}, \quad &\mathbb E_{m,j}:= \{ \cos(mt)a + \sin(mt) b : a,b \in W_j \};  \\
       \mathscr A(\alpha,\beta) = \bigoplus_{j=0}^r \bigoplus_{m=0}^\infty \mathscr A_{m,j}(\alpha,\beta), \quad &\mathscr A_{m,j}(\alpha,\beta) := \mathscr A(\alpha,\beta)|_{\mathbb E_{m,j}} : \mathbb E_{m,j} \to \mathbb E_{m,j}.
    \end{cases}
\end{align*}
If we indicate by $\sigma(A_j(\alpha)) = \{ \zeta_{j,1}(\alpha), \ldots , \zeta_{j, \ell_j}(\alpha) \}$ the eigenvalues $\zeta_{j,k}: \br \to \bc$ of the $\Gamma$-isotypic restriction $A_j(\alpha)$, then each $G$-isotypic restriction $\mathscr A_{m,j}(\alpha,\beta)$ has the spectrum
\[
\sigma( \mathscr A_{m,j}(\alpha,\beta)) = \{ \mu_{m,j,k}(\alpha,\beta) \}, \quad \mu_{m,j,k}(\alpha,\beta) := \frac{i m \beta - \zeta_{j,k}(\alpha)}{\beta(1 + i m)}.
\]
Consequently, a trivial solution 
$(\alpha_0,\beta_0, 0) \in \br \times \br_+ \times \mathbb E$ is a \emph{stationary} critical point for \eqref{eq:example_operator_equation} if and only if for some index pair $(j,k) \in \{0,\ldots,r\} \times \{1,\ldots, \ell_j\}$, one has $\zeta_{j,k}(\alpha_0) = 0$ and a \emph{dynamic} critical point if and only if for some index triple $(m,j,k) \in \bn \times \{0,\ldots,r\} \times \{1,\ldots, \ell_j\}$, one has
\[
m \beta_0 = \operatorname{Im}(\zeta_{j,k}(\alpha_0)), \quad \text{ and } \quad \operatorname{Re}(\zeta_{j,k}(\alpha_0)) = 0.
\]
Suppose $(\lambda_0, 0) \in \bc \times \mathbb E$ is an isolated dynamic critical point (where $\lambda_0 = \alpha_0 + i\beta_0$). We wish to compute the coefficient of a maximal orbit type $(H) \in \Phi_1^t(G)$ in the local bifurcation invariant $\omega_G(\lambda_0)$ to guarantee the emergence of $H$-symmetric periodic solutions.
\vs
In the standard framework, the local bifurcation invariant $\omega_G(\lambda_0) \in A_1^t(G)$ is factored into a product of a stationary component and a dynamic component. Specifically, if we let $m_j^-(\alpha_0)$ denote the total algebraic multiplicity of negative eigenvalues of the stationary block $\mathscr A_{0,j}(\alpha_0,\beta_0)$ (equivalently the multiplicity of positive real eigenvalues of $A_j(\alpha_0)$), and we denote by $\rho_{m,j}(\lambda_0) \in \bz$ the net crossing number of the eigenvalues $\zeta_{j,k}(\alpha)$ passing through the purely imaginary critical value $im\beta_0$ (these are exactly the total winding numbers defined in \eqref{def:spectral_flow_rep}), then the local bifurcation invariant can be expressed as an $A(\Gamma)$-module product of the basic degrees $\deg_{\mathcal V_j} := \Gammadeg(-\id, B(\mathcal V_j)) \in A(\Gamma)$ and $\deg_{\mathcal V_{m,j}} \in A_1^t(G)$ (see Appendix \ref{sec:appendix_twisted_comp_form}, Corollary \ref{cor:standard_basic_degree_factorization}):
\begin{equation} \label{eq:standard_pipeline_factorization}
    \omega_G(\lambda_0) = \prod_{j=0}^r \left( \deg_{\mathcal V_j} \right)^{m_j^-(\alpha_0)} \;\cdot \sum_{m>0} \sum_{j=0}^r \rho_{m,j}(\lambda_0) \deg_{\mathcal{V}_{m,j}}.
\end{equation}

\subsection{A Concrete Spectral Configuration}
\label{sec:concrete_config}
We now specialize to a concrete scenario that illustrates both  local and global bifurcation guarantees. Consider the system
\eqref{eq:example_system} with the spatial symmetry group $\Gamma = S_4 \times \bz_2$, where $S_4$ encodes the permutation symmetry of the network and $\bz_2$ arises from the oddness of the nonlinearity.
Let the state space be $W = \mathcal{V}_2 \oplus \ell_3\mathcal{V}_3$ with multiplicity $\ell_3 \ge 4$, where $\mathcal{V}_2$ is the $2$-dimensional irreducible representation of $S_4$ and $\mathcal{V}_3$ is the standard $3$-dimensional geometric representation, both equipped with the antipodal $\bz_2$-action. Since $\mathcal{V}_3$ is absolutely irreducible, Schur's lemma implies $\operatorname{End}_\Gamma(\mathcal{V}_3) \cong \br$; the $\ell_3 \ge 4$ multiplicity ensures the restriction $A_3(\alpha)$ is a block matrix large enough to admit multiple pairs of complex conjugate eigenvalues.
\vs
We assume the linearization $A(\alpha) = A_2(\alpha) \oplus A_3(\alpha)$ satisfies the following spectral configuration at an isolated dynamic critical
point $\lambda_0 = \alpha_0 + i\beta_0$:
\begin{enumerate}[label=(\roman*)]
\item\label{spec:dynamic}
\textbf{Dynamic spectrum:} The block matrix $A_3(\alpha)$ assumes a standard normal form admitting complex conjugate eigenvalues $\zeta_{3,\pm}(\alpha) = \alpha \pm i\omega$. Exactly one positive eigenvalue branch $\zeta_{3,+}$ crosses the imaginary axis transversally at $\alpha_0$ with frequency $\beta_0 > 0$. This contributes a single crossing at the primary harmonic ($m=1$) with net crossing number $\rho_{1,3}(\lambda_0) = -1$.
  \item\label{spec:stationary}
\textbf{Stationary spectrum:} The eigenvalue $\zeta_2(\alpha_0)$ of the restriction $A_2(\alpha_0): \mathcal{V}_2 \to \mathcal{V}_2$ is strictly positive, yielding a single negative stationary mode
($m_2^-(\alpha_0) = 1$).  Apart from the critical conjugate pair on the $\mathcal V_3$-isotypic component, all remaining eigenvalues of $A_3(\alpha_0)$ are bounded away from the imaginary axis, and $A_3(\alpha_0)$ has no positive real eigenvalues  ($m_3^-(\alpha_0)=0$).
\end{enumerate}
\begin{remark}\rm
This spectral configuration arises generically in systems where the $\mathcal{V}_2$-isotypic component is damped (eigenvalues bounded away from the imaginary axis in a neighborhood of $\alpha_0$) while a Hopf instability develops on the $\mathcal{V}_3$-isotypic component. The key structural
feature is the \emph{coexistence} of a nontrivial stationary signature on $\mathcal{V}_2$ with a dynamic crossing on $\mathcal{V}_3$---precisely the
situation where the standard pipeline requires a nontrivial Burnside module product.
\end{remark}
Under these hypotheses, the local bifurcation invariant factors as
\[
  \omega_G(\lambda_0)
  = - \deg_{\mathcal{V}_2} \cdot \deg_{\mathcal{V}_{1,3}},
\]
where $\deg_{\mathcal{V}_2} \in A(\Gamma)$ and
$\deg_{\mathcal{V}_{1,3}} \in A_1^t(G)$ are the basic degrees.
We seek to detect the emergence of periodic solutions with the maximal twisted
symmetry $H = (D_4^p)^{\varphi,1} \leq S^1 \times \Gamma$, where the spatial
projection is $K=D_4^p:=D_4\times \bz_2$. If we write $D_4=\langle \gamma,\kappa \mid \gamma^4=\kappa^2=1,\;
\kappa\gamma\kappa=\gamma^{-1}\rangle$ and write the central $\bz_2$-factor multiplicatively as $\{\pm1\}$, then we can introduce the ancillary notation $D_4^d$ to indicate the diagonal copy of $D_4$ in $D_4^p:=D_4\times\mathbb Z_2$, i.e.
\[
D_4^d
:=
\{(\kappa^p\gamma^q,(-1)^q):p\in\{0,1\},\ q\in\{0,1,2,3\}\}
\leq D_4\times \bz_2.
\]
The twist homomorphism
$\varphi:D_4^p\to\{\pm1\}\leq S^1$ is then given by
$\varphi(\kappa^p\gamma^q,\pm 1)=\pm (-1)^q$
so that $\ker\varphi=D_4^d$ and $D_4^p/D_4^d\simeq \bz_2$.
\begin{remark}\rm
The condition of being fixed by the twisted subgroup $H=(D_4^p)^{\varphi,1}$ should be understood as a spatio-temporal constraint on the critical Fourier mode, rather than as a pointwise fixed-point condition for the spatial group $D_4^p$. Indeed, an element $(\varphi(g),g)\in H$ fixes a first-harmonic vector $u(t)=\operatorname{Re}(e^{it}v)$ precisely when the temporal phase $\varphi(g)\in\{\pm1\}$ compensates the spatial action of $g$ on $v$, i.e.
\[
g v=\varphi(g)^{-1}v.
\]
Thus elements in $\ker\varphi=D_4^d$ fix the spatial vector $v$ itself, while elements of $D_4^p\setminus D_4^d$ send $v$ to $-v$ and are compensated by a half-period time shift. In particular, $H$-fixed critical modes are not generally fixed pointwise by the full spatial group $D_4^p$; they are fixed by the graph of the twist homomorphism.
\end{remark}
\vs
\noi\textbf{The Basic-Degree Pipeline}\;
To extract the $(H)$-coefficient using \eqref{eq:standard_pipeline_factorization}, a researcher must first compute the basic degrees $\deg_{\mathcal{V}_2} \in A(\Gamma)$ and $\deg_{\mathcal{V}_{1,3}} \in A_1^t(G)$. Since evaluating these degrees requires resolving complete isotropy lattices, this step is typically deferred to computational software like the \texttt{EquiDeg} package \cite{GAP} in GAP:
\begin{lstlisting}[language=GAP, caption={GAP Script for Basic Degrees of $S_4 \times \bz_2$ and $S^1 \times S_4 \times \bz_2$}] 
LoadPackage("EquiDeg"); 
# Create groups SO(2), S4, and Z2
so2 := SpecialOrthogonalGroupOverReal(2);
s4 := SymmetricGroup(4);
z2 := pCyclicGroup(2);
# Generate G1 = S4 x Z2 and set abbreviated conjugacy class names
G1 := DirectProduct(s4, z2);
SetCCSsAbbrv(G1, [ "Z1", "Z2", "D1z", "D1", "Z2m", "Z1p", "Z3", "V4", "D2z", "Z4", "D2", "D1p", "D2d", "V4m", "D2p", "Z4d", "D3", "D3z", "Z3p", "V4p", "D4z", "D4d", "Z4p", "D4", "D4z", "D4hd", "D3p", "A4", "D4p", "A4p",  "S4", "S4m", "S4p"]);
# Output the character table to identify irreps
tbl := CharacterTable(G1);
# Compute basic degrees in A(S4 x Z2)
irrs1 := Irr(G1);
for i in [1..10] do
    basic_deg := BasicDegree(irrs1[i]);
    PrintFormatted("\n Computing Basic Degree in V_{} \n", i);
    Display(basic_deg);
od;
# Compute twisted basic degrees in A_1^t(S^1 x S_4 x Z_2)
G := DirectProduct(so2, G1);
irrs := Irr(G);
for i in [1..10] do
    basic_deg := TwistedBasicDegree(irrs[1,i]);
    PrintFormatted("\n Computing Basic Degree in V_1,{} \n", i);
    Display(basic_deg);
od;
\end{lstlisting}
Examining the character table output, the central $\bz_2$-class is the class denoted $2a$.  The rows $X.5$ and $X.8$ have character values $-2$ and $-3$ on this class, respectively, while $X.6$ and $X.10$ have values $+2$ and $+3$.  Hence the antipodal $\bz_2$-actions correspond to
\[
\mathcal V_2 \leftrightarrow \texttt{V\_5}, \qquad
\mathcal V_3 \leftrightarrow \texttt{V\_8}.
\]  
The basic degrees evaluate as follows:
\begin{lstlisting}[language=GAP, caption={GAP Output for the Basic Degrees $\deg_{\mathcal V_2}$ and $\deg_{\mathcal V_{1,3}}$}]
gap> bdeg := BasicDegree( irrs1[5] );
<1(8)-1(24)-1(26)+1(33)> in Brng( Group([ (1,2,3,4), (1,2), (5,6) ]) )
gap> tdeg := TwistedBasicDegree(irrs[1,8]);
<-1(1,16)-1(1,24)+1(1,43)+1(1,52)+1(1,53)+1(1,83)+1(1,89)> in A_1^t( <group> )
\end{lstlisting}
Here, the Burnside classes $(8)$, $(24)$, $(26)$ and $(33)$ denote the spatial groups $V_4 \simeq \bz_2 \times \bz_2$, $D_4$, $D_4^{\hat d}$ and $S_4^p := S_4 \times \bz_2$ respectively, while the twisted module class $(1,89)$ corresponds to the maximal twisted subgroup $(D_4^p)^{\varphi,1}$. Using a simple routine to lift the stationary element into the 0-th temporal mode of the ambient module structure, we evaluate the full  $A(\Gamma)$-module product directly:
\begin{lstlisting}[language=GAP, caption={GAP Function for $A(\Gamma)$-Module Multiplication and Local Bifurcation Invariant Evaluation}]
gap> ModuleProduct := function( a, b )
>     local mod_basis, lifted_a, i, coeff, id;
>     mod_basis := Basis( FamilyObj(b)!.ring );
>     lifted_a := Zero( FamilyObj(b)!.ring );
>     for i in [1 .. Length(a)] do
>         coeff := a!.coeffList[i];
>         id := a!.ccsIdList[i];
>         if IsList(id) then id := id[Length(id)]; fi;
>         lifted_a := lifted_a + coeff * mod_basis[0, id];
>     od;
>     return lifted_a * b;
> end;
\end{lstlisting}
To isolate the bifurcating symmetries for our target $(D_4^p)^{\varphi,1}$, we extract the component corresponding to the $(1,89)$ index from the resulting evaluation:
\begin{lstlisting}[language=GAP, caption={Isolating the Maximal Twisted Orbit Type Coefficient}]
gap> result := ModuleProduct( bdeg, tdeg );
<1(1,10)+1(1,14)+1(1,15)-1(1,16)+1(1,19)-1(1,22)-1(1,24)-1(1,28)-1(1,41)+1(1,43)+1(1,52)+1(1,53)-1(1,71)-3(1,75)+1(1,83)+1(1,89)> in A_1^t( <group> )
# Extract the coefficient of the maximal twisted orbit type
gap> Display(-result);
A_1^t( <group> ) element:
...
-1      (1,89)  (Z_2|Z_1 x D4d|D4p)
\end{lstlisting}
The output directly gives 
\[
\operatorname{coeff}^{(D_4^p)^{\varphi,1}}\left( \deg_{\mathcal V_2} \cdot \deg_{\mathcal V_{1,3}} \right) = 1 \implies \operatorname{coeff}^{(D_4^p)^{\varphi,1}}\left( \omega_G(\lambda_0) \right) = -1.
\]
\vs
\noi\textbf{The Complex Spectral Flow Approach.}\;
By contrast, the methodology of Section~\ref{sec:spectral_flow} circumvents the Burnside module arithmetic entirely by lifting the spectral data directly into the representation ring.
\vs
\noi\textbf{Step 1: The Complex Spectral Flow.}\;
The winding numbers of the critical eigenvalues around the origin in the complex plane coincide with the crossing numbers
$\rho_{m,j}(\lambda_0)$. Since only $\mathcal{V}_{1,3}$ crosses at the primary harmonic, the spectral flow representation reduces to
\[
  \mathbb{W}_{\lambda_0}
  = \sum_{m > 0} \sum_{j} \rho_{m,j}(\lambda_0)\,\mathcal{V}_{m,j}
  = -\mathcal{V}_{1,3}.
\]
\noi\textbf{Step 2: The Stationary Sign.}\;
The coefficient formula~\eqref{eq:universal_formula} requires the sign of the real determinant of the stationary linearization restricted to the $H$-fixed-point space. Since the spatial projection of $H$ is $K=D_4^p$ and $\mathcal V_2$ carries the antipodal $\bz_2$-action, one has
\[
\mathcal V_2^K=\{0\}.
\]
Hence the stationary determinant restricted to the $H$-fixed space is $+1$.

\vs
\noi\textbf{Step 3: The Formal Complex Dimension.}\;
The twisted orbit type $H=(D_4^p)^{\varphi,1}$ with $\ker\varphi=D_4^d$ fixes a $1$-complex-dimensional subspace of $\mathcal V_{1,3}$. To see this, notice that a vector $v \in \mathcal V_{1,3}$ is $H$-fixed if $gv = \varphi(g)^{-1}v$ for all $g \in D_4^p$. The $\bz_2$-valued character $\varphi$ identifies an index-two subspace of $\mathcal V_3$ on which $D_4^d$ acts trivially and $D_4^p \setminus D_4^d$ acts as $-1$, which by inspection of the character table of $D_4^p$ on $\mathcal V_3$ is one-dimensional over $\bc$. Since $\rho_{1,3}(\lambda_0)=-1$, we obtain
\[
\dim_\bc \mathbb W_{\lambda_0}^H=-1,
\qquad |W(H)/S^1|=1.
\]

\vs
\noi\textbf{Step 4: The Bifurcation Coefficient.}\;
Applying the dimension formula~\eqref{eq:universal_formula}, we obtain
\[
\operatorname{coeff}^{H}(\omega_G(\lambda_0))
  = (+1)\cdot \frac{-1}{1} = -1.
\]
This agrees with the signed GAP evaluation of the local bifurcation invariant. The spectator mode $\mathcal{V}_2$ enters only through a single determinant sign; the basic degree
$\deg_{\mathcal{V}_2}$, its internal Burnside structure, and the full module product are never invoked.

\subsubsection{Global Bifurcation via the Aggregate Flow}
To illustrate how the spectral flow scales to global continuation, we extend the scenario to a bounded parameter domain $\mathcal{U} \subset \br \times \br_+ \times \mathbb{E}$ containing two critical points $\Lambda \cap \mathcal{U} = \{(\lambda_1,0),(\lambda_2,0)\}$
with the following spectral configurations:
\begin{table}[h]
\centering
\renewcommand{\arraystretch}{1.3}
\begin{tabular}{lcc}
\hline
& $\lambda_1$ & $\lambda_2$ \\
\hline
Negative stationary modes on $\mathcal{V}_2$ & 1 & 0 \\
Crossing number $\rho_{1,3}$ & -1 & -2 \\
Local spectral flow & $-\mathcal{V}_{1,3}$ & $-2\mathcal{V}_{1,3}$ \\
\hline
\end{tabular}
\caption{Spectral Configurations for Global Bifurcation Scenario}
\end{table}
\vs
\noi\textbf{The Basic-Degree Pipeline}\;
The aggregate invariant
$\omega_G(\lambda_1) + \omega_G(\lambda_2)$
requires re-evaluating the full module arithmetic.
\begin{lstlisting}[language=GAP, caption={Evaluating the Aggregate Module Product for Global Bifurcation}]
# Compute the aggregate invariant
gap> aggregate := -result - 2 * tdeg;
<-1(1,10)-1(1,14)-1(1,15)+3(1,16)-1(1,19)+1(1,22)+3(1,24)+1(1,28)+1(1,41)-3(1,43)-3(1,52)-3(1,53)+1(1,71)+3(1,75)-3(1,83)-3(1,89)> in A_1^t( <group> )
# Extract the coefficient of the maximal twisted orbit type
gap> Display(aggregate);
A_1^t( <group> ) element:
...
-3       (1,89)  Z_2|Z_1 x D4d|D4p
\end{lstlisting}
The aggregate coefficient evaluates to $\operatorname{coeff}^{(D_4^p)^{\varphi,1}}(\omega_G(\lambda_1) + \omega_G(\lambda_2))=-3 \neq 0$. By Corollary \ref{cor:macro_escape} to the Rabinowitz alternative (Theorem~\ref{thm:rab_alt}), the non-vanishing of this coefficient guarantees that a continuous branch of $(D_4^p)^{\varphi,1}$-symmetric solutions must escape the boundary $\partial \mathcal{U}$. However, determining this required a re-evaluation of the complete module arithmetic.
\vs
\noi\textbf{The Complex Spectral Flow Approach.}\;
The aggregate virtual flow is the direct sum of the local flows:
\[
  \mathbb{W}_{\mathcal{U}}
  = \mathbb{W}_{\lambda_1} + \mathbb{W}_{\lambda_2}
  = -\mathcal{V}_{1,3} - 2\mathcal{V}_{1,3}
  = -3\mathcal{V}_{1,3}.
\]
Since $(D_4^p)^{\varphi,1}$ fixes a $1$-complex-dimensional subspace of $\mathcal{V}_{1,3}$, the aggregate formal dimension is
\[
\dim_\bc(\mathbb{W}_{\mathcal{U}})^{(D_4^p)^{\varphi,1}}
  = -3 \times 1 = -3 \neq 0.
\]
By Theorem~\ref{thm:macroscopic_escape}, the $(D_4^p)^{\varphi,1}$-symmetric branch must exit $\mathcal{U}$---regardless of whether the individual local invariants $\omega_G(\lambda_1)$ and $\omega_G(\lambda_2)$ cancel at other orbit types. The entire global verification reduces to a scalar multiplication.

\section{The Complex Ginzburg--Landau Equation on Compact Manifolds}
\label{sec:CGL_application}
The preceding example demonstrated the spectral flow's computational advantage for a non-abelian symmetry group, where the standard pipeline requires nontrivial Burnside module arithmetic. We now  turn to a qualitatively different setting: a class of problems whose eigenvalues are \emph{universally holomorphic} in the bifurcation parameters, regardless of the underlying geometry or spatial symmetry group. The holomorphic specialization (Corollary~\ref{cor:holomorphic_universal_guarantee}) then eliminates all Jacobian sign verifications simultaneously, reducing the entire bifurcation analysis to a structural observation about 
the parameter dependence.
\vs
Consider the complex Ginzburg--Landau equation (CGLe) on a compact Riemannian manifold $\Omega$ with boundary that is invariant under the isometric action of a compact Lie group $\Gamma$. The evolution of the complex amplitude $\psi(t,x) \in \bc$ is governed by the PDE:
\begin{align} \label{eq:CGL_manifold}
\begin{cases}
\partial_t \psi = (1+i\eta)\Delta_\Omega \psi 
  + \alpha\psi + f(\psi),  \\
\frac{\partial \psi}{\partial n}|_{\partial \Omega} \equiv 0,
\end{cases}
\end{align}
where $\Delta_\Omega$ is the Laplace--Beltrami operator subject to Neumann boundary conditions, $\alpha \in \br$ is the bifurcation parameter, 
$\eta \in \br$ is the dispersion coefficient, and $f: \bc \to \bc$ is a continuous map satisfying the conditions
\begin{enumerate}[label=($F_\arabic*$)]
    \item\label{f1} $f(e^{i\varphi}u) = e^{i\varphi}f(u)$ for all $u \in \mathbb{C}$ and $\varphi \in S^1$.
    \item\label{f2} $f(u) = o(|u|)$ as $u \to 0$.
    \item\label{f3} There exist constants $a, b > 0$ and $c \in (0,1)$ such that $|f(u)| \le a|u|^c + b$ for all $u \in \mathbb{C}$.
\end{enumerate}
Condition \ref{f1} ensures that the system admits the natural $S^1$-phase symmetry, condition \ref{f2} guarantees that the Fréchet derivative of the nonlinear remainder strictly vanishes at the origin, allowing the trivial branch to exist and ensuring the linearization is governed entirely by the linear terms, and condition \ref{f3} provides the necessary a priori bounds for an application of the equivariant Leray-Schauder degree.
\vs
We seek relative equilibria of the form $\psi(t,x) = e^{-i\beta t}\exp(\xi t) \cdot u(x)$, where $\beta \in \br$ is the temporal frequency and $\xi \in \mathfrak{g}$ is an element of the Lie algebra of $\Gamma$ governing the spatial drift. Substituting the ansatz transforms~\eqref{eq:CGL_manifold} into the stationary equation
\begin{align} \label{eq:CGL_stationary_manifold}
\begin{cases}
    -(1+i\eta)\Delta_\Omega u + \xi_* u = \lambda u + f(u),
\\
\frac{\partial u}{\partial n}|_{\partial \Omega} \equiv 0,
\end{cases}
\end{align}
where $\lambda := \alpha + i\beta \in \bc$ and $\xi_* u := \tfrac{d}{dt}|_{t=0}\, u(\exp(-\xi t) \cdot x)$ is the infinitesimal generator of the drift action. The symmetry group of the stationary equation \eqref{eq:CGL_stationary_manifold} is $G = S^1 \times C_\Gamma(\xi)$, where the first factor acts by temporal phase shift and the second is the centralizer of the drift in $\Gamma$. 
\vs
To formulate this problem topologically, we consider the Sobolev space $\mathscr{H} := \{ u \in H^2(\Omega; \bc) : \frac{\partial u}{\partial n}\big|_{\partial \Omega} = 0 \}$ encoding the Neumann boundary conditions. Since the shifted Laplacian $-\Delta_\Omega + I$ is a linear isomorphism from $\mathscr{H}$ to $L^2(\Omega; \bc)$, equation \eqref{eq:CGL_stationary_manifold} can be recast as a fixed-point problem for the completely continuous $G$-equivariant field $\mathscr{F}: \bc \times \mathscr{H} \to \mathscr{H}$ (see Garcia-Azpeitia, Ghanem, and Krawcewicz \cite{GGK_NODEA} for the precise Sobolev embedding arguments) given by
\begin{align*} 
\mathscr{F}(\lambda, u) := u - (-\Delta_\Omega + I)^{-1} \left( u + \frac{\lambda u - \xi_* u + f(u)}{1+i\eta} \right).
\end{align*}
The key structural observation is that the complex parameter $\lambda$ enters the linearization only through an affine spectral shift. Indeed, the linearization of the fixed-point field at the trivial solution is
\[
D_u\mathscr F(\lambda,0)
= I-(-\Delta_\Omega+I)^{-1}
\left(I+\frac{\lambda I-\xi_*}{1+i\eta}\right).
\]
Since $-\Delta_\Omega+I$ is an isomorphism, the zeros of $D_u\mathscr F(\lambda,0)$ are equivalently described by the spectral pencil
\begin{equation}\label{eq:CGL_linearization_manifold}
\mathscr A(\lambda)
:=
\mathcal L-\lambda I,
\qquad
\mathcal L:=-(1+i\eta)\Delta_\Omega+\xi_*.
\end{equation}
Thus the bifurcation spectrum is governed by the affine shift
$\mathcal L-\lambda I$. If we denote by $\{\kappa_j\}_{j \in \mathcal{K}}$ the eigenvalues of the spatial operator $\mathcal{L} := -(1+i\eta)\Delta_\Omega + \xi_*$, which are 
discrete and of finite multiplicity since $\Omega$ is compact (the resolvent of $\Delta_\Omega$ is compact), then the spectrum of $\mathscr{A}(\lambda)$ evaluates to $\mu_j(\lambda) = \kappa_j - \lambda$. Since $\lambda$ enters as a linear shift, each eigenvalue $\mu_j(\lambda)$ is \textbf{holomorphic} in $\lambda$---a property that holds 
universally, regardless of the geometry of $\Omega$, the structure of $\Gamma$, or the choice of drift $\xi$.
\vs
Since the critical set $\Lambda := \{ (\lambda_j,0) \in \bc \times \mathscr H: \lambda_j = \kappa_j \}$ is discrete, the holomorphic specialization of the spectral flow framework applies directly. Corollary~\ref{cor:holomorphic_escape} then guarantees the macroscopic emergence of symmetric patterns entirely from the linear spectrum:
\begin{theorem}
\label{thm:CGL_universal_escape}
For any maximal orbit type $(H)$ in the generalized kernel of $\mathcal{L} - \lambda_j I$, there exists an unbounded continuum of $H$-symmetric relative equilibria emerging from the trivial branch at $(\lambda_j,0)$.
\end{theorem}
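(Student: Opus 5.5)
The plan is to derive unboundedness from the escape criterion of Corollary~\ref{cor:holomorphic_escape}, applied on an exhausting family of bounded domains. First, I check the standing hypotheses. The critical set $\Lambda=\{(\kappa_j,0)\}$ is discrete, because $\mathcal L$ has compact resolvent and so its spectrum accumulates only at infinity. Each $\mu_j(\lambda)=\kappa_j-\lambda$ is holomorphic and has a simple zero at $\lambda_j=\kappa_j$.

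Next I treat the critical points as dynamic. The $S^1$ factor acts by complex phase multiplication on $\mathscr H=H^2(\Omega;\bc)$. Every nonzero vector therefore sits in an $m=1$ isotypic component, the stationary sector $\bigoplus_j\mathscr H_{0,j}$ is trivial, and every stationary determinant sign equals $+1$. In particular, Lemma~\ref{lem:holomorphic_kernel} shows that $\rho_{1,j}(\lambda_j)$ equals the algebraic multiplicity of $\kappa_j$, which is positive. The local spectral flow $\mathbb W_{\lambda_j}$ is thus an actual representation with the same support as $\ker_{gen}(\mathcal L-\lambda_jI)$.

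For the global argument, I fix a maximal orbit type $(H)$ of $\ker_{gen}(\mathcal L-\lambda_jI)$ and argue by contradiction. Suppose the connected component $\mathscr C$ of $\overline{\mathscr S^H}$ through $(\lambda_j,0)$ is bounded. I restrict the problem to the fixed-point space $\mathscr H^H$, on which $W(H)$ acts, and apply Rabinowitz's alternative (Theorem~\ref{thm:rab_alt}) together with Corollary~\ref{cor:macro_escape}. To do this, I would take $\mathcal U$ to be a bounded invariant neighbourhood of $\mathscr C$ with $\mathscr C\cap\partial\mathcal U=\emptyset$. It can be chosen so that $\partial\mathcal U\cap\overline{\mathscr S^H}=\emptyset$, using the standard Whyburn-lemma separation argument in the locally compact solution set supplied by the compactness of $\mathscr F$.

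The summed invariant then has to be nonzero at $(H)$. Because all signs are $+1$ and all $\rho$ are nonnegative, the quantity $\dim_\bc(\mathbb W_{\mathcal U}(H))^H$ is a sum of nonnegative terms. The term at $\lambda_j$ is strictly positive, so Corollary~\ref{cor:holomorphic_escape} (through Theorem~\ref{thm:macroscopic_escape}) gives a component meeting $\partial\mathcal U$, which is the desired contradiction.

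\textbf{Main obstacle.} The step I expect to be hardest is maximality. The orbit type $(H)$ is maximal in $\ker_{gen}$ at the single point $\lambda_j$, but Theorem~\ref{thm:macroscopic_escape} needs maximality in the \emph{aggregate} support over all critical points inside $\mathcal U$. Another kernel $\ker(\mathcal L-\kappa_{j'})$ could carry a strictly larger orbit type $(L)>(H)$. My first attempt would be to work in $\mathscr H^H$ and use positivity at every level. In the recurrence for $C_{\mathcal U}(K)$, every cumulative degree $\dim_\bc\mathbb W_{\mathcal U}^K$ is nonnegative, so the full sum of $\operatorname{coeff}$'s over $(K)\ge(H)$, weighted by $n(H,K)|W(K)/S^1|$, equals $\dim_\bc\mathbb W_{\mathcal U}^H>0$. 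Hence some $(K)\ge(H)$ has a nonzero aggregate coefficient, and that already forces a component of $\overline{\mathscr S^H}$ to reach $\partial\mathcal U$. Linking that component back to $\lambda_j$ needs the bifurcating component to be unique. If this cannot be secured, I would fall back on the statement's literal reading: an unbounded $H$-symmetric continuum emanating from the critical points, chosen by shrinking $\mathcal U$ so that it contains only $\lambda_j$ and the critical points connected to it.

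Finally, letting $\mathcal U$ exhaust $\bc\times\mathscr H$ yields unboundedness, because a component meeting every $\partial\mathcal U$ is unbounded. The a priori bound \ref{f3} is used to check that the nonlinear problem is well posed on bounded sets and that $\mathscr F$ is completely continuous.
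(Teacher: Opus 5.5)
Your proof is correct, and it takes a different and more careful route than the paper's. The paper only verifies the hypotheses of Corollary~\ref{cor:holomorphic_escape}: there are no $m=0$ modes, so every stationary sign is $+1$, and holomorphy gives $\dim_\bc\mathbb W_{\lambda_j}^H>0$. It then lets $\mathcal U$ grow. This silently assumes exactly the two things you flagged. The first is that $(H)$ stays maximal in the aggregate kernel $\mathbb K_{\mathcal U}$ once $\mathcal U$ contains other $\kappa_{j'}$. The second is that the component reaching $\partial\mathcal U$ is the $H$-symmetric one through $(\lambda_j,0)$, and the same one for every $\mathcal U$.

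Your contradiction set-up removes both issues. It consists of a bounded component $\mathscr C\subset\overline{\mathscr S^H}$ through $(\lambda_j,0)$, a Whyburn neighbourhood $\mathcal U$ with $\partial\mathcal U\cap\overline{\mathscr S^H}=\emptyset$, and the degree argument for the $S^1$-equivariant problem on $\bc\times\mathscr H^H$. It also already dissolves your ``main obstacle''. By \eqref{def:twisted_degree_recurrence}, your weighted sum over $(K)\ge(H)$ is just the cumulative $S^1$-degree on the $H$-fixed space. By \eqref{eq:cumulative_identity}, the Rabinowitz vanishing there therefore reads $\sum_i\dim_\bc\mathbb W_{\lambda_i}^H=0$. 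Holomorphy, however, makes every term nonnegative and the $\lambda_j$-term positive. Since $\partial\mathcal U$ carries no $H$-symmetric solution by construction, this is a contradiction regardless of which component the degree detects. Uniqueness of components, the fallback, shrinking $\mathcal U$ to exclude other critical points, and the final exhaustion step are all unnecessary.

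Two cautions apply. First, use Theorem~\ref{thm:rab_alt} and Corollary~\ref{cor:macro_escape} only in this restricted form. Unrestricted, they yield some component of $\overline{\mathscr S}$ meeting $\partial\mathcal U$, which contradicts nothing, since less symmetric solutions may lie on $\partial\mathcal U$. Second, justify $(\lambda_j,0)\in\overline{\mathscr S^H}$. You can do this via Corollary~\ref{cor:holomorphic_universal_guarantee} and Theorem~\ref{thm:app_abstract_local_bif}, or by running the same argument on the component of $\overline{\mathscr S^H}\cup\{(\lambda_j,0)\}$.

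The paper's route is shorter and stays inside its orbit-type-coefficient framework. Yours is complete, and it shows that the global conclusion holds for every $(H)$ with $\ker\nolimits_{gen}(\mathcal L-\lambda_jI)^H\neq0$. Maximality is needed only to read off a nonzero $(H)$-coefficient of $\omega_G(\lambda_j)$.
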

\begin{proof}
The phase space $\mathscr{H}$ consists entirely of complex-valued functions transforming under the $1$-folded phase shift $e^{i\varphi} u$, so its $G$-isotypic decomposition admits no stationary ($m=0$) modes. Consequently, the stationary determinant is identically $+1$ everywhere, and the constant-sign hypothesis of Corollary~\ref{cor:holomorphic_escape} is vacuously satisfied. 

Since the eigenvalues $\mu_j(\lambda) = \kappa_j - \lambda$ are holomorphic, Lemma~\ref{lem:holomorphic_kernel} implies that the spectral flow representation $\mathbb{W}_{\lambda_j}$ shares the exact support of the dynamic generalized kernel, with every winding number being non-negative. 
Since $(H)$ is maximal in this kernel, its formal dimension $\dim_\bc \mathbb{W}_{\lambda_j}^H > 0$ is positive. Therefore, the aggregate signed fixed-point dimension $\dim_\bc \left(\mathbb W_{\mathcal U}(H) \right)^H$ over any bounded parameter domain $\mathcal{U}$ containing the critical point evaluates to a strictly positive sum and cannot vanish. 

The macroscopic escape of the branch then follows immediately from Corollary~\ref{cor:holomorphic_escape} and, since $\mathcal U$ can be taken to be an arbitrarily large bounded domain containing $(\lambda_j, 0)$, the branch cannot be contained in any bounded subset of $\bc \times \mathscr H$, yielding an unbounded continuum.
\end{proof}

We illustrate the universality of this conclusion with two specializations to canonical geometries.
\vs
\noi\textbf{Specialization I: Rotating Spiral Waves on the Planar Disc.}\;
Let $\Omega = D \subset \br^2$ be the unit disc with $\Gamma = SO(2)$, and set the drift to a rotation $\xi_* = \omega\partial_\theta$ such that our relative equilibria take the form $\psi(t,r,\theta) = e^{-i\beta t}u(r,\theta+ \omega t)$. The centralizer is $C_{SO(2)}(\xi) = SO(2)$, so the symmetry group of our problem becomes $G = S^1 \times SO(2) \simeq \bt^2$. The phase space decomposes into isotypic components $\mathscr{E}_{m,n}$ modeled on the irreducible $\bt^2$-representation $\mathcal U_m \otimes \mathcal U_1 \simeq \bc$ and indexed by $(m,n) \in \bz \times \bn$, where $n$ labels the non-negative roots $\sqrt{s_{|m|,n}}$ of $J'_{|m|}$ (the derivative of the $|m|$-th Bessel function of the first kind), arising from the Neumann eigenvalue problem on $D$. The maximal orbit types in $\mathscr H$ are the spiral wave symmetries $H_m := \{(e^{im\varphi}, e^{-i\varphi})\} \leq \bt^2$ arising as the only non-trivial isotropy groups in each irreducible $\bt^2$-representation $\mathcal U_m \otimes \mathcal U_1$.
\vs
Evaluating the spatial operator $\mathcal{L} = -(1+i\eta)\Delta_D + \omega \partial_\theta$ on the component $\mathscr{E}_{m,n}$ yields the eigenvalues $\kappa_{m,n} = (1+i\eta)s_{|m|,n} + i \omega m$. The critical points are thus the isolated parameter values $\lambda_{m,n} := \kappa_{m,n}$ at which the linearized operator $\mathscr{A}(\lambda_{m,n})$ becomes singular.
\vs
In a recent work~\cite{GGK_NODEA}, Garcia-Azpeitia et al. used the standard $\bt^2$-equivariant degree to establish that every critical point $(\lambda_{m,n},0)$ is the branching point for an unbounded branch of $|m|$-armed spiral wave solutions under general conditions on 
the nonlinearity. Since $\bt^2$ is abelian, the Burnside module arithmetic is trivial, but the standard pipeline still required analytically deriving the $2 \times 2$ Jacobian matrix of the 
complex mapping $\mu_{m,n}(\alpha,\beta) := \kappa_{m,n} - (\alpha + i \beta)$ and verifying that its determinant is strictly positive. Under the spectral flow framework, this computation is rendered unnecessary by the 
universal holomorphic structure of~\eqref{eq:CGL_linearization_manifold}: the existence of unbounded branches of spiral waves is an immediate consequence of the holomorphic specialization.
\vs
\begin{remark}\rm
The results of~\cite{GGK_NODEA} and those obtained here via the spectral flow are equivalent: both establish the existence of unbounded branches of $|m|$-armed spiral waves under the same conditions on the nonlinearity. The difference is methodological. In~\cite{GGK_NODEA}, non-cancellation across critical points is 
verified by computing $\omega_{\bt^2}(\lambda_{m,n}) = +1 \cdot (H_m)$ at each 
individual critical point. In the spectral flow framework, this non-cancellation is a structural consequence of holomorphic eigenvalue dependence: Cauchy's Argument Principle forces every winding number to be non-negative, making cancellation impossible a priori.
\end{remark}

\vs
\noi\textbf{Specialization II: Standing Waves on the Sphere.}\;
Let $\Omega = S^2 \subset \br^3$ with $\Gamma = O(3)$, and set $\xi = 0$ (standing waves) such that the symmetry group becomes $G = S^1 \times O(3)$. The phase space decomposes into isotypic components corresponding to spatial spherical harmonics of degree $l \in \bn_0$, with each $G$-isotypic component modeled on the irreducible representation $\mathcal{U}_1 \otimes \mathcal{W}_l$, where  $\mathcal{U}_1 \simeq \bc$ is the $1$-folded irreducible $S^1$-representation and $\mathcal{W}_l$ is 
the $(2l+1)$-dimensional irreducible $O(3)$-representation associated with spherical harmonics of degree $l$.
\vs
Since the Laplace-Beltrami operator acts on the spherical harmonics $Y_l^k$ (for $-l \leq k \leq l$) as $-\Delta_{S^2} Y_l^k = l(l+1) Y_l^k$, the spatial operator $\mathcal{L} = -(1+i\eta)\Delta_{S^2}$ evaluates on the $l$-th isotypic component exactly to the highly degenerate eigenvalues $\kappa_l = (1+i\eta)l(l+1)$. These are distinct for distinct $l$, so the associated critical points $\lambda_l = \kappa_l$ are isolated. 
\vs
The isotropy lattice
$\Phi_1^t(S^1 \times O(3);\, \mathcal U_1 \otimes \mathcal W_l)$
is remarkably rich.  In the standard notation of the twisted equivariant
degree~\cite{AED}, a twisted subgroup is written
$K^{\varphi,m} = \{(z,g)\in S^1\times K:\ z^m=\varphi(g)\}$ where $K\leq O(3)$ and $\varphi:K\to S^1$ is a continuous character.  Since the $S^1$-gauge action on $\mathscr H$ is $1$-folded in the present specialization,
any isotropy group fixing a non-zero vector in
$\mathcal U_1\otimes\mathcal W_l$ has $m=1$.  Thus the relevant twisted
subgroups are graph subgroups
\[
K^{\varphi,1}
=
\{(\varphi(g),g):g\in K\}.
\]
When $\varphi$ is trivial, we suppress it and write simply $K$.  When
$\varphi$ has image $\{\pm1\}$, we use the superscripts of
Ihrig and Golubitsky~\cite{IhrigGolubitsky1984} to record the corresponding
index-two kernel. In particular, if $L\leq K$ is an index-two subgroup, let
$\varphi_{K,L}:K\to\{\pm1\}$ be the quotient character with
$\ker\varphi_{K,L}=L$.  Then
\[
K^{\varphi_{K,L},1}
=
\{(\varphi_{K,L}(g),g):g\in K\}
\leq S^1\times O(3).
\]
We write $O(2)^-$ for the case $K=O(2)$ and $L=SO(2)$, and
$\mathbb O^-$ for the case $K=\mathbb O$ and $L=\mathbb T$, where
$\mathbb T$ is the rotational tetrahedral subgroup of the rotational
octahedral group $\mathbb O$.

The dihedral superscript requires a separate convention.  Let
$D_{2n} := \langle \rho,\tau:\rho^{2n}=\tau^2=e,\ \tau\rho\tau=\rho^{-1}\rangle\leq SO(3)$ denote the rotational dihedral group generated by a rotation
$\rho$ of order $2n$ about a fixed axis and a half-turn $\tau$ about a
perpendicular axis, containing the index-two subgroup $D_n:=\langle \rho^2,\tau\rangle \leq D_{2n}$. Let $\varphi_d:D_{2n}\to\{\pm1\}$ be the character $\varphi_d(\rho^a\tau^b) :=(-1)^a$. Following the class-III notation of~\cite{IhrigGolubitsky1984}, we denote the
corresponding twisted subgroup of $S^1\times O(3)$ by
\[
D_{2n}^d := (D_{2n})^{\varphi_d,1} = \{((-1)^a,\rho^a\tau^b):0\leq a<2n,\ b\in\{0,1\}\}.
\]
\vs
Thus the superscript $d$ records the diagonal index-two choice
$\ker\varphi_d=D_n$.
\vs
With these conventions, the maximal orbit types in $\Phi_1^t(S^1 \times O(3);\, \mathcal U_1 \otimes \mathcal W_l)$ are determined by the parity of $l$ and by the fundamental invariants of the Platonic groups (see Ihrig and Golubitsky~\cite{IhrigGolubitsky1984}).  Let
$Z_2^c=\{\pm I\}$ denote the central subgroup of $O(3)$.
\begin{itemize}
\item For $l=1$, the only maximal symmetry is $O(2)^-$.

\item For $l=2$, the only maximal symmetry is $O(2)\oplus Z_2^c$.

\item For $l\in\{4,8,14\}$, the maximal symmetries are
$O(2)\oplus Z_2^c$ and $\mathbb O\oplus Z_2^c$.

\item For all other even $l$, the maximal symmetries are
$O(2)\oplus Z_2^c$, $\mathbb O\oplus Z_2^c$, and
$\mathbb I\oplus Z_2^c$.

\item For $l\in\{3,7,11\}$, the maximal symmetries are
$O(2)^-$, $\mathbb O^-$, and $D_{2n}^d$ for $\frac{l}{2}<n\leq l$.

\item For $l=5$, the maximal symmetries are
$O(2)^-$, $D_6^d$, and $D_8^d$.

\item For $l\in\{9,13,17,19,23,29\}$, the maximal symmetries are
$O(2)^-$, $\mathbb O^-$, $\mathbb O$, and $D_{2n}^d$ for $\frac{l}{2}<n\leq l$.

\item For all other odd $l$, the maximal symmetries are
$O(2)^-$, $\mathbb O^-$, $\mathbb O$, $\mathbb I$, and
$D_{2n}^d$ for $ \frac{l}{2}<n\leq l$.
\end{itemize}
\begin{remark}
The exceptional low-degree omissions are governed by the invariant theory of the Platonic groups. For even $l$, spatial inversion acts trivially on $\mathcal W_l$, so the maximal spatial isotropy groups contain the central factor $Z_2^c$. The even icosahedral invariants occur in degrees generated by
$6b+10c$, which excludes precisely the exceptional even degrees $\{2,4,8,14\}$ from the icosahedral list. For odd $l$, spatial inversion acts as $-1$, so non-trivial graph twists are possible. The first odd
icosahedral invariant occurs in degree $15$, and the higher odd icosahedral degrees are generated by $15+6b+10c$; this accounts for the absence of $\mathbb I$ at $l=9,13,17,19,23,29$. The dihedral family $D_{2n}^d$ appears
through the corresponding diagonal index-two character and is present for odd
$l$ exactly in the range $\frac l2<n\leq l$, with the low-degree exception $l=5$ giving only $D_6^d$ and $D_8^d$.
\end{remark}
Under the spectral flow framework, because the eigenvalues $\mu_l(\lambda) = (1+i\eta)\,l(l+1) - \lambda$ are unconditionally holomorphic, 
\emph{the global bifurcation analysis is exactly as straightforward as it was for the planar disc}: the formal dimension of any maximal orbit type in the kernel is unconditionally positive, and macroscopic escape follows without any Burnside ring computation. The algebraic complexity of the 
isotropy lattice of $O(3)$ is entirely bypassed.

\section*{Acknowledgements}
The author would like to thank Prof. Wiesław Krawcewicz and Prof. Sławomir Rybicki for their valuable insights and helpful conversations regarding the equivariant spectral flow, which significantly contributed to the development of this work.
\newpage
\appendix
\section{The $S^1$-Equivariant Degree} \label{sec:appendix_S1}
\noi{\bf  Equivariant notation.}
Let $\Gamma$ be a compact Lie group. For any subgroup  $H \leq \Gamma$, we denote by $(H)$ its conjugacy class,
by $N(H)$ its normalizer by $W(H):=N(H)/H$ its Weyl group in $\Gamma$. The set of all subgroup conjugacy classes in $\Gamma$ is denoted by $\Phi(\Gamma):=\{(H): H\le \Gamma\}$ and has a natural partial order defined as follows
\[
(H)\leq (K) \iff \exists_{ g\in \Gamma}\;\;gHg^{-1}\leq K.
\]
As is possible with any partially ordered set, we extend the natural order over $\Phi(\Gamma)$ to a total order, which we indicate by $<$ to differentiate the two relations. Moreover, we put $\Phi_n (\Gamma):= \{ (H) \in \Phi(\Gamma) \; : \; \dim W(H) = n\}$ and, for any $(H),(K) \in \Phi_n(\Gamma)$, we denote by $n(H,K)$ the number of subgroups $\tilde K \leq \Gamma$ with $\tilde K \in (K)$ and $H \leq \tilde K$.
\vs
Given a $\Gamma$-space $X$ with an element $x \in X$, we denote by
$\Gamma_{x} :=\{g\in \Gamma:gx=x\}$ the {\it isotropy group} of $x$
and we call $(\Gamma_{x}) \in \Phi(\Gamma)$  the {\it orbit type} of $x \in X$. Put $\Phi(\Gamma,X) := \{(H) \in \Phi_n(\Gamma)  : 
(H) = (\Gamma_x) \; \text{for some $x \in X$}\}$ and  $\Phi_n(\Gamma,X):= \Phi(\Gamma,X) \cap \Phi_n(\Gamma)$. For a subgroup $H\leq \Gamma$, the subspace $
X^{H} :=\{x\in X:\Gamma_{x}\geq H\}$ is called the {\it $H$-fixed-point subspace} of $X$. If $Y$ is another $\Gamma$-space, then a continuous map $f : X \to Y$ is said to be {\it $\Gamma$-equivariant} if $f(gx) = gf(x)$ for each $x \in X$ and $g \in \Gamma$. 
\vs
\noi{\bf Axioms of the $S^1$-Equivariant Degree.}
Let $W$ be an orthogonal $S^1$-representation with an open bounded $S^1$-invariant set $\Om \subset \br \times W$ (we assume that $S^1$ acts trivially on $\br$). An $S^1$-equivariant map $f: \br \times W \rightarrow W$ is said to be {\it $\Om$-admissible} if $f^{-1}(0) \cap \partial \Om = \emptyset$, in which case the pair $(f,\Om)$ is called an {\it admissible $S^1$-pair in $W$}. We denote by $\mathcal M_1^{S^1}(\br \times W)$ the set of all admissible $S^1$-pairs in $W$ and by $\mathcal M_1^{S^1}$ the set of {\it all} admissible $S^1$-pairs, defined by taking a union over all orthogonal $S^1$-representations as follows
\[
\mathcal M_1^{S^1}:= \bigcup\limits_W \mathcal M_1^{S^1}(\br \times W).
\]
If $W$ is a complex $S^1$-representation, then any complex linear map $W \rightarrow W$ is $S^1$-equivariant. Moreover, for any $w \in W$ one has the following two possibilities
    \[
    G_w = \begin{cases}
        \bz_k \quad & \text{ for some } k \in \bz; \\
        S^1.
    \end{cases}
    \]
With this in mind, we denote by  $\Phi(S^1)$ the set of all subgroup conjugacy classes in $S^1$ and by $\Phi_1(S^1)$ the subgroup conjugacy classes in $S^1$ with $1$-dimensional Weyl groups, i.e.
\[
\Phi_1(S^1) := \{ (\bz_k) : k \in \bn \}.
\]
We denote by $A_1(S^1)$ the free $\bz$-module generated by the set $\Phi_1(S^1) = \{ (\bz_k) \in \Phi(S^1): k \in \bn \}$. Any element $a \in A_1(S^1) = \bz[\Phi_1(S^1)]$ can be represented by a formal sum
\[
a = \sum_{k \in \bn} n_k(\bz_k), \quad n_k \in \bz,
\]
where $n_k = 0$ for almost every $k \in \bn$. We are now in a position to present an axiomatic definition of the $S^1$-equivariant degree.
\begin{theorem} \rm
    There exists a function $\s1deg: \mathscr M_1^{S^1} \rightarrow A_1(S^1)$ satisfying the properties:
    \begin{enumerate}[label=($S_\arabic*$)]
        \item\label{s1} {\bf (Additivity)}: If for any two $S^1$-invariant, disjoint subsets $\Om_1,\Om_2 \subset \Om$, one has $f^{-1}(0) \cap  \Om \subset \Om_1 \cup \Om_2$, then
        \[
        \s1deg(f,\Om) = \s1deg(f,\Om_1) + \s1deg(f,\Om_2).
        \]
        \item\label{s2} {\bf (Homotopy)}: If $h:[0,1] \times \br \times W \rightarrow W$ is an $\Om$-admissible $S^1$-equivariant homotopy, then
        \[
        \s1deg(h_0,\Om) = \s1deg(h_1,\Om),
        \]
        where $h_t: \br \times W \rightarrow W$ is used to indicate the map $h_t(\cdot, \cdot) := h(t, \cdot,\cdot)$.
        \item\label{s3} {\bf (Normalization)}: If $f$ is regular normal in $\Om$ and $f^{-1}(0) \cap \Om = S^1(w_0)$ for some $w_0 \in \Om$, then one has
    \[
        \s1deg(f,\Om) = \begin{cases}
            \rho_0 (\bz_l) \quad & \text{if } S^1_{w_0} = \bz_l \text{ for some } l \in \bn; \\
             0  \quad & \text{if } S^1_{w_0} = S^1, 
        \end{cases}
        \]
        where $\rho_0 := \operatorname{sign}\det(\left.Df(w_0)\right|_{S_{w_0}})$ (here $S_{w_0} \subset \br \times W$ denotes the slice to the orbit $S^1(w_0)$ at $w_0$, i.e. the orthogonal subspace in $\br \times W$ to the orbit $S^1(w_0)$ at $w_0$).
    \end{enumerate}
\end{theorem}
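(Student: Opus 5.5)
The plan is to construct $\s1deg$ by counting zero orbits of generic (``regular normal'') maps with slice signs, and then to verify that this count is a homotopy invariant. As a preliminary reduction, I would first treat $W$ finite-dimensional. In applications $f$ is a compact field, and a standard equivariant Schauder projection onto finite-dimensional $S^1$-invariant subspaces (orthogonally averaged over $S^1$) reduces to this case; independence of the approximation follows from the suspension property proved at the end.

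\textbf{Step 1 (genericity).} I will show that every admissible pair $(f,\Om)$ is $\Om$-admissibly $S^1$-homotopic to a map that is regular normal in $\Om$. Such a map has finitely many zero orbits $S^1(w_1),\dots,S^1(w_r)$. Each $w_i$ with $S^1_{w_i}=\bz_{l_i}$ is a regular zero of the restriction $Df(w_i)|_{S_{w_i}}$. This restriction is a square map: the domain $\br\times W^{\bz_{l_i}}$ has real dimension $1+\dim W^{\bz_{l_i}}$, the orbit direction is removed, and the target is $W^{\bz_{l_i}}$. Normality near the $S^1$-fixed stratum is arranged by making $f$ equal to its derivative in the normal directions.

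I would build the homotopy inductively over the finitely many orbit types in $\Phi(S^1;W)$. I start at the top, with $S^1$ itself. There I first push the zeros on $\br\times W^{S^1}$ away from $\partial\Om$ and make them normal, using a cutoff times a linear term in the normal complement. I then pass to successively smaller $\bz_k$. On $(\br\times W^{\bz_k})\setminus(\text{larger strata})$ the group $S^1/\bz_k$ acts freely. Ordinary transversality on the orbit space $(\br\times W^{\bz_k})/S^1$, a manifold of dimension equal to $\dim W^{\bz_k}$, gives a perturbation of the quotient section. Extending this perturbation equivariantly to all of $\br\times W$ by the equivariant Tietze/Gleason extension keeps it small and supported away from the already-fixed zeros. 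This step is the main obstacle: the perturbations must be kept $\Om$-admissible and compatible across strata, which requires uniform smallness near the boundary of each stratum.

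\textbf{Step 2 (definition and invariance).} For a regular normal $f$ I define
\[
\s1deg(f,\Om):=\sum_{i:\,S^1_{w_i}=\bz_{l_i}} \operatorname{sign}\det\big(Df(w_i)|_{S_{w_i}}\big)\,(\bz_{l_i}).
\]
Zeros with isotropy $S^1$ are discarded, since normality and the dimension count ($1+\dim W^{S^1}$ versus $\dim W^{S^1}$) make them nondegenerate circles-free contributions of index $0$. The slices are oriented using the complex structure of $W^{\bz_l}$ together with the orientation of $\br$, and this orientation is coherent along the stratum.

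To prove well-definedness, take two regular normal maps joined by an admissible homotopy. I apply Step 1 relative to $\{0,1\}$ to the homotopy on $[0,1]\times\br\times W$. In each stratum $\bz_l$ the zero set then becomes a compact $1$-manifold of orbits, that is, of curves in the quotient. The slice orientations are coherent, so the signed endpoint counts cancel coefficientwise for each $(\bz_l)$. Two points need care here. Curves cannot leave the $\bz_l$-stratum and enter a larger stratum, by normality. Curves cannot reach the boundary, by admissibility.

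\textbf{Step 3 (axioms).} Homotopy invariance $(S_2)$ is exactly the well-definedness argument. Additivity $(S_1)$ holds because a regular normal approximation on $\Om$ restricts to ones on $\Om_1$ and $\Om_2$, and the orbit sum splits. Normalization $(S_3)$ is the definition applied to a single orbit. Finally, I would check the suspension/restriction property: enlarging $W$ by a complex $S^1$-summand on which $f$ is the identity leaves the count unchanged, since the slice determinant is multiplied by a positive complex determinant. This makes the degree independent of the finite-dimensional approximation used in the preliminary reduction.
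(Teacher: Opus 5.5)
The paper gives no proof of this theorem; it quotes it from \cite{Dylawerski1991,Ize1989,AED}. So your outline has to stand on its own. Its overall shape (generic approximation, signed orbit count, cobordism of the zero set of a homotopy) is the right one. As written, however, it has a genuine gap at the $S^1$-fixed stratum: the count you define is not homotopy invariant.

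Here is a counterexample. Take $W=\mathbb{R}\oplus\mathcal U_1$ with coordinates $(a,z)$, and a cutoff $\psi(t,a)$ equal to $1$ near $(0,0)$ and to $0$ near the unit circle. Set
\[
f_s(t,a,z):=\bigl(t^2+a^2+\psi(t,a)|z|^2-(1-2s),\;(t+ia)z\bigr),\qquad s\in[0,1],
\]
on the ball $\Omega$ of radius $2$.
\begin{itemize}
\item All zeros stay in the closed unit ball, so this is an admissible $S^1$-homotopy, and $f_1$ has no zeros at all.
\item $f_0$ has one free orbit $\{t=a=0,\ |z|=1\}$. Its slice derivative $(\dot t,\dot a,\operatorname{Re}\dot z)\mapsto(2\operatorname{Re}\dot z,\ \dot t+i\dot a)$ is invertible.
\item $f_0$ also has the fixed circle $\{z=0,\ t^2+a^2=1\}$. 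Along it the fixed part is regular, and $f_0$ is exactly linear in $z$ with invertible normal derivative $z\mapsto(t+ia)z$.
\end{itemize}
So $f_0$ is regular and normal in your sense. Your count gives $\pm(\mathbb{Z}_1)$ for $f_0$ and $0$ for $f_1$. In fact $(S_1)$--$(S_3)$ force a tube around that fixed circle to carry degree $\mp(\mathbb{Z}_1)$, so fixed zeros cannot simply be discarded. Note also that fixed zeros form closed curves, not finitely many orbits.

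The step that fails is ``apply Step 1 relative to $\{0,1\}$'' in Step 2. The fixed zero set of a homotopy is a surface. Your normality requires the normal derivative along it, a map into $\prod_m M_{n_m}(\mathbb{C})$ over the $\mathcal U_m$-isotypic blocks, to avoid the singular set. That set has real codimension two. The winding numbers of $\det_{\mathbb{C}}$ of these blocks along the fixed loops at the ends obstruct such an extension whenever they do not cancel. At the forced degeneracies, curves of $\mathbb{Z}_m$-orbits terminate on the fixed stratum; in the example this happens at the tip of the disc swept out by the shrinking circle. So ``curves cannot enter a larger stratum, by normality'' is false.

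The same thing happens on finite strata. Since $\mathbb{Z}_2$ acts on $\mathcal U_1$ by $-1$, a $\mathbb{Z}_2$-equivariant normal derivative is an arbitrary real $2\times2$ matrix. Its determinant can change sign along a curve of $\mathbb{Z}_2$-orbits, and a $\mathbb{Z}_1$-orbit is then born by period doubling. In that situation your restricted-slice sign also differs from the full-slice sign $\rho_0$ in $(S_3)$.

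The missing ingredient is \emph{strict} normality: near a zero of isotropy $H$, the component of $f$ normal to $\mathbb{R}\times W^H$ must be the identity.
\begin{itemize}
\item Achieving this in Step 1 forces you to deform the normal derivative along each fixed loop to the identity. This creates $\mathbb{Z}_m$-orbits whose signed number is, up to orientation conventions, the winding of $\det_{\mathbb{C}}$ of the corresponding block. That is exactly the contribution you discard, and it is the mechanism behind Lemma~\ref{lemm:determinantal_reduction}.
\item In Step 2, identity normal data at the ends then extends over the fixed zero surface and along finite-stratum curves. Your cobordism argument, and the agreement of your signs with $\rho_0$, then go through.
\end{itemize}
Without this repair your degree would vanish on the paper's own maps $A_\Theta$ at an isolated critical point. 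For example, take $\Theta=\tfrac{\varepsilon}{2}-|\lambda-\lambda_0|+c\|u\|^2$ with $c$ large. Then the only zeros form a circle of trivial solutions, and your count gives $0$, contradicting Lemma~\ref{lemm:determinantal_reduction}.
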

The map $\s1deg$ satisfies a number of additional properties which can be derived from the axioms \ref{s1}--\ref{s3}, including the existence property:
\begin{enumerate}[label=($S_\arabic*$), resume]
    \item\label{s4} \textbf{(Existence)}: If there is an orbit type $(\bz_k) \in \Phi_1(S^1)$ for which $\operatorname{coeff}^{(\bz_k)}(\s1deg(f,\Om)) \neq 0$, then there exists $x \in \Om$ such that $f(x) = 0$ and $S^1_x \ge \bz_k$;
\end{enumerate}
and the following product property:
\begin{lemma} \rm \label{lemm:s1_degree_product_property}
Let $W$ be a finite dimensional space on which $S^1$ acts trivially with an open bounded set $D \subset W$ and let $\phi: W \to W$ be a continuous function satisfying $\phi^{-1}(0) \cap \partial D = \emptyset$ such that the local Brouwer degree $\deg(\phi,D) \in \bz$ is well-defined. Then for any admissible $S^1$-pair $(f,\Om) \in \mathcal M^{S^1}_1$, one has $(f \times \phi, \Om \times D) \in \mathcal M^{G}_1$ and
\[
\s1deg(f \times \phi, \Om \times D) = \deg(\phi,D) \s1deg(f,\Om).
\]
\end{lemma}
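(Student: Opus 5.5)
The plan is to reduce to Brouwer degree theory on $S^1$-orbit spaces, via a finite-dimensional approximation and a splitting of the admissible map. First I will check admissibility. Since $S^1$ acts trivially on $W$, the product $\Om \times D$ is open, bounded and $S^1$-invariant in $\br \times W' \times W$, where $W'$ is the representation of the pair $(f,\Om)$. The map $f\times\phi$ is $S^1$-equivariant. Its zeros are exactly $f^{-1}(0)\times\phi^{-1}(0)$. The boundary satisfies $\partial(\Om\times D)\subset(\partial\Om\times\overline D)\cup(\overline\Om\times\partial D)$, so no zero lies on it. This settles $(f\times\phi,\Om\times D)\in\mathcal M_1^{S^1}$.

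For the formula, I will first reduce to the generic case using homotopy invariance \ref{s2}.
\begin{itemize}
\item I perturb $\phi$ to a smooth map $\phi_1$ with $0$ a regular value. By standard Brouwer theory, $\phi_1^{-1}(0)\cap D=\{y_1,\dots,y_r\}$ and $\deg(\phi,D)=\sum_i \operatorname{sign}\det D\phi_1(y_i)$.
\item Independently, I $S^1$-equivariantly homotope $f$ to a regular normal map $f_1$. Its zero set in $\Om$ is then a finite union of orbits $S^1(w_k)$, each isolated in its own tube $\Om_k$.
\end{itemize}
Both homotopies stay admissible in $\Om\times D$, because the boundary avoidance argument above applies at every time. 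So $\s1deg(f\times\phi,\Om\times D)=\s1deg(f_1\times\phi_1,\Om\times D)$.

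By additivity \ref{s1}, the degree then splits over the disjoint invariant neighborhoods $\Om_k\times B(y_i)$ of the zero orbits $S^1(w_k)\times\{y_i\}$. The key check is that $f_1\times\phi_1$ is regular normal at each such orbit. The isotropy of $(w_k,y_i)$ equals $S^1_{w_k}$, because the action on the $W$-factor is trivial. The slice at $(w_k,y_i)$ is $S_{w_k}\times W$. The derivative restricted to this slice is block diagonal, $Df_1(w_k)|_{S_{w_k}}\oplus D\phi_1(y_i)$. Its determinant sign is therefore the product of the two signs. The normality condition (the derivative being an isomorphism on the normal directions inside the fixed-point stratum) also holds blockwise.

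Normalization \ref{s3} then gives, for $S^1_{w_k}=\bz_l$,
\[
\s1deg(f_1\times\phi_1,\Om_k\times B(y_i))=\operatorname{sign}\det D\phi_1(y_i)\cdot \s1deg(f_1,\Om_k).
\]
If instead $S^1_{w_k}=S^1$, both sides vanish. Summing over $i$ and $k$, and using additivity again for $(f_1,\Om)$, yields $\deg(\phi,D)\,\s1deg(f,\Om)$.

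The main obstacle is the regular-normal approximation step. One must make sure that the definition of regular normality in the source theory (in particular the normality condition near orbits of points with isotropy $S^1$) is preserved under taking products with a trivially acting regular map. If that becomes delicate, I will fall back on a cleaner route. Fix $f$ and note that $\Psi:\phi\mapsto \s1deg(f\times\phi,\Om\times D)$ is homotopy invariant and additive in $(\phi,D)$. It therefore factors through Brouwer degree, once one checks two things: that $\Psi$ is multiplicative under the excision to small balls, and that $\Psi(\pm$ linear isomorphism$)=\pm\,\s1deg(f,\Om)$. The latter follows by homotoping a determinant-positive linear map to the identity within $GL^+$, together with a suspension property for the identity factor.
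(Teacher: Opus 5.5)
The paper does not prove this lemma. It is listed in Appendix~\ref{sec:appendix_S1} among the properties of the $S^1$-degree that ``can be derived from the axioms'', with no argument given, so there is no paper proof to compare against.

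Your route is the standard construction-level proof of multiplicativity. You approximate $\phi$ by a map with $0$ as a regular value and $f$ by a regular normal map $f_1$. You then evaluate orbit by orbit, using that the slice at $(w_k,y_i)$ is $S_{w_k}\times W$ and that the derivative there is block diagonal. Your admissibility check and your computation at zero orbits with isotropy $\bz_l$ are correct.

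One convention point: the product-of-signs step tacitly gives $S_{w_k}\times W$ the product orientation. This matches the induced slice orientation only if slices are oriented so that the orbit direction followed by the slice gives the ambient orientation. The opposite convention produces an extra factor $(-1)^{\dim W}$, so you should fix the convention explicitly.

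\textbf{The gap: $S^1$-fixed zeros.} The step that fails as written is your treatment of zeros with isotropy $S^1$. Write $V$ for the representation carrying $(f,\Om)$ and $n=\dim V^{S^1}$. A regular normal $f_1$ does not in general have zero set equal to a finite union of orbits. Its zeros in $\Om^{S^1}=\Om\cap(\br\times V^{S^1})$ are the zeros of $f_1^{S^1}\colon \br\times V^{S^1}\to V^{S^1}$, a map from dimension $n+1$ to dimension $n$, so they typically form compact curves rather than isolated points.

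For example, for the complemented maps of Section~\ref{sec:bif_framework} with $\Theta(\lambda_0,0)>0$, the function $\Theta(\cdot,0)$ must vanish along a set separating $\lambda_0$ from $\partial B_\varepsilon(\lambda_0)$, and a small perturbation does not remove it.

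This breaks your argument in three places:
\begin{enumerate}
\item Each point of such a curve is a separate orbit.
\item Additivity $(S_1)$ cannot split a connected zero set.
\item $(S_3)$ requires the whole zero set in the region to be a single orbit.
\end{enumerate}
So your sentence ``if $S^1_{w_k}=S^1$, both sides vanish'' has no justification. The same problem recurs for $f_1\times\phi_1$, whose fixed zeros are these curves times $\{y_i\}$.

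\textbf{How to close it.} Use the counting formula on which the construction of the degree rests. For a regular normal map, $S^1\text{-deg}$ is the sum of $\operatorname{sign}\det(Df(x)|_{S_x})\,(\bz_l)$ over the finitely many zero orbits with finite isotropy; the fixed zeros contribute nothing because of normality.

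To apply this to the product, you must also check that $f_1\times\phi_1$ is normal near $(\Om\times D)^{S^1}=\Om^{S^1}\times D$. This holds because $W$ is fixed: the normal directions are exactly those of $\Om^{S^1}$, and
\[
(f_1\times\phi_1)(x+v,y)=(f_1\times\phi_1)(x,y)+(v,0).
\]
With that in place, your orbit count yields the formula.

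Your fallback route does not avoid this input. The suspension property it invokes is precisely the case $\phi=\id_W$ of the lemma itself.
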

There is a well-defined degree, called the {\it cumulative $S^1$-degree}, associated with the $S^1$-degree and whose role in the construction of the twisted equivariant degree is analogous to the role played by the local Brouwer degree in the equivariant Brouwer degree.
\begin{definition} \rm
The cumulative $S^1$-degree is the function $\deg_{S^1}:\mathscr M_1^{S^1} \rightarrow \bz$, given by
\[
\deg_{S^1}(f,\Om) := \sum_{s > 0} d_s, \quad d_s := \operatorname{coeff}^{(\bz_s)}(\s1deg(f,\Om)).
\]
\end{definition}
\begin{remark} \rm
    The cumulative $S^1$-degree is a total algebraic count of the non-trivial $S^1$ orbits in the solution set $f^{-1}(0) \cap \Om$ and satisfies the three degree axioms \ref{s1}--\ref{s3}, the existence property \ref{s4}, and also the product property described in Lemma \ref{lemm:s1_degree_product_property} with $\s1deg$ and $A^1(S^1)$ replaced by $\deg_{S^1}$ and $\bz$, respectively.
\end{remark}
The following computational tool reduces the $S^1$-equivariant degree of a complemented complex-linear operator on an $S^1$-isotypic space to the winding number of its complex determinant. It is the key mechanism by which the spectral data of the linearization interfaces with the equivariant degree in the main text.
\begin{lemma} \rm \label{lemm:determinantal_reduction}
Let $W$ be a an orthogonal $S^1$
representation modeled on a non-trivial irreducible $S^1$-representation $\mathcal U_m \simeq \bc$ with finite multiplicity $N$ (i.e. $W \simeq N \cdot \mathcal U_m$) and let $A\colon \overline{B}_\varepsilon(\lambda_0) \to M_N(\bc)$ be a continuous family of complex matrices satisfying $A(\lambda) \in GL_N(\bc)$ for all $\lambda \in \partial B_\varepsilon(\lambda_0)$. For any auxiliary function $\Theta$  on an isolating cylinder $\mathscr O := \{(\lambda,u) \in \bc \times W : |\lambda - \lambda_0| < \varepsilon,\; \|u\| < \delta\}$ (see Section~\ref{sec:bif_framework}), one has
\begin{equation}\label{eq:det_reduction}
\s1deg\bigl(A_\Theta, \mathscr O \bigr)
= \deg\bigl(\det\nolimits_\bc A,\; B_\varepsilon(\lambda_0)\bigr)
  \cdot (\bz_m).
\end{equation}
where $A_\Theta(\lambda) := (\Theta,\, A(\lambda))$. In particular, the cumulative $S^1$-degree evaluates to the winding number coefficient
\[
\deg_{S^1}\bigl(A_\Theta, \mathscr O \bigr)
= \deg\bigl(\det\nolimits_\bc A,\; B_\varepsilon(\lambda_0)\bigr).
\]
\end{lemma}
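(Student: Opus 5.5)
The plan is to reduce an arbitrary family $A$ to a normal form by admissible homotopies, and then to evaluate the normal form with the axioms \ref{s1}--\ref{s3}. Two observations make the reduction possible. First, the auxiliary function can be fixed once and for all. The two conditions defining an auxiliary function are convex in $\Theta$, so any two auxiliary functions on $\mathscr O$ are joined by the straight-line homotopy through auxiliary functions. This homotopy is $\mathscr O$-admissible by the same boundary argument as in Lemma~\ref{lem:linearization_homotopy}. Hence I may take $\Theta(\lambda,u)=\|u\|-\delta/2$. Second, the admissibility of $A_\Theta$ depends only on $A$ restricted to $\partial B_\varepsilon(\lambda_0)$ taking values in $GL_N(\bc)$. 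On the lateral boundary $\Theta>0$, and on the base boundary with $u\neq 0$ the invertibility of $A(\lambda)$ gives $A(\lambda)u\neq 0$. So any homotopy $A_t$ of continuous families $\overline{B}_\varepsilon(\lambda_0)\to M_N(\bc)$ that stays in $GL_N(\bc)$ on the boundary circle yields an admissible $S^1$-homotopy. Such families are complex linear in $u$ and therefore $S^1$-equivariant on $N\cdot\mathcal U_m$.

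The next step is to move $A$ into the normal form
\[
A_k(\lambda)=\operatorname{diag}\big(p_k(\lambda),1,\ldots,1\big),\qquad k:=\deg(\det\nolimits_\bc A,B_\varepsilon(\lambda_0)).
\]
Here $p_k(\lambda)=\prod_{i=1}^{k}(\lambda-\lambda_i)$ with distinct points $\lambda_i\in B_\varepsilon(\lambda_0)$ when $k\ge 0$, and $p_k(\lambda)=\prod_{i=1}^{|k|}\overline{(\lambda-\lambda_i)}$ when $k<0$. On the boundary, I use that $\det\colon GL_N(\bc)\to\bc^\ast$ induces an isomorphism $\pi_1(GL_N(\bc))\cong\bz$. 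Consequently the loops $A|_{\partial B_\varepsilon}$ and $A_k|_{\partial B_\varepsilon}$ are freely homotopic in $GL_N(\bc)$, since both have winding number $k$. I then extend this boundary homotopy over the whole disk. Because $M_N(\bc)$ is convex, the map defined on $(\{0,1\}\times\overline B_\varepsilon)\cup([0,1]\times\partial B_\varepsilon)$ extends to $[0,1]\times\overline B_\varepsilon$, for instance by a radial collar interpolation. The homotopy invariance axiom \ref{s2} then gives $\s1deg(A_\Theta,\mathscr O)=\s1deg((A_k)_\Theta,\mathscr O)$.

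For the normal form, the zero set of $(A_k)_\Theta$ consists of finitely many orbits. Each is the $S^1$-orbit of a point $(\lambda_i,w_i)$, where $w_i=(z,0,\ldots,0)$ with $|z|=\delta/2$. Since $S^1$ acts on $\mathcal U_m$ by $e^{im\varphi}$, the isotropy of each such point is $\bz_m$. I will choose small disjoint invariant neighbourhoods around these orbits and apply additivity \ref{s1}. This reduces the computation to a single orbit, where I apply the normalization axiom \ref{s3}.

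The main obstacle is the orientation computation in \ref{s3}, namely determining the sign of $\det Df(w_i)|_{S_{w_i}}$. The slice has real dimension $2+2N-1$, and the derivative splits into three pieces:
\begin{itemize}
\item the radial direction of $z$, paired with $\Theta$, which contributes $+1$;
\item the identity on the last $N-1$ complex coordinates, which contributes $+1$;
\item the $(\lambda,\text{first coordinate})$ block, whose determinant is that of the real Jacobian of $\lambda\mapsto p_k(\lambda)$ at $\lambda_i$, scaled by $|z|^2>0$.
\end{itemize}
The third piece is $+$ for a holomorphic simple zero and $-$ for an antiholomorphic one. I must also check that the normalization is genuinely regular normal, and that the slice orientation convention matches this block ordering. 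If it does not, a global sign convention must be fixed consistently so that the $k=1$ case gives $+(\bz_m)$. Summing over the $|k|$ orbits yields $k\cdot(\bz_m)$, which is \eqref{eq:det_reduction}. The cumulative statement follows immediately by summing coefficients.
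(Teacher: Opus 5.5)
Your proposal is correct and follows essentially the paper's route: a free homotopy of the boundary loops in $GL_N(\bc)$ to a diagonal normal form via the isomorphism $\det_*\colon\pi_1(GL_N(\bc))\to\pi_1(\bc^*)\cong\bz$, extension over the disk, homotopy invariance, and then additivity plus normalization. Your orientation caveat only reflects a slice convention the paper leaves implicit; if the slice followed by the orbit direction is taken as positively oriented, your natural block ordering gives exactly $\operatorname{sign}\det_\br Dp_k(\lambda_i)$ at each orbit.

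Your details are in fact sounder than the paper's in two places:
\begin{enumerate}[label=(\alph*)]
\item Your collar extension replaces the paper's straight-line family $(1-t)A+tD$, which need not agree with $h_t$ on $\partial B_\varepsilon(\lambda_0)$ or even stay invertible there. For instance, if $A=-\operatorname{diag}(e,1,\ldots,1)$ with $N$ even, then $D=-A$ and the segment vanishes at $t=1/2$.
\item Your simple-zero normal form $p_k$, with the slice computed directly in the full space, makes the normalization axiom genuinely applicable. The paper instead applies it to $\operatorname{diag}(\det_\bc A,1,\ldots,1)$, whose zeros need not be regular or finite in number. It does so after a reduction to the first coordinate that is really a suspension step rather than an additivity step.
\end{enumerate}
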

\begin{proof}
Write $d(\lambda) := \det_\bc A(\lambda)$ and define the block-diagonal matrix map $D: \bc \to M_N(\bc)$ given by
$D(\lambda) := \operatorname{diag}(d(\lambda),1,\dots,1)$. By construction, $\det_\bc D(\lambda) = d(\lambda) = \det_\bc A(\lambda)$ for every $\lambda \in \overline{B}_\varepsilon(\lambda_0)$.
\vs
\noi\textbf{Step 1: Boundary homotopy in $GL_N(\bc)$.}
On the boundary circle $\partial B_\varepsilon(\lambda_0)$, both $A(\lambda)$
and $D(\lambda)$ lie in $GL_N(\bc)$ and share the same determinant.
Since $GL_N(\bc)$ deformation retracts onto $U(N)$ (via Gram--Schmidt
orthonormalization), and the determinant map $\det\colon U(N) \to S^1$ induces
the isomorphism $\pi_1(GL_N(\bc)) \cong \pi_1(U(N)) \cong \bz$, two loops in
$GL_N(\bc)$ with equal determinant winding numbers are freely homotopic.
Therefore, there exists a continuous homotopy
$h_t\colon \partial B_\varepsilon(\lambda_0) \to GL_N(\bc)$, $t \in [0,1]$,
satisfying $h_0 = A|_{\partial B_\varepsilon}$ and
$h_1 = D|_{\partial B_\varepsilon}$.
\vs
\noi\textbf{Step 2: Extension to the disk.}
Consider the boundary of the solid parameter cylinder
$\overline{B}_\varepsilon(\lambda_0) \times [0,1]$, which consists of three pieces: the bottom face ($t=0$), the top face ($t=1$), and the lateral surface ($\lambda \in \partial B_\varepsilon$, $t \in [0,1]$). Define a continuous map on this boundary by assigning it to $A(\lambda)$ on the bottom, to $D(\lambda)$ on the top, and to $h_t(\lambda)$ on the lateral surface. Since $M_N(\bc)$ is a convex subset of a locally convex topological vector space, the straight-line convex combination
$A_t(\lambda) := (1-t)A(\lambda) + t D(\lambda)$
provides an explicit continuous extension to $\overline{B}_\varepsilon(\lambda_0) \times [0,1]$ which satisfies
\begin{enumerate}[label=(\alph*)]
  \item $A_0(\lambda) = A(\lambda)$ and $A_1(\lambda) = D(\lambda)$ for all
    $\lambda \in \overline{B}_\varepsilon(\lambda_0)$;
  \item $A_t(\lambda) = h_t(\lambda) \in GL_N(\bc)$ for all
    $\lambda \in \partial B_\varepsilon(\lambda_0)$ and $t \in [0,1]$.
\end{enumerate}
\vs
\noi\textbf{Step 3: Admissible $S^1$-equivariant deformation.}
Since every complex matrix commutes with the scalar $S^1$-action on $W \simeq N \cdot \mathcal U_m$, the family $A_t$ induces an $S^1$-equivariant deformation of the complemented operator:
\[
\mathcal A_t(\lambda,u) :=
  \bigl(\Theta(\lambda,u),\; A_t(\lambda)u\bigr).
\]
We verify $\mathscr O$-admissibility by checking each component of the boundary
$\partial \mathscr O$:
\begin{enumerate}[label=(\roman*)]
  \item \textit{Lateral boundary} ($\|u\| = \delta$):
the auxiliary function satisfies $\Theta(\lambda,u) > 0$, so $\mathcal A_t(\lambda,u) \neq 0$ regardless of $A_t(\lambda)$.
  \item \textit{Bounding caps} ($|\lambda - \lambda_0| = \varepsilon$, $\|u\| \leq \delta$): by property~(b), the matrix $A_t(\lambda) = h_t(\lambda) \in GL_N(\bc)$, so $A_t(\lambda)u = 0$ forces $u = 0$. At $u = 0$, the auxiliary function satisfies $\Theta(\lambda,0) < 0$, so $\mathcal A_t(\lambda,0) =(\Theta(\lambda,0), 0) \neq 0$.
\end{enumerate}
By the homotopy invariance property~\ref{s2} of the $S^1$-equivariant degree, one has
\begin{equation}\label{eq:det_red_homotopy}
S^1\text{-deg}\bigl(A_\Theta,\;
  \mathscr O\bigr)
= S^1\text{-deg}\bigl(D_\Theta,\; \mathscr O\bigr), \quad D_\Theta(\lambda) :=(\Theta, D(\lambda)).
\end{equation}
\vs
\noi\textbf{Step 4: Localization to the first coordinate.}
Writing $u = (u_1, u_2, \dots, u_N) \in \mathcal U_m^N$, the equation $D(\lambda)u = 0$ reads $d(\lambda)u_1 = 0$ and $u_k = 0$ for $k = 2,\dots,N$. In particular, every nontrivial zero ($u \neq 0$) of the complemented map $D_\Theta(\lambda)$ inside $\mathscr O$ lies in the $S^1$-invariant subspace
$\mathscr O_1 :=
\mathscr O \cap (\bc \times \mathcal U_m \times \{0\}^{N-1})$. By the additivity property~\ref{s1} of the $S^1$-equivariant degree, one has
\begin{equation}\label{eq:det_red_additivity}
\s1deg\bigl(D_\Theta,\;
  \mathscr O \bigr)
= \s1deg\bigl( D_1,\;
  \mathscr O_1\bigr), \quad D_1(\lambda) := (\Theta_1, d(\lambda)),
\end{equation}
where $\Theta_1(\lambda, u_1)
:=\Theta(\lambda, u_1, 0, \dots, 0)$ restricts the auxiliary function to the first coordinate subspace.
\vs
\noi\textbf{Step 5: Normalization.}
The admissible $S^1$-pair $(D_1,\; \mathscr O_1\bigr)$ is a complemented map on the single irreducible $S^1$-representation $\mathcal U_m$. Every nontrivial zero has isotropy exactly $\bz_m$, since $e^{i\theta}u_1 = e^{im\theta}u_1 = u_1$ if and only if
$\theta \in \frac{2\pi}{m}\bz$. By the normalization~\ref{s3} and additivity~\ref{s1} properties of the $S^1$-equivariant degree (applied to the finitely many $S^1$-orbits of zeros inside $\mathscr O_1$), the $S^1$-degree evaluates to
\[
S^1\text{-deg}\bigl( D_1,\;
  \mathscr O_1\bigr)
= \deg\bigl(d,\; B_\varepsilon(\lambda_0)\bigr) \cdot (\bz_m).
\]
Combining \eqref{eq:det_red_homotopy}, \eqref{eq:det_red_additivity}, and the above yields \eqref{eq:det_reduction}. The cumulative degree identity follows immediately by extracting the coefficient of $(\bz_m)$.
\end{proof}
\section{The Twisted Equivariant Degree} \label{sec:appendix_twisted_deg}
Let $\Gamma$ be a compact Lie group and define the product group $G:= S^1 \times \Gamma$. Any closed subgroup in $G$ can be uniquely identified with a triple $(K,\varphi,l)$, called the {\it twisted decomposition} of that subgroup, and consisting of a subgroup $K \leq \Gamma$, a homomorphism $\varphi: K \rightarrow S^1$ and a number $l \in \bn \cup \{0\}$ as follows
\begin{align} \label{def:twisted_subgroups}
 K^{\varphi,l} := \{(z,\gamma) \in S^1 \times K: \varphi(\gamma) = z^l \}.
\end{align}
\begin{remark} \rm
In the case that $l = 0$, the triple $(K,\varphi,0)$ is always (that is, for any subgroup $K \leq \Gamma$ and homomorphism $\varphi: K \rightarrow S^1$) associated with
the {\it product subgroup}
\[
K^{\varphi,0} = S^1 \times K.
\]
\end{remark}
We denote by $\Phi_1^t(G)$ the set of {\it conjugacy classes} of twisted subgroups \eqref{def:twisted_subgroups} with one-dimensional Weyl groups, i.e.
\begin{align} \label{def:twisted_subgroup_conjugacy_classes}
\Phi_1^t(G) := \{ (K^{\varphi,s}) \in \Phi(G) : \dim W(K^{\varphi,s}) =1 \},
\end{align}
and we define the free $\bz$-module generated by $\Phi_1^t(G)$ as follows
\[
A^t_1(G) := \bz[\Phi_1^t(G)].
\]
All elements $a \in A^t_1(G)$ are of the form
\begin{align}\label{eq:arbitrary_At1_element}
  a = \sum\limits_{(H) \in \Phi_1^t(G)} n_H(H),  \quad n_H \in \bz, 
\end{align}
where $n_H = 0$ except for a finite number of conjugacy classes $(H) \in  \Phi_1^t(G)$. The coefficient of any particular twisted orbit type $(H) \in \Phi_1^t(G)$ in \eqref{eq:arbitrary_At1_element} is specified with the notation
\begin{align} \label{def:coefficient_operator_notation}
\operatorname{coeff}^H(a) := n_H.   
\end{align}
There is a natural $A(\Gamma)$-module structure on $A^t_1(G)$ induced by the product $A(\Gamma) \times A^t_1(G) \rightarrow A^t_1(G)$ defined, for any pair of generators $((K),(H)) \in \Phi_0(\Gamma) \times \Phi_1^t(G)$, as follows
\begin{align} \label{def:module_product}
(K) \cdot (H^{\varphi,l}) := \sum\limits_{(L) \leq (K)} n_L (L^{\varphi,l}),
\end{align}
where $n_L \in \bz$ is given by the number of type $(L^{\varphi,l})$-type orbits in the $G$-space $\frac{G}{K \times S^1} \times \frac{G}{H^{\varphi,l}}$. More practically,
a multiplication table for the $A(\Gamma)$-module $A_1^t(G)$ can be computed using the recurrence formula
\begin{align} \label{def:recurrence_formula_module_product}
    n_L = \frac{n(L,K)|W_\Gamma(K)|n(L^{\varphi,l},H^{\varphi,l})|W(H^{\varphi,l})/S^1| - \sum_{(\tilde{L}) > (L)} n_{\tilde L} n(L^{\varphi,l},\tilde L^{\varphi,l}) | W(\tilde L^{\varphi,l})/S^1 |}{| W(L^{\varphi,l})/S^1 |}.
\end{align}
An {\it admissible $G$-pair} $(f, \Om)$ in an orthogonal $G$-representation $U$ consists of an open bounded $G$-invariant set $\Om \subset \br \times U$ and a continuous $G$-equivariant map  $f: \br \times U \rightarrow U$ with $f^{-1}(0) \cap \partial \Om = \emptyset$. We denote by $\mathcal M_1^{G}(U)$ the set of all admissible $G$-pairs in $U$ and by $\mathcal M_1^{G}$ the set of {\it all} admissible $G$-pairs, defined by taking a union over all orthogonal $G$-representations as follows 
\[
\mathcal M_1^{G} := \bigcup_{U} \mathcal M_1^{G}(U).
\]
Whereas an axiomatic definition for the $S^1$-degree was provided in Section \ref{sec:appendix_S1}, we now present a closed-form definition for the twisted $G$-equivariant degree.
\begin{definition} \rm
    The {\it twisted $G$-equivariant degree} is a map $\gdeg: \mathcal M_1^G \rightarrow A_1^t(G)$ assigning to every admissible $G$-pair $(f, \Om) \in \mathcal M_1^G$ the $A_1^t(G)$ element
        \[
\gdeg(f,\Om) := \sum\limits_{(H) \in \Phi_1^t(G)} n_H (H),
\]
where the integer coefficients $n_H \in \bz$ are given by the recurrence formula 
\begin{align} \label{def:twisted_degree_recurrence}
n_H := \frac{\deg_{S^1}(f^H,\Om^H) - \sum_{(L)>(H)} n_L n(H,L) | W(L)/S^1|}{|W(H)/S^1|}.    
\end{align}
\end{definition}
\begin{remark} \rm
The twisted $G$-equivariant degree satisfies the standard degree properties
\begin{enumerate}[label=($T_\arabic*$)]
\item\label{t1} {\bf (Additivity)} If, for any two $G$-invariant disjoint subsets $\Om_1,\Om_2 \subset \Om$, one has $f^{-1}(0) \cap \Om \subset \Om_1 \cup \Om_2$, then
\[
\gdeg(f,\Om) = \gdeg(f,\Om_1) + \gdeg(f,\Om_2).
\]
\item\label{t2} {\bf (Homotopy)} If $h: [0,1] \times \br \times V \rightarrow V$ is an $\Om$-admissible $G$-homotopy, then
\[
\gdeg(h_0,\Om) = \gdeg(h_1,\Om),
\]
where $h_t: \br \times V \rightarrow V$ is used to indicate the map $h_t(\cdot, \cdot) := h(t, \cdot,\cdot)$.
\item\label{t3}
{\bf (Normalization)}: Let $(f, \Omega) \in \mathcal{M}_1^{G}$ be such that $f$ is regular normal in $\Omega$. Assume further that $f^{-1}(0) \cap \Omega = G(w_0)$ consists of a single orbit for some $w_0 \in \Omega$. Then,
\begin{align*}
    \gdeg(f, \Omega) =
    \begin{cases}
    \rho_0 (G_{w_0}) & \text{if } (G_{w_0}) \in \Phi_1^t(G); \\
    0 & \text{otherwise},
    \end{cases}    
\end{align*}
where 
\begin{align}\label{def:orbit_index_ch2}
  \rho_0 := \operatorname{sign} \det(Df(w_0)|_{S_{w_0}})  
\end{align}
and $S_{w_0}$ is the positively oriented slice to the orbit $G(w_0)$ at $w_0$.
\end{enumerate}
\end{remark}
We require two additional properties, which can be derived from the axioms \ref{t1}--\ref{t3}, to effectively employ the twisted $G$-equivariant degree. The first, an existence property:
\begin{enumerate}[label=($T_\arabic*$), start=4]
    \item\label{t4} \textbf{(Existence)} If there is an orbit type $(H) \in \Phi_1^t(G)$ for which $\operatorname{coeff}^H(\gdeg(f,\Om)) \neq 0$, then there exists $x \in \Om$ such that $f(x) = 0$ and $(G_x) \geq (H)$;
\end{enumerate}
and the second, a product property, linked to the product property of the Brouwer equivariant degree:
\begin{lemma} \rm \label{lemm:twisted_degree_product_property}
For any admissible $\Gamma$-pair $(\phi,D) \in \mathcal M^\Gamma$ and for any admissible $G$-pair $(f,\Om) \in \mathcal M^{G}_1$, one has $(f \times \phi, \Om \times D) \in \mathcal M^{G}_1$ and
\[
\gdeg(f \times \phi, \Om \times D) = \Gamma \text{\rm -deg}(\phi,D) \cdot \gdeg(f,\Om).
\]
where $ \Gamma \text{\rm -deg}$ is the $\Gamma$-equivariant degree taking values in the Burnside ring $A(\Gamma)$.
\end{lemma}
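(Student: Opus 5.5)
Since $\Gamma \text{\rm -deg}(\phi,D)$ is defined through its own recurrence on $\Phi_0(\Gamma)$ (via Brouwer degrees of fixed-point restrictions $\phi^K$), and $\gdeg(f,\Om)$ is defined through the recurrence \eqref{def:twisted_degree_recurrence} via cumulative $S^1$-degrees, the plan is to compare both sides coefficientwise on fixed-point spaces. Fix a twisted orbit type $(H)=(K^{\varphi,l})\in\Phi_1^t(G)$. Since $\Gamma$ acts on $D$ through the projection $G\to\Gamma$, the $H$-fixed set of the product is $(\Om\times D)^H=\Om^H\times D^{K}$, and $(f\times\phi)^H=f^H\times\phi^K$. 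Because $S^1$ acts trivially on $D^K$, Lemma~\ref{lemm:s1_degree_product_property} (in its cumulative form, per the remark following the definition of $\deg_{S^1}$) gives
\[
\deg_{S^1}\bigl((f\times\phi)^H,(\Om\times D)^H\bigr)=\deg(\phi^K,D^K)\cdot\deg_{S^1}(f^H,\Om^H).
\]
Admissibility of the product pair is immediate, since a zero on $\partial(\Om\times D)$ would require either $f=0$ on $\partial\Om$ or $\phi=0$ on $\partial D$.

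Next I would write $\Gamma\text{\rm -deg}(\phi,D)=\sum_{(P)}a_P(P)$ and $\gdeg(f,\Om)=\sum_{(Q)}b_Q(Q)$. Expanding the module product via \eqref{def:module_product} expresses the right-hand side as $\sum_{(H)}c_H(H)$ with $c_H=\sum_{P,Q}a_Pb_Q\,n_{H}^{P,Q}$, where $n_H^{P,Q}$ counts $(H)$-orbits in $G/(S^1\times P)\times G/Q$. The key identity to establish is that the ``fixed-point marks'' are multiplicative: for the element $c=\sum c_H(H)$, the quantity $\sum_{(L)\ge(H)}c_L\,n(H,L)|W(L)/S^1|$ (the cumulative inverse of the recurrence) equals $\bigl(\sum_{(P)\ge(K)}a_Pn(K,P)|W(P)|\bigr)\cdot\bigl(\sum_{(Q)\ge(H)}b_Qn(H,Q)|W(Q)/S^1|\bigr)$. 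This holds because each side counts, up to the $S^1$-normalization, $H$-fixed orbits in $G/(S^1\times P)\times G/Q$, and $(X\times Y)^H=X^K\times Y^H$. The recurrence formula \eqref{def:recurrence_formula_module_product} is exactly the Möbius inversion of this mark identity, so I would invoke it rather than recount orbits.

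Finally, the recurrences themselves state that the Burnside marks of $\Gamma\text{\rm -deg}(\phi,D)$ at $(K)$ equal $\deg(\phi^K,D^K)$, and that the twisted marks of $\gdeg(f,\Om)$ at $(H)$ equal $\deg_{S^1}(f^H,\Om^H)$. Combining the three displays, the twisted marks of $\Gamma\text{\rm -deg}(\phi,D)\cdot\gdeg(f,\Om)$ coincide with those of $\gdeg(f\times\phi,\Om\times D)$ at every $(H)$. Because the recurrence is triangular with nonzero diagonal $|W(H)/S^1|$, a downward induction over the total order on $\Phi_1^t(G)$ yields equal coefficients. The main obstacle I expect is the mark-multiplicativity step. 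It requires checking that $n(H,L)|W(L)/S^1|$ correctly counts $H$-fixed $S^1$-orbit classes in $G/L$, and that the normalizations by $|W(\cdot)/S^1|$ versus $|W_\Gamma(K)|$ match exactly as encoded in \eqref{def:recurrence_formula_module_product}. If this bookkeeping becomes delicate, I would first verify it on generators $(P)\cdot(Q)$ and then extend by bilinearity.
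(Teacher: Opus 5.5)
The paper gives no proof of this lemma. In Appendix~\ref{sec:appendix_twisted_deg} it is simply listed as one of the properties that ``can be derived from the axioms'' \ref{t1}--\ref{t3}, with \cite{AED} in the background. Your mark-based argument is correct, and it takes a genuinely different route from the axiomatic one the paper has in mind.

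The axiomatic derivation would go as follows:
\begin{itemize}
\item replace $f$ and $\phi$ by regular normal approximations, so that the zeros of the product form finitely many sets $G(x_i)\times\Gamma(y_j)\cong G/G_{x_i}\times G/(S^1\times\Gamma_{y_j})$;
\item split these into $G$-orbits, which reproduces the orbit counts defining \eqref{def:module_product};
\item observe that the slice at each orbit splits, so the local indices multiply;
\item conclude by additivity and normalization.
\end{itemize}

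You instead work directly with the recurrence \eqref{def:twisted_degree_recurrence}, which is what the paper actually takes as the definition of $\gdeg$. Your only analytic input is the cumulative form of Lemma~\ref{lemm:s1_degree_product_property} on $\Om^H\times D^K$. Everything else is the multiplicativity of marks under the module product, which is exactly what \eqref{def:recurrence_formula_module_product} encodes, plus the triangularity of the mark map.

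This buys you two things. It avoids equivariant transversality and slice-orientation bookkeeping. It also treats positive-dimensional $\Gamma$ uniformly. There, a product of two orbits contains infinitely many orbits, some with $\dim W\ge 2$, and the normal-approximation argument needs extra care. What the axiomatic route buys in exchange is a direct explanation of why the orbit-count module structure is the right one.

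Three details are worth making explicit.

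\begin{enumerate}
\item To read off $\deg(\phi^K,D^K)$ as the Burnside mark at $(K)=(\pi_\Gamma(H))$, you need $(K)\in\Phi_0(\Gamma)$. This holds because $N_G(K^{\varphi,l})=S^1\times N_\Gamma(K)_\varphi$, and the stabilizer $N_\Gamma(K)_\varphi$ of $\varphi$ has finite index in $N_\Gamma(K)$, since characters of $K$ form a discrete set. Hence $\dim W_G(H)=1+\dim W_\Gamma(K)$ for $l\ge 1$.

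\item The mark identity is needed at every $(H)$, including types with $H\cap S^1=\bz_{l'}$ strictly smaller than $Q\cap S^1=\bz_l$. However, \eqref{def:recurrence_formula_module_product} only pins down marks at types $(L^{\varphi,l})$ carrying the twist of $Q$. If you rely only on that recurrence, close the gap as follows. The subgroup $\bz_l$ is central and lies in $Q$ and in every type $M$ occurring in $(P)\cdot(Q)$. Hence $n(H,M)=n(H\bz_l,M)$ and $\pi_\Gamma(H\bz_l)=\pi_\Gamma(H)$. This reduces the general case to the one the recurrence covers. Alternatively, use the orbit count you sketched.

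\item Phrase the final step as choosing a maximal element of the finite support of the difference of the two sides. A downward induction over all of $\Phi_1^t(G)$ is not well founded, since it can contain infinite ascending chains, e.g.\ $(\bz_1)<(\bz_2)<(\bz_4)<\cdots$ when $\Gamma$ is trivial.
\end{enumerate}
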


\subsection{A Computational Formula for the Twisted $G$-Equivariant Leray-Schauder Degree} \label{sec:appendix_twisted_comp_form}
Let $V$ be a finite dimensional, orthogonal $\Gamma$-representation and let 
$\mathcal H$ be an isometric Banach $G$-representation of maps taking values in $V$. For each $m \in \bn$ we denote by $\mathcal U_m \simeq \bc$, the irreducible $S^1$-representation equipped with the {\it $m$-folding $S^1$-action}
\begin{align*} 
    e^{i \theta}w := e^{i m\theta} w, \quad \theta \in S^1, \; w \in \mathcal U_m,
\end{align*}
and by $\mathcal U_0 \simeq \br$, the irreducible $S^1$ representation on which $S^1$ acts trivially. Assuming that a complete list of irreducible $\Gamma$-representations $\{ \mathcal V_j \}_{j = 0}^r$ are made available, $\mathcal H$ has a
$G$-isotypic decomposition of the form
\begin{align*} 
    \mathcal H = \bigoplus_{m \geq 0} \bigoplus_{j = 0}^r  \mathcal H_{m,j},
\end{align*}
where each {\it $G$-isotypic component} $\mathcal H_{m,j}$  is modeled on the irreducible $G$-representation $\mathcal V_{m,j} := \mathcal U_m \otimes \mathcal V_j$.
\vs
Notice that, for $m = 0$, the irreducible $G$-representation $\mathcal V_{0,j}$ coincides with the irreducible $\Gamma$-representation $\mathcal V_j$. On the other hand, every irreducible $G$-representation $\mathcal V_{m,j}$ with $m > 0$ is associated with an admissible $G$-pair $(b_{m,j},\mathscr O_{m,j}) \in \mathcal M^{G}_1(\mathcal V_{m,j})$, called the {\it basic pair} on $\mathcal V_{m,j}$,  consisting of 
the $G$-invariant set,
\begin{align*}  
\mathscr O_{m,j} := \{ (t,v) \in \br \times \mathcal V_{m,j} : \frac{1}{2} < \|v\|_{\mathcal H} < 2, \; |t| < 1 \},
\end{align*}
and the $\mathscr O_{m,j}$-admissible $G$-equivariant map
\begin{align*} 
    b_{m,j}(t,v) := (1-\|v\|_{\mathcal H} + it) \cdot v.
\end{align*}
The {\it basic twisted degree} on $\mathcal V_{m,j}$, denoted $\deg_{\mathcal V_{m,j}} \in A_1^t(G)$, is defined as the twisted $G$-equivariant degree
\begin{align*} 
\deg_{\mathcal V_{m,j}} := \gdeg(b_{m,j}, \mathscr{O}_{m,j}).
\end{align*}
The following result can be derived from the recurrence formula \eqref{def:twisted_degree_recurrence}.
\begin{lemma} \rm
Given an irreducible  $G$-representation $\mathcal V_{m,j}$, the basic degree
    \[
    \deg_{\mathcal V_{m,j}} = \sum\limits_{(H) \in \Phi_1^t(G)} n_H (H),
    \]
is specified by the coefficients
    \[
    n_H := \frac{\frac{1}{2}\dim \mathcal V_{m,j}^H - \sum_{(L)>(H)} n_L n(H,L)|W(L)/S^1|}{|W(H)/S^1|}.
    \]
\end{lemma}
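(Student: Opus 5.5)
The plan is to apply the recurrence formula \eqref{def:twisted_degree_recurrence} to the basic pair $(b_{m,j},\mathscr O_{m,j})$. This gives
\[
n_H=\frac{\deg_{S^1}(b_{m,j}^H,\mathscr O_{m,j}^H)-\sum_{(L)>(H)} n_L\, n(H,L)\,|W(L)/S^1|}{|W(H)/S^1|}.
\]
So the whole task reduces to one identity:
\[
\deg_{S^1}(b_{m,j}^H,\mathscr O_{m,j}^H)=\tfrac12\dim\mathcal V_{m,j}^H = \dim_\bc \mathcal V_{m,j}^H,
\]
valid for every $(H)\in\Phi_1^t(G)$.

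First, I would note that $S^1$ is central in $G$. Hence $\mathcal V_{m,j}^H$ is an $S^1$-invariant subspace, and, exactly as in Step~1 of the proof of Theorem~\ref{thm:generalized_guarantee}, it is isomorphic to $N\cdot\mathcal U_m$ with $N=\dim_\bc\mathcal V_{m,j}^H$. The restriction $b_{m,j}^H(t,v)=(1-\|v\|+it)v$ is then a map on $\br\times N\mathcal U_m$ that is complex-scalar in $v$.

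Second, I would compute its $S^1$-degree. The cleanest route is to homotope it to a complemented linear operator and invoke Lemma~\ref{lemm:determinantal_reduction}. Introduce the complex parameter $\lambda=(1-s)+it$, where $s=\|v\|$, so that $b(t,v)=\lambda(s,t)\,v$. Then $\mathscr O^H_{m,j}$ is a product of the $(t,s)$-rectangle $\{|t|<1,\ \tfrac12<s<2\}$ with the sphere directions. Standard identification (as in \cite{AED}) of the basic map with a complemented map $(\Theta,\lambda\,\id_{\bc^N})$ on an isolating cylinder around $\lambda_0=0$ takes $\Theta(t,v)=\|v\|-1$ up to orientation, or more precisely uses $1-\|v\|$ as the auxiliary-type radial coordinate.

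Lemma~\ref{lemm:determinantal_reduction} then gives
\[
S^1\text{-deg}=\deg(\lambda^N,B_\varepsilon(0))\cdot(\bz_m)=N\,(\bz_m).
\]
Hence the cumulative degree equals $N=\tfrac12\dim\mathcal V^H_{m,j}$. If $N=0$, the fixed-point map has no zeros and the degree is $0$, which is consistent with the formula.

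As an alternative check, I would perturb $b^H$ to be regular normal. Its zeros form the single $S^1$-invariant sphere $\{t=0,\ \|v\|=1\}$. Splitting this sphere into free $S^1$-orbits by a generic $S^1$-equivariant perturbation yields $N$ orbits of type $(\bz_m)$, each of index $+1$ by the normalization axiom~\ref{s3}. This matches the Hopf-degree count of the map $S^{2N-1}\to S^{2N-1}$.

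The main obstacle is the second step, specifically getting the orientation and sign conventions right. The issue is matching the annular domain $\mathscr O_{m,j}$ with the cylinder $\mathscr O$ of Section~\ref{sec:bif_framework}, so that the winding of $\lambda\mapsto\lambda^N$ comes out $+N$ rather than $-N$. I would first try the explicit orientation convention of the slice in~\ref{s3}, checking the case $N=1$ directly: there is one orbit, and the slice derivative $\operatorname{diag}(-1,1)$ in the $(s,t)$ coordinates should be compared with the chosen orientation of $\br\times\bc$. I would then conclude by multiplicativity of winding under $\det=\lambda^N$. With the identity in hand, substituting it into the recurrence gives exactly the stated formula for $n_H$.
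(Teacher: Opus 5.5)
Your proposal is correct and follows the paper's route. The paper only remarks that the lemma follows from the recurrence formula \eqref{def:twisted_degree_recurrence} applied to the basic pair, and leaves implicit exactly the identity $\deg_{S^1}(b_{m,j}^H,\mathscr O_{m,j}^H)=\dim_\bc\mathcal V_{m,j}^H$ that you isolate.

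Of your two justifications of that identity, the direct regular-normal perturbation is the one that actually closes the argument. For example, replacing the scalar $1-\|v\|+it$ by $\operatorname{diag}(1-\|v\|+it+\epsilon c_k)$ with distinct real $c_k$ yields exactly $N$ regular orbits of type $(\bz_m)$, each of index $+1$ when the slice is oriented parameter-first. By contrast, Lemma~\ref{lemm:determinantal_reduction} concerns complemented maps on cylinders in $\bc\times W$. Applying it to the one-parameter basic pair on $\br\times W$ therefore requires an extra suspension step, which your sketch does not really supply.
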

Let $a: S^1 \rightarrow GL^{G}(\mathcal H)$ be a continuous family of $G$-equivariant invertible linear operators. 
We are interested in a computational formula for the twisted $G$-equivariant Leray-Schauder degree of an admissible $G$-pair $(T,\mathscr D) \in \mathcal M^{G}_1(\mathcal H)$ consisting of the $G$-invariant set
\[
\mathscr D := \{ (\lambda,v) \in \bc \times \mathcal H : \| v \|_{\mathcal H} < 2, \; \frac{1}{2} < | \lambda | < 2 \},
\]
and the $\mathscr D$-admissible $G$-equivariant operator $T: \bc \times \mathcal H \rightarrow \br \times \mathcal H$ given by
\[
T(\lambda,v) := \left(\Theta(\lambda,v), a\left(\frac{\lambda}{|\lambda|}\right)v\right),
\]
where $\Theta: \overline{\mathscr D} \rightarrow \br$ is any $G$-invariant function satisfying
\begin{align*} 
\begin{cases}
\Theta(\lambda,0) < 0 \quad & \text{ for } |\lambda | = 2 \\
\Theta(\lambda,u) > 0  \quad & \text{ for } |\lambda | = \frac{1}{2}.
\end{cases}    
\end{align*}
\begin{remark}
Although admissible
$G$-pairs are formally defined on spaces of the form $\mathbb R\times \mathcal H \to \mathcal H$, a two-parameter cylinder $\mathbb C\times\mathcal H$ is obtained by absorbing one real parameter into
the representation.  Namely, put $\widehat{\mathcal H}:=\mathbb R\oplus\mathcal H$, where $G$ acts trivially on the additional $\mathbb R$-summand, such that $\mathbb R\times\widehat{\mathcal H} = \mathbb R\times(\mathbb R\oplus\mathcal H)
\simeq \mathbb C\times\mathcal H$.
Under this identification, $T$ is an $\mathscr D$-admissible $G$-map from $\mathbb R\times\widehat{\mathcal H}$ to $\widehat{\mathcal H}$. This is the standard formulation used in the $S^1$-equivariant and twisted equivariant degree treatments of Hopf
bifurcation, where the second parameter is typically the unknown frequency.
\end{remark}
For convenience, we adopt the notations $\mathscr D_{m,j} := \{(\lambda,v) \in \mathscr D : v \in \mathcal H_{m,j} \}$, $a_{m,j}:= a|_{\mathcal H_{m,j}} : \mathcal H_{m,j} \rightarrow \mathcal H_{m,j}$ and $T_{m,j}(\lambda,v) := (1 - |\lambda|, a_{m,j}(\lambda) \cdot v)$.
We are now in a position to present (see \cite{AED}, Chapter 4) the first of a set of two essential tools, the so-called Splitting Lemma, for this purpose.
\begin{lemma}\label{lemm:splitting_lemma} \rm
For any $m,m' > 0$, $j,j' \in \{0,1,\ldots,r\}$, one has
\[
\gdeg(T_{m,j} \times T_{m',j'},\mathscr D_{m,j} \times \mathscr D_{m',j'}) =  \gdeg(T_{m,j}, \mathscr D_{m,j}) + \gdeg(T_{m',j'}, \mathscr D_{m',j'})
\]
\end{lemma}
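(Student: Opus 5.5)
The plan is to work entirely through the recurrence formula \eqref{def:twisted_degree_recurrence}. That formula defines the coefficients of $\gdeg$ recursively, downward in the total order on $\Phi_1^t(G)$. It is linear in the cumulative $S^1$-degrees $\deg_{S^1}(f^H,\Om^H)$, with coefficients $n(H,L)$ and $|W(L)/S^1|$ that do not depend on the map. It therefore suffices to prove, for every $(H)\in\Phi_1^t(G)$, the identity
\[
\deg_{S^1}\bigl((T_{m,j}\times T_{m',j'})^H,(\mathscr D_{m,j}\times\mathscr D_{m',j'})^H\bigr)=\deg_{S^1}(T_{m,j}^H,\mathscr D_{m,j}^H)+\deg_{S^1}(T_{m',j'}^H,\mathscr D_{m',j'}^H).
\]
Given this identity, downward induction on $(H)$ shows that each coefficient $n_H$ of the left-hand degree is the sum of the corresponding coefficients on the right: the subtracted sums over $(L)>(H)$ are already additive by the inductive hypothesis. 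Here $T_{m,j}\times T_{m',j'}$ means the map $(\lambda,v,w)\mapsto(\Theta,a_{m,j}(\lambda/|\lambda|)v,a_{m',j'}(\lambda/|\lambda|)w)$, which has one common complementing function and one common parameter $\lambda$.

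To establish the identity, I first note that $S^1$ is central in $G$. Hence $\mathcal H_{m,j}^H$ and $\mathcal H_{m',j'}^H$ are $S^1$-representations isomorphic to $N\,\mathcal U_m$ and $N'\,\mathcal U_{m'}$, with $m,m'>0$. The restrictions $a_{m,j}^H$ and $a_{m',j'}^H$ commute with the $S^1$-action, so they are complex-linear with respect to the complex structures induced by the infinitesimal generator of the action. Both sides are then cumulative $S^1$-degrees of complemented loops of complex matrices over the annulus. Restricting to the annulus $\tfrac12<|\lambda|<2$ does not affect the argument.

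Next I use the homotopy argument of Lemma~\ref{lemm:determinantal_reduction}, Steps~1--3, separately on each block. Since $\pi_1(GL_N(\bc))\cong\bz$ via the determinant, I deform $a_{m,j}^H$ through invertible loops to $\operatorname{diag}(d_1,1,\dots,1)$, where $d_1=\det_\bc a_{m,j}^H$. In the same way I deform $a_{m',j'}^H$ to $\operatorname{diag}(d_2,1,\dots,1)$, where $d_2=\det_\bc a_{m',j'}^H$. Because the deformations are block diagonal, they give an admissible $S^1$-homotopy of the product map. Homotopy invariance \ref{s2} then lets me replace all three maps in the identity by these normal forms.

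I expect the main obstacle to be separating the zeros. The zeros of the deformed product satisfy $\Theta=0$, $d_1(\lambda)u_1=0$ and $d_2(\lambda)w_1=0$, with all other coordinates zero. If $d_1$ and $d_2$ have a common zero, a zero orbit could have both $u_1\neq0$ and $w_1\neq0$, and its isotropy would be $\bz_{\gcd(m,m')}$ rather than $\bz_m$ or $\bz_{m'}$. To rule this out, I would extend $d_1$ and $d_2$ into the interior of the parameter domain and perturb them by a small homotopy fixed near the boundary. After the perturbation the zeros of $d_1$ and $d_2$ are isolated, simple and disjoint, and the perturbation does not change either winding number. I then surround each zero by a small $S^1$-invariant neighbourhood. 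Near a zero of $d_1$ the map $d_2$ is invertible, so every solution there has $w=0$. By additivity \ref{s1} and the computation in Step~5 of Lemma~\ref{lemm:determinantal_reduction}, that neighbourhood contributes exactly what it contributes to $\deg_{S^1}(T_{m,j}^H)$; the zeros of $d_2$ are treated symmetrically. Summing over all neighbourhoods gives the identity, since both sides equal $\deg(d_1)+\deg(d_2)$, the winding number of $d_1d_2$. The recurrence argument of the first paragraph then completes the proof.
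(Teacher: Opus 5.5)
The paper gives no proof of this lemma; it quotes it from \cite{AED}, Chapter~4. Your argument therefore has to stand on its own.

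\textbf{What works.} The first half is sound. The recurrence \eqref{def:twisted_degree_recurrence} is linear in the cumulative degrees, with coefficients that do not depend on the map. Fixed-point sets of the product are products of fixed-point sets. On $\mathcal H_{m,j}^H$ the loop $a^H_{m,j}$ is a loop of complex matrices, and it can be deformed through invertible loops to $\operatorname{diag}(d_1,1,\dots,1)$. On the annulus this needs only Step~1 of Lemma~\ref{lemm:determinantal_reduction}, because $a$ depends only on $\lambda/|\lambda|$.

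\textbf{The gap.} It is the sentence ``restricting to the annulus does not affect the argument.'' For $T_{m,j}(\lambda,v)=(1-|\lambda|,a_{m,j}(\lambda/|\lambda|)v)$ on $\mathscr D_{m,j}$, the loop $a$ is invertible on the whole annulus. Hence $d_1$ and $d_2$ have no zeros in the domain at all. The zero set of $T_{m,j}$, of $T_{m',j'}$ and of their product is the circle $\{|\lambda|=1\}\times\{0\}$, which consists of $G$-fixed points.

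So there are no nontrivial orbits for the normalization count of Step~5 to detect. The ``common zeros'' obstacle you single out never arises. The real difficulty, that every zero lies in the $S^1$-fixed set, is not addressed.

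Extending $d_1,d_2$ ``into the interior'' does not help by itself. The hole $|\lambda|\le\tfrac12$ is not part of $\mathscr D$. Even on the filled-in domain, the complementing function $1-|\lambda|$ is positive wherever the extended determinants vanish, so no new zeros appear.

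\textbf{The missing bridge.} You need to pass from the annulus problem to an isolating-cylinder problem, as follows.
\begin{enumerate}
\item After your normalization, choose extensions $\tilde d_1,\tilde d_2$ over $\overline{B_2(0)}$ that agree with $d_1,d_2$ for $|\lambda|\ge\tfrac12$. Give them finitely many simple, mutually disjoint zeros in $|\lambda|<\tfrac12$. Let $\tilde a$ be the corresponding block-diagonal extension.
\item On $D:=\{|\lambda|<2,\ \|v\|<2\}$, the map $(1-|\lambda|,\tilde a(\lambda)v)$ has exactly the same zeros as the annulus map. Its degree therefore equals the annulus degree by additivity.
\item The homotopy $\Theta_t=(1-t)(1-|\lambda|)+t(\|v\|-1)$ is $D$-admissible, with $\|v\|$ replaced by $\|(v,w)\|$ for the product. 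On $|\lambda|=2$ a zero forces $v=0$, where $\Theta_t=-1$. On $\|v\|=2$ a zero forces $|\lambda|<\tfrac12$, where $\Theta_t>0$.
\item At $t=1$ you are in the cylinder setting of Lemma~\ref{lemm:determinantal_reduction}. The zeros are now the nontrivial orbits over the zeros of $\tilde d_1$ and $\tilde d_2$. Your local count applies verbatim, both for the product and for each factor with the same extensions.
\end{enumerate}
With this step inserted the proof is complete.

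What you actually wrote is a proof of the cylinder version of the lemma. That is, in fact, the form in which the paper uses the Splitting Lemma in Step~1 of Theorem~\ref{thm:generalized_guarantee}.
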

The second tool essential to the computation of $\gdeg(T,\mathscr D)$ provides a means to relate the twisted $G$-equivariant Leray-Schauder degrees $\gdeg(T_{m,j}, \mathscr D_{m,j}) \in A_1^t(G)$ with the corresponding basic degree $\deg_{\mathcal V_{m,j}}$.
\begin{lemma} \rm
For any $m > 0$, $j \in \{0,1,\ldots,r\}$, one has
    \[
    \gdeg( T_{m,j}, \mathscr D_{m,j}) = \deg(\det\nolimits_\bc(a_{m,j})) \deg_{\mathcal V_{m,j}}.
    \]
\end{lemma}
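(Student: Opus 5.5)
The statement to prove is the last lemma: for $m>0$,
\[
\gdeg(T_{m,j},\mathscr D_{m,j})=\deg(\det\nolimits_\bc(a_{m,j}))\,\deg_{\mathcal V_{m,j}} .
\]
The approach is a homotopy that reduces $a_{m,j}$ to a standard diagonal loop, followed by a comparison with the basic pair.

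\textbf{Step 1: the Schur form of $a_{m,j}$.} By Schur's Lemma, on $\mathcal H_{m,j}\simeq \ell\,\mathcal V_{m,j}$ we can write $a_{m,j}(\lambda)=\id_{\mathcal V_{m,j}}\otimes B(\lambda)$. Here $B:S^1\to GL_\ell(\mathbb F)$, where $\mathbb F$ is the commutant of $\mathcal V_{m,j}$. For $m>0$ the commutant contains the complex structure induced by the $S^1$-action, so $\mathbb F\supseteq\bc$.

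I will treat the absolutely-$\bc$ case, so that $B$ is a loop in $GL_\ell(\bc)$. The case $\mathbb F=\mathbb H$ is handled the same way, using $\pi_1(GL_\ell(\mathbb H))=0$ together with the convention that $\det_\bc$ then has zero winding.

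\textbf{Step 2: homotopy to a diagonal loop.} As in Step 1 of the proof of Lemma~\ref{lemm:determinantal_reduction}, $\pi_1(GL_\ell(\bc))\cong\bz$ via the winding of $\det$. Hence $B$ is homotopic through invertible loops to
\[
D(\lambda)=\operatorname{diag}\bigl((\lambda/|\lambda|)^{k},1,\dots,1\bigr), \qquad k=\deg(\det\nolimits_\bc a_{m,j})
\]
(up to the normalization $\det_\bc a_{m,j}=(\det B)^{\dim_\bc\mathcal V_{m,j}}$, which I will fix so that $k$ is the degree of $\det B$).

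The induced homotopy $\id\otimes h_t$ is $G$-equivariant and invertible on the whole annulus $\tfrac12\le|\lambda|\le2$. It therefore has no zeros with $v\ne0$ on $\partial\mathscr D$, and $\Theta$ controls $v=0$ there. So it is $\mathscr D$-admissible, and by \ref{t2} we may replace $a_{m,j}$ by $\id\otimes D$.

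\textbf{Step 3: split off the trivial blocks.} Split $\mathcal H_{m,j}=\mathcal V_{m,j}\oplus(\ell-1)\mathcal V_{m,j}$. On the second summand the operator is the identity, so all zeros lie in the first summand. Additivity \ref{t1} then gives
\[
\gdeg(T_{m,j},\mathscr D_{m,j})=\gdeg\bigl((\Theta,(\lambda/|\lambda|)^k v),\ \mathscr D\cap(\bc\times\mathcal V_{m,j})\bigr).
\]
Equivalently, this is the Splitting Lemma~\ref{lemm:splitting_lemma} with the identity pieces contributing zero.

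\textbf{Step 4: reduce to $k=\pm1$ and compare with the basic pair.} For the winding-$k$ loop, I split the annulus into $|k|$ sectors. Concretely, perturb $z^k$ to $\prod_i(z-z_i)$ with distinct roots $z_i$, deform $\Theta$ accordingly, and use \ref{t1}. This writes the degree as $|k|\cdot\operatorname{sign}(k)$ times the $k=1$ degree, after noting that conjugation $z\mapsto\bar z$ flips orientation and hence the sign for $k<0$.

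For $k=1$, the map $(\lambda,v)\mapsto(\Theta,(\lambda/|\lambda|)v)$ is identified with the basic map $b_{m,j}(t,v)=(1-\|v\|+it)v$. The identification uses a $G$-equivariant change of coordinates $\lambda\leftrightarrow (1-\|v\|)+it$ on the annulus, together with a homotopy of the auxiliary function; both are equivariant because they act only on the trivial $\bc$-parameter and on $\|v\|$.

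\textbf{Main obstacle.} I expect Step 4 to be the hardest: matching orientations and domains between $\mathscr D$ and $\mathscr O_{m,j}$ so that the sign comes out as exactly $+1$. As a fallback I would verify this directly through the recurrence formula~\eqref{def:twisted_degree_recurrence}. By Lemma~\ref{lemm:determinantal_reduction}, the cumulative $S^1$-degree of each $H$-fixed restriction equals $k\cdot\dim_\bc\mathcal V_{m,j}^H=k\cdot\tfrac12\dim\mathcal V_{m,j}^H$. This matches $k$ times the recurrence data defining $\deg_{\mathcal V_{m,j}}$, and since the recurrence coefficients are linear in their data, the two elements agree.
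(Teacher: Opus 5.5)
The paper does not prove this lemma; it is quoted from \cite{AED}, Chapter~4, so there is no in-paper proof to match. Judged on its own, your proposal does contain a complete proof, but it is the argument you call a fallback, not Steps~2--4.

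\textbf{The main route (Steps 3--4) is not complete as written.}

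In Step~3, additivity \ref{t1} only excises open invariant subsets of the \emph{same} space. It cannot replace $\mathscr D_{m,j}$ by the non-open slice $\mathscr D\cap(\bc\times\mathcal V_{m,j})$. That reduction needs a suspension property, or a Splitting Lemma for an arbitrary invariant splitting $\mathcal V_{m,j}\oplus(\ell-1)\mathcal V_{m,j}$ together with vanishing of the degree for the constant identity loop. None of these is among \ref{t1}--\ref{t4}, and Lemma~\ref{lemm:splitting_lemma} is stated only for the full isotypic pieces $T_{m,j}$, $T_{m',j'}$.

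In Step~4, your operator $\id\otimes D(\lambda)$ is invertible for every $\lambda$ in the annulus. Hence every zero of $(\Theta,\id\otimes D(\lambda)v)$ on $\mathscr D$ lies in $\{v=0\}$, for any admissible $\Theta$, and there are no orbits for a sector argument to separate. Moving roots into the annulus does not fix this: it forces $\Theta(\cdot,0)$ to vanish along a curve of $G$-fixed zeros, whose contribution you would still have to compute. The root-splitting works cleanly only on a disk cylinder with $\Theta=\|v\|-1$.

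Finally, setting $\operatorname{Re}\lambda=1-\|v\|$ is a restriction to a hypersurface, not a change of coordinates. The map $T$ goes $\bc\times\mathcal V_{m,j}\to\br\times\mathcal V_{m,j}$, while $b_{m,j}$ goes $\br\times\mathcal V_{m,j}\to\mathcal V_{m,j}$. So a suspension by a trivial line is needed once more, and the sign you could not pin down sits exactly there.

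\textbf{Your fallback is the proof to give.} It is the same mechanism as Step~1 in the proof of Theorem~\ref{thm:generalized_guarantee}. The paper \emph{defines} $\gdeg$ by the recurrence \eqref{def:twisted_degree_recurrence}, and the basic-degree lemma supplies the data $\tfrac12\dim\mathcal V_{m,j}^H$. It therefore suffices to show
\[
\deg_{S^1}(T_{m,j}^H,\mathscr D_{m,j}^H)=k\dim_\bc\mathcal V_{m,j}^H \quad\text{for every } (H)\in\Phi_1^t(G).
\]
Downward induction, with the same $n(H,L)$ and $|W(\cdot)/S^1|$, then gives $\operatorname{coeff}^H=k\cdot\operatorname{coeff}^H(\deg_{\mathcal V_{m,j}})$. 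Steps~2--4 become unnecessary. Make three points explicit.

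First, $k$ must be the winding number of $\det B$ for the Schur matrix $B$. This is not a normalization you get to fix. The full complex determinant of $a_{m,j}$ on $\mathscr H_{m,j}$ winds $\dim_\bc\mathcal V_{m,j}\cdot k$ times, and with that reading the identity fails whenever $\dim_\bc\mathcal V_{m,j}>1$. The $\det B$ reading is the one consistent with Corollary~\ref{cor:standard_basic_degree_factorization}, whose coefficients are $\rho_{m,j}$. Since $a_{m,j}^H=\id_{\mathcal V_{m,j}^H}\otimes B$ on $\mathscr H_{m,j}^H\simeq\ell\,\mathcal V_{m,j}^H$, the relevant winding is $k\dim_\bc\mathcal V_{m,j}^H$.

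Second, Lemma~\ref{lemm:determinantal_reduction} is stated on a disk cylinder for a family defined on the closed disk. Here $\mathscr D$ is an annulus cylinder and $a$ lives only on $S^1$. Bridge the two as follows:
\begin{itemize}
\item extend by $A(\lambda):=\tfrac{|\lambda|}{2}\,a(\lambda/|\lambda|)$ on $|\lambda|\le2$, with $A(0)=0$;
\item use $\Theta=1-|\lambda|+\|v\|$, which is a valid auxiliary function on $\{|\lambda|<2,\ \|v\|<2\}$ and is positive for $|\lambda|\le\tfrac12$;
\item excise $\{|\lambda|\le\tfrac12\}$ by \ref{s1};
\item homotope admissibly on $\mathscr D$ back to $(1-|\lambda|,\,a(\lambda/|\lambda|)v)$.
\end{itemize}

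Third, your quaternionic case is vacuous. For $m>0$ every $G$-map commutes with the $S^1$-complex structure, so the commutant is exactly $\bc$.
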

Combining the product property of the degree with the splitting lemma gives the standard basic-degree factorization used in the twisted equivariant degree computations.
\begin{corollary}
\label{cor:standard_basic_degree_factorization}
Let $(\lambda_0,0)$ be an isolated dynamic critical point. Suppose the stationary part contributes negative stationary multiplicities $m_j^-(\lambda_0)$ and the non-stationary blocks have winding numbers $\rho_{m,j}(\lambda_0)$. Then the local bifurcation invariant factors as
\[
\omega_G(\lambda_0)
    = \prod_{j=0}^{r}
    \left(\deg_{\mathcal V_j}\right)^{m_j^-(\lambda_0)}
    \cdot
    \sum_{m>0}\sum_{j=0}^{r}
    \rho_{m,j}(\lambda_0)\deg_{\mathcal V_{m,j}}.
\]
\end{corollary}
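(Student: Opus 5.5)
The strategy is to obtain this corollary as a direct consequence of the two standard results in the appendix, the product property (Lemma~\ref{lemm:twisted_degree_product_property}) and the Splitting Lemma (Lemma~\ref{lemm:splitting_lemma}), applied to the complemented linear operator $\mathscr A_\Theta$ on the isolating cylinder $\mathscr O$.

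First, we reduce to finite dimensions. Since $\mathscr A(\lambda)$ is a compact perturbation of the identity and $\lambda_0$ is isolated, we split $\mathscr H$ $G$-equivariantly into two pieces. The first is a finite-dimensional $G$-invariant subspace $E$. It contains the generalized eigenspaces of all eigenvalues of $\mathscr A(\lambda_0)$ that lie on the negative real axis or at zero. The second is its $G$-invariant complement, on which $\mathscr A(\lambda)$ can be $G$-homotoped, uniformly for $\lambda\in\overline{B_\varepsilon(\lambda_0)}$, to the identity through isomorphisms. The homotopy property \ref{t2} then replaces $\mathscr A_\Theta$ by $(\Theta,\mathscr A|_E)\times \id$. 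Next, we write $E=E_0\oplus E_+$, where $E_0$ collects the stationary ($m=0$) isotypic components and $E_+$ the dynamic ones. Because the point is dynamic, $\mathscr A_0(\lambda):=\mathscr A(\lambda)|_{E_0}$ is invertible on all of $\overline{B_\varepsilon(\lambda_0)}$ after shrinking $\varepsilon$. This gives a homotopy, admissible on the cylinder, to the product map $(\Theta,\mathscr A|_{E_+})\times \mathscr A_0(\lambda_0)$. Here $\mathscr A_0(\lambda_0)$ is $\lambda$-independent and lives on a ball $D\subset E_0$, and $\Gamma$ acts on $E_0$ while $S^1$ acts trivially.

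Second, we apply Lemma~\ref{lemm:twisted_degree_product_property}. This gives
\[
\omega_G(\lambda_0)=\Gammadeg(\mathscr A_0(\lambda_0),D)\cdot \gdeg\big((\Theta,\mathscr A|_{E_+}),\mathscr O_+\big).
\]
The first factor is the equivariant degree of an isomorphism. By the standard Burnside ring computation, it equals $\prod_j(\deg_{\mathcal V_j})^{m_j^-(\lambda_0)}$. The argument is to homotope the isomorphism within $GL^\Gamma(E_0)$ to $-\id$ on the negative spectral subspaces and to $\id$ elsewhere, and then use multiplicativity of the Brouwer equivariant degree over direct sums.

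Third, we handle the dynamic factor. We decompose $E_+=\bigoplus_{m\ge1,j}E_{m,j}$. Via the remark absorbing one real parameter, the cylinder $\mathscr O_+$ is excisively equivalent to an annular domain of the form $\mathscr D$, since the auxiliary function only distinguishes the core $u=0$. On this domain, the Splitting Lemma (Lemma~\ref{lemm:splitting_lemma}) shows that the degree is additive over the isotypic blocks. On each block $\mathcal H_{m,j}$, the operator commutes with $G$, so by Schur it has the form $\id_{\mathcal V_{m,j}}\otimes B_{m,j}(\lambda)$. The stated lemma relating $\gdeg(T_{m,j},\mathscr D_{m,j})$ to the basic degree then yields $\deg(\det_\bc B_{m,j})\deg_{\mathcal V_{m,j}}$. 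As in Step~1 of the proof of Theorem~\ref{thm:generalized_guarantee}, $\deg(\det_\bc B_{m,j})$ equals the sum of the winding numbers of the critical eigenvalues, which is $\rho_{m,j}(\lambda_0)$. The noncritical eigenvalues stay away from $0$ on the disk and contribute zero winding. Summing over $(m,j)$ gives the second factor.

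I expect the main obstacle to be the passage from $\mathscr O$ to the product domain in the first step. The product property requires a genuine product $\Om\times D$, with the stationary factor independent of $\lambda$ and no parameter on the $\Gamma$ side. So I would first deform $\mathscr A_0(\lambda)$ to the constant $\mathscr A_0(\lambda_0)$ through isomorphisms, which works because the disk is contractible. I would also take $\Theta$ to depend only on $(\lambda,\|u_+\|)$. Admissibility on the base boundary then needs the invertibility of the full operator, which holds at every stage. With these choices, excision lets us replace $\mathscr O$ by $\mathscr O_+\times D$.
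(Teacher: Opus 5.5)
Your proposal is correct and follows the same route as the paper, whose proof is a one-line combination of the product property (Lemma~\ref{lemm:twisted_degree_product_property}) for the stationary factor with the Splitting Lemma (Lemma~\ref{lemm:splitting_lemma}) and the basic-degree lemma for the dynamic blocks; your additional reductions (freezing the stationary block at $\lambda_0$, taking $\Theta$ independent of $u_0$, and passing to the annular domain $\mathscr D$) make explicit what the paper leaves implicit. Two steps need slightly more care. First, the complement of $E$ is not $\mathscr A(\lambda)$-invariant for $\lambda\neq\lambda_0$, so you should block-diagonalize first using the $\lambda$-dependent Riesz projections. Second, the passage from $\mathscr O_+$ to $\mathscr D$ is not pure excision, because the nontrivial zeros lie over $\lambda_0$, in the hole of the annulus; you must first homotope $\Theta$ to be positive on a smaller concentric subcylinder, after which all zeros lie at $u=0$ inside the annulus.
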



\begin{thebibliography}{00}

\bibitem{AtiyahPatodiSinger}
M.~F. Atiyah, V.~K. Patodi, and I.~M. Singer,
\emph{Spectral asymmetry and Riemannian geometry. III},
Math. Proc. Cambridge Philos. Soc., \textbf{79} (1976), 71--99.

\bibitem{AED}
Z.~Balanov, W.~Krawcewicz, and H.~Steinlein,
\emph{Applied Equivariant Degree},
AIMS Series on Differential Equations \& Dynamical Systems, Vol.~1, 2006.

\bibitem{book-new} Z. Balanov, W. Krawcewicz,  D. Rachinskii, J. Yu, H-P. Wu, \emph{Degree Theory and Symmetric Equations Assisted by GAP System: With a Special Focus on Systems with Hysteresis}, AMS Mathematical Surveys and Monographs, Vol. 286 (2025).

\bibitem{survey}
Z.~Balanov, W.~Krawcewicz, S.~Rybicki, and H.~Steinlein, \emph{A short treatise on the equivariant degree theory and its applications},
J. Fixed Point Theory Appl., \textbf{8} (2010), 1--74.

\bibitem{BalanovKrawcewiczRuan2006}
Z.~Balanov, W.~Krawcewicz, and H.~Ruan,
\emph{Hopf bifurcation in a symmetric configuration of lossless transmission lines},
Nonlinear Anal., \textbf{64} (2006), 1144--1166.

\bibitem{Dylawerski1991}
G.~Dylawerski, K.~G\c{e}ba, J.~Jodel, and W.~Marzantowicz, \emph{An $S^1$-equivariant degree and the Fuller index},
Ann. Polon. Math., \textbf{52} (1991), 243--280.

\bibitem{Dylawerski}
G.~Dylawerski,
\emph{An $S^1$-degree and $S^1$-maps between representation spheres}, Equivariant Homotopy and Cohomology Theory, Contemp. Math., Amer. Math. Soc., 1988.

\bibitem{Erbe1992}
L.~H. Erbe, W.~Krawcewicz, and J.~Wu,
\emph{Leray--Schauder degree for semilinear Fredholm maps and periodic boundary value problems of neutral equations}, Nonlinear Anal., \textbf{15} (1990), 747--764.

\bibitem{Fuller1967}
F.~B. Fuller,
\emph{An index of fixed point type for periodic orbits}, Amer. J. Math., \textbf{89} (1967), 133--148.

\bibitem{FuraRatajczakRybicki}
H.~Fura, A.~Ratajczak, and S.~Rybicki,
\emph{Existence and continuation of periodic solutions of autonomous Newtonian systems},
J. Differential Equations, \textbf{218} (2005), 216--252.

\bibitem{GGK_NODEA}
C. Garcia-Azpeitia, Z. Ghanem, and W. Krawcewicz,
\emph{Global Bifurcation of Spiral Wave Solutions to the Complex Ginzburg-Landau Equation}, Nonlinear Differential Equations and Applications, (2025).

\bibitem{GaGhKr}
\emph{C.~Garc\'{\i}a-Azpeitia, Z.~Ghanem, and W.~Krawcewicz, Global bifurcation in symmetric systems of nonlinear wave equations},
J. Differential Equations, (2025).

\bibitem{Geba1994}
K.~G\c{e}ba, W.~Krawcewicz, and J.~Wu,
\emph{An equivariant degree with applications to symmetric bifurcation problems, Part~1: Construction of the degree}, Proc. London Math. Soc. \textbf{69} (1994), 377--398.

\bibitem{Ize1992}
J.~Ize, I.~Massab\`o, and A.~Vignoli,
\emph{Degree theory for equivariant maps, the general $S^1$-action}, Mem. Amer. Math. Soc., \textbf{100} (1992), no.~481.

\bibitem{Ize1989}
J.~Ize, I.~Massab\`o, and A.~Vignoli,
\emph{Degree theory for equivariant maps}, I,
Trans. Amer. Math. Soc., \textbf{315} (1989), 433--510.

\bibitem{IhrigGolubitsky1984}
E.~Ihrig and M.~Golubitsky,
\emph{Pattern selection with $O(3)$ symmetry},
Physica D: Nonlinear Phenomena, \textbf{13}(1-2) (1984), 1--33.

\bibitem{Izydorek2021}
M. Izydorek, J. Janczewska, and N. Waterstraat, \emph{The equivariant spectral flow and bifurcation of periodic solutions of Hamiltonian systems}, Nonlinear Analysis, Vol. 211 (2021), 112475.

\bibitem{Krasnoselskii}
M.~A. Krasnosel'skii,
\emph{Topological Methods in the Theory of Nonlinear Integral Equations},
Pergamon Press, Oxford, 1964.

\bibitem{KrawcewiczWuBook}
W.~Krawcewicz and J.~Wu,
\emph{Theory of Degrees with Applications to Bifurcations and Differential Equations},
CMS Series of Monographs, Wiley, New York, 1997.

\bibitem{LeraySchauder}
J.~Leray and J.~Schauder,
\emph{Topologie et \'equations fonctionnelles},
Ann. Sci. \'Ecole Norm. Sup., \textbf{51} (1934), 45--78.


\bibitem{Rabinowitz}
P.~H. Rabinowitz,
\emph{Some global results for nonlinear eigenvalue problems},
J. Funct. Anal. \textbf{7} (1971), 487--513.

\bibitem{GAP} H-P. Wu, GAP system package EquiDeg for the computations of the Burnside ring and the equivariant
degree, https://github.com/psistwu/equideg.
\end{thebibliography}
\end{document}